\newcommand{\co}{\nobreak\mskip2mu\mathpunct{}\nonscript
  \mkern-\thinmuskip{:}\penalty300\mskip6muplus1mu\relax}
\renewcommand{\th}{^{\text{th}}}
\newcommand{\OneHalf}{{\textstyle\frac{1}{2}}}
\newcommand{\lsub}[2]{{}_{#1}#2}
\newcommand{\lsup}[2]{{}^{#1}\mskip-.6\thinmuskip#2}
\def\mathcenter#1{%
  \vcenter{\hbox{$#1$}}%
}
\newcommand{\ZZ}{\mathbb{Z}}
\newcommand{\RR}{\mathbb{R}}
\newcommand{\QQ}{\mathbb{Q}}
\newcommand{\Field}{\mathbf{k}}
\newcommand{\Ftwo}{\mathbb{F}_2}
\newcommand{\op}{\mathrm{op}}
\newcommand{\Ainf}{A_\infty}
\newcommand{\bdy}{\partial}
\newcommand{\dg}{\textit{dg }}
\newcommand{\Id}{\mathit{Id}}
\DeclareMathOperator{\Mor}{Mor}
\DeclareMathOperator{\Hom}{Hom}
\DeclareMathOperator{\rank}{rank}
\newcommand{\into}{\hookrightarrow}
\newcommand{\bD}{\mathbb{D}}
\newcommand{\HF}{\mathit{HF}}
\newcommand{\HFa}{\widehat{\mathit{HF}}}
\newcommand{\CF}{{\mathit{CF}}}
\newcommand{\CFa}{\widehat{\mathit{CF}}}
\newcommand{\CFK}{\mathit{CFK}}
\newcommand{\CFKa}{\widehat{\CFK}}
\newcommand{\tHF}{\underline{\HF}}
\newcommand{\tHFa}{\widehat{\tHF}}
\newcommand{\tCF}{\underline{\CF}}
\newcommand{\tCFa}{\widehat{\tCF}}
\newcommand{\Alg}{\mathcal{A}}
\newcommand{\Idem}{\mathcal{I}}
\newcommand{\CFD}{\mathit{CFD}}
\newcommand{\CFDD}{\mathit{CFDD}}
\newcommand{\CFA}{\mathit{CFA}}
\newcommand{\CFDA}{\mathit{CFDA}}
\newcommand{\CFDAa}{\widehat{\CFDA}}
\newcommand{\CFAA}{\mathit{CFAA}}
\newcommand{\CFAAa}{\widehat{\CFAA}}
\newcommand{\CFDa}{\widehat{\CFD}}
\newcommand{\CFAa}{\widehat{\CFA}}
\newcommand{\tCFDa}{\underline{\CFDa}}
\newcommand{\tCFAa}{\underline{\CFAa}}
\newcommand{\CFDDa}{\widehat{\CFDD}}
\newcommand{\DD}{$\mathit{DD}$}
\newcommand{\DA}{$\mathit{DA}$}
\newcommand{\DT}{\boxtimes}
\newcommand{\BSD}{\widehat{\mathit{BSD}}}
\newcommand{\BSA}{\widehat{\mathit{BSA}}}
\newcommand{\BSDA}{\widehat{\mathit{BSDA}}}
\newcommand{\BSDD}{\widehat{\mathit{BSDD}}}
\newcommand{\BSAA}{\widehat{\mathit{BSAA}}}
\newcommand{\DDid}{\mathit{DDId}}
\newcommand{\SFH}{\mathit{SFH}}
\newcommand{\SFC}{\mathit{SFC}}
\newcommand{\HD}{\mathcal{H}}
\newcommand{\alphas}{\boldsymbol{\alpha}}
\newcommand{\betas}{\boldsymbol{\beta}}
\newcommand{\x}{\mathbf{x}}
\newcommand{\y}{\mathbf{y}}
\newcommand{\z}{\mathbf{z}}
\newcommand{\HB}{H}
\newcommand{\SpinC}{\mathrm{spin}^c}
\newcommand{\PMC}{\mathcal{Z}}
\newcommand{\AZ}{\mathsf{AZ}}
\newcommand{\TW}{\mathcal{TW}} 
\newcommand{\TC}{\mathsf{TC}}
\newcommand{\cM}{\mathcal{M}}
\newcommand{\ModCat}{\mathsf{Mod}}
\newcommand{\nbd}{\mathrm{nbd}}
\theoremstyle{plain}
\numberwithin{equation}{section}
\newtheorem{theorem}{Theorem}[section]
\newtheorem{proposition}[theorem]{Proposition}
\newtheorem{lemma}[theorem]{Lemma}
\newtheorem{corollary}[theorem]{Corollary}
\newtheorem{convention}[theorem]{Convention}
\newtheorem{definition}[theorem]{Definition}
\newtheorem{construction}[theorem]{Construction}
\theoremstyle{definition}
\theoremstyle{remark}
\newtheorem{remark}[theorem]{Remark}
\definecolor{darkgreen}{rgb}{0,.25,0}
\definecolor{darkred}{rgb}{.25,0,0}
\definecolor{pink}{rgb}{1,.08,.575}
\providecommand\@dotsep{5}
\def\listtodoname{List of Todos}
\def\listoftodos{\@starttoc{tdo}\listtodoname}
\newcommand{\bLambda}{\mathbf{\Lambda}} 
\newcommand{\bFrac}{\mathit{FoF}} 
\newcommand{\btau}{\boldsymbol{\tau}} 
\newcommand{\nobc}{%
  \ooalign{\hidewidth \scriptsize$\mathrm{bc}$\hidewidth\cr\rule[.3ex]{2ex}{1pt}}}
\newcommand{\inobc}{i_{\nobc}}
\newcommand{\ibc}{i_{\mathrm{bc}}}
\newcommand{\cupplus}{{\setbox0\hbox{\large$\cup$}\rlap{\hbox to \wd0{\hss\raisebox{3pt}{\tiny$+$}\hss}}\box0}}
\newcommand{\cupminus}{{\setbox0\hbox{\large$\cup$}\rlap{\hbox to \wd0{\hss\raisebox{3pt}{\tiny$-$}\hss}}\box0}}
\begin{document}
\title{Bordered Floer homology and incompressible surfaces}

\author{Akram Alishahi}
\address{Department of Mathematics, Columbia University, New York, NY 10027}
\email{\href{mailto:alishahi@math.columbia.edu}{alishahi@math.columbia.edu}}

\author{Robert Lipshitz}
\thanks{RL was supported by NSF Grant DMS-1642067.}
\address{Department of Mathematics, University of Oregon, Eugene, OR 97403}
\email{\href{mailto:lipshitz@uoregon.edu}{lipshitz@uoregon.edu}}

\keywords{}

\date{\today}

\begin{abstract}
We show that bordered Heegaard Floer homology detects homologically essential
compressing disks and bordered-sutured Floer homology detects partly
boundary parallel tangles and bridges, in natural ways. For example,
there is a bimodule $\bLambda$ so that the tensor product of
$\CFDa(Y)$ and $\bLambda$ is $\Hom$-orthogonal to $\CFDa(Y)$ if and
only if the boundary of $Y$ admits a homologically essential
compressing disk. In the process, we sharpen a nonvanishing result of
Ni's. We also extend Lipshitz-Ozsv\'ath-Thurston's ``factoring''
algorithm for computing $\HFa$~\cite{LOT4} to compute bordered-sutured
Floer homology, to make both results on detecting essential
incompressibility practical. In particular, this makes computing Zarev's tangle invariant manifestly combinatorial.
\end{abstract}

\maketitle

\tableofcontents

\newcommand{\cY}{\mathcal{Y}}
\section{Introduction}\label{sec:intro}
Around the turn of the century, Ozsv\'ath and Szab\'o introduced a new family of 3-manifold invariants, now called \emph{Heegaard Floer homology}~\cite{OS04:HolomorphicDisks}.  These invariants, which are isomorphic to Seiberg-Witten Floer homology~\cite{Taubes10:ECH-SW1,Taubes10:ECH-SW2,Taubes10:ECH-SW3,Taubes10:ECH-SW4,Taubes10:ECH-SW5,KutluhanLeeTaubes:HFHMI,KutluhanLeeTaubes:HFHMII,KutluhanLeeTaubes:HFHMIII,KutluhanLeeTaubes:HFHMIV,KutluhanLeeTaubes:HFHMV,ColinGhigginiHonda11:HF-ECH-1,ColinGhigginiHonda11:HF-ECH-2,ColinGhigginiHonda11:HF-ECH-3}, have led to remarkable new theorems in 3-manifold topology. Instrumental in their success is some of the geometric information they are known to carry: Heegaard Floer homology detects the Thurston norm on the homology of 3-manifolds~\cite{OS04:ThurstonNorm} and whether 3-manifolds fiber over the circle with fiber in a given homology class~\cite{Ni09:FiberedMfld}. The goal of this paper is to develop some more, albeit related, geometric properties detected by Heegaard Floer homology.

While studying the Cosmetic Surgery problem, Ni proved a non-vanishing theorem for Heegaard Floer homology twisted by a closed $2$-form $\omega$~\cite{Ni:spheres}; this is quoted as Theorem~\ref{Ni:non-vanish} below. (See also~\cite[Theorem 2.2]{HeddenNi10:small},~\cite[Corollary 5.2]{HeddenNi13:detects}.) Using a little homological algebra, we strengthen Ni's result to prove:
\begin{theorem}\label{thm:detect-S2}Fix a closed, oriented
  $3$-manifold  $Y$ and a closed $2$-form $\omega\in\Omega^2(Y)$.
  Then $\tHFa(Y;\Lambda_\omega)=0$ if and only if $Y$ contains a $2$-sphere $S$ so that $\int_{S}\omega\neq 0$.
\end{theorem}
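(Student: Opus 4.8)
The plan is to reduce the theorem to Ni's non-vanishing result (Theorem~\ref{Ni:non-vanish}) by a short homological-algebra argument interpolating between the $\HFa$ and $\HF^+$ flavors, together with a standard computation for the reducible case. Precisely, I would establish that $\tHFa(Y;\Lambda_\omega)=0$ if and only if $\tHF^+(Y;\Lambda_\omega)=0$ (pure homological algebra, valid with twisted coefficients), and that $\tHF^+(Y;\Lambda_\omega)=0$ if and only if $Y$ contains a $2$-sphere $S$ with $\int_S\omega\neq0$ (one implication is Ni's theorem, the other a direct computation), and then compose these.

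For the first equivalence, the standard long exact sequence relating $\tHFa$ and $\tHF^+$ induced by the $U$-action — coming from the short exact sequence of twisted complexes $0\to\tCFa(Y;\Lambda_\omega)\to\tCF^+(Y;\Lambda_\omega)\xrightarrow{U}\tCF^+(Y;\Lambda_\omega)\to0$ — shows that $\tHFa(Y;\Lambda_\omega)=0$ precisely when $U$ acts invertibly on $\tHF^+(Y;\Lambda_\omega)$. It then suffices to see that an invertible $U$ forces $\tHF^+(Y;\Lambda_\omega)=\bigoplus_{\spinc}\tHF^+(Y,\spinc;\Lambda_\omega)$ to vanish. For $\spinc$ with $c_1(\spinc)$ torsion, $\tCF^+(Y,\spinc;\Lambda_\omega)$ is free over $\Lambda_\omega$ on generators whose $\QQ$-gradings are bounded below while $U$ lowers the grading by $2$, so $U$ cannot be injective on a nonzero homology group. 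For $\spinc$ with $c_1(\spinc)$ non-torsion, a standard adjunction argument (which goes through with twisted coefficients) gives $\tHF^\infty(Y,\spinc;\Lambda_\omega)=0$, whence $\tHF^+(Y,\spinc;\Lambda_\omega)$ is finite-dimensional over $\Lambda_\omega$ with $U$ nilpotent, and again invertibility of $U$ forces it to vanish. Combining this with Ni's theorem in contrapositive form — if $Y$ has no $2$-sphere $S$ with $\int_S\omega\neq0$ then $\tHF^+(Y;\Lambda_\omega)\neq0$ — yields the implication that $\tHFa(Y;\Lambda_\omega)=0$ forces $Y$ to contain such a sphere.

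For the converse implication, suppose $S\subset Y$ is a $2$-sphere with $\int_S\omega\neq0$. Because $\omega$ is closed, any separating sphere is null-homologous and has vanishing $\omega$-period, so $S$ must be non-separating; cutting $Y$ along $S$ and capping the two new $S^2$ boundary components with balls exhibits a diffeomorphism $Y\cong Y_0\#(S^1\times S^2)$ under which $[\omega]$ restricts nontrivially to the $S^2$ factor. By the connected-sum (Künneth) formula with twisted coefficients, $\tHFa(Y;\Lambda_\omega)$ is a tensor product over $\Lambda_\omega$ of the corresponding invariants of $Y_0$ and of $S^1\times S^2$; the genus-one Heegaard diagram presents the $S^1\times S^2$ factor as the homology of a two-step complex $\Lambda_\omega\xrightarrow{1-\zeta}\Lambda_\omega$ in which $\zeta\neq1$ records the $\omega$-period of the $S^2$, so that factor is acyclic (as $1-\zeta$ is a unit in $\Lambda_\omega$) and $\tHFa(Y;\Lambda_\omega)=0$. (If Ni's theorem is already known as an equivalence for $\HF^+$, this implication is instead immediate from the first equivalence above.)

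The substantive input is Ni's theorem; the remaining work is ``a little homological algebra,'' as advertised. The main points requiring care — and where I expect what friction there is to lie — are (i) checking that the structural facts used in the $\spinc$-by-$\spinc$ step, namely boundedness below of $\tHF^+$ for torsion $\spinc$ and vanishing of $\tHF^\infty$ for non-torsion $\spinc$, genuinely survive the passage to $\Lambda_\omega$-coefficients, and (ii) matching the coefficient conventions so that Ni's statement applies verbatim and, in particular, so that $1-\zeta$ is indeed a unit in $\Lambda_\omega$ whenever $\int_{S^2}\omega\neq0$.
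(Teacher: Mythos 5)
Your ``if'' direction (a sphere with $\int_S\omega\neq 0$ forces $\tHFa(Y;\Lambda_\omega)=0$) is essentially the argument in the paper: exhibit an $S^1\times S^2$ connected summand with $\omega$-period nonzero on the $S^2$, compute it to be acyclic, and invoke the twisted K\"unneth formula.

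The ``only if'' direction, however, has a genuine gap. You propose to use Ni's theorem ``in contrapositive form: if $Y$ has no $2$-sphere $S$ with $\int_S\omega\neq0$ then $\tHF^+(Y;\Lambda_\omega)\neq 0$.'' But Ni's theorem (Theorem~\ref{Ni:non-vanish} as quoted) does not say this. It says that, under the hypotheses, there \emph{exists} a nonempty open set $U$ of cohomology classes so that for $\omega\in U$ the twisted Floer homology is nonzero. There is no guarantee that the specific $\omega$ you were handed lies in $U$. This is precisely the issue the paper has to address, and the reason the theorem is described as a \emph{strengthening} of Ni's result rather than a restatement. The paper closes the gap with the Addington lemma and Corollary~\ref{HFdim-Novikov}: if $\omega$ is \emph{generic} (meaning $e_\omega\colon H_2(Y)\to\RR$ is injective) then for \emph{every} other closed $2$-form $\omega'$ one has $\dim_{\Lambda_{\omega'}}\tHFa(Y;\Lambda_{\omega'})\ge\dim_{\Lambda_\omega}\tHFa(Y;\Lambda_\omega)$. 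Since the open set $U$ from Corollary~\ref{cor:Ni-nonvanish} contains a generic form, nonvanishing for some $\omega\in U$ propagates to nonvanishing for the given $\omega_0$. Without some substitute for this semicontinuity-of-rank argument, your proof doesn't reach the stated conclusion.

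A secondary comment: the detour through $\tHF^+$ and invertibility of $U$ is not needed once you have the rank inequality, and it introduces its own verification burdens. In particular, the claim that $\tHF^\infty(Y,\spinc;\Lambda_\omega)=0$ for nontorsion $\spinc$ ``by a standard adjunction argument'' and the boundedness-below of $\tHF^+$ for torsion $\spinc$ with $\Lambda_\omega$-coefficients are plausible but are exactly the kind of twisted-coefficient assertions one should not wave at; the paper avoids them entirely by staying with $\tHFa$, the K\"unneth theorem, and the determinantal rank comparison. If you want to keep the $\tHF^+$ route you would still need to insert the $\omega$-genericity step somewhere, so it saves nothing.
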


By contrast, in the untwisted case, results of
Ozsv\'ath-Szab\'o~\cite[Proposition 5.1]{OS04:HolDiskProperties} and
Hedden-Ni~\cite[Theorem 2.2]{HeddenNi10:small} imply that Heegaard
Floer homology is always non-vanishing:

\begin{theorem}\label{thm:HF-nonvanish}
  For any closed, oriented $3$-manifold $Y$ we have $\HFa(Y)\neq 0$.
\end{theorem}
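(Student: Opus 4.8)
The plan is to work inside one conveniently chosen $\SpinC$ structure and transport nonvanishing up the tower $\HF^\infty\to\HF^+\to\HFa$. Every closed oriented $3$-manifold is parallelizable, hence carries a $\Spin$ structure and so a $\SpinC$ structure $\spinc$ with $c_1(\spinc)=0$; in particular $\spinc$ is torsion, so $\HF^+(Y,\spinc)$ carries an absolute $\mathbb Q$-grading that is bounded below. First I would note it suffices to prove $\HF^+(Y,\spinc)\neq 0$: if $\HF^+(Y,\spinc)\neq 0$, then picking a minimal grading in which it is nonzero and using that $U$ drops grading by $2$, the map $U\colon\HF^+(Y,\spinc)\to\HF^+(Y,\spinc)$ is not injective, and the long exact sequence of $0\to\CFa(Y,\spinc)\to\CF^+(Y,\spinc)\xrightarrow{U}\CF^+(Y,\spinc)\to 0$ then embeds $\ker U$ into $\HFa(Y,\spinc)$, so $\HFa(Y,\spinc)\neq 0$ and therefore $\HFa(Y)\neq 0$. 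In turn, $\HF^+(Y,\spinc)\neq 0$ follows from $\HF^\infty(Y,\spinc)\neq 0$, since the canonical map $\HF^\infty(Y,\spinc)\to\HF^+(Y,\spinc)$ is an isomorphism in all sufficiently high gradings.

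So the whole statement is reduced to the nonvanishing of $\HF^\infty(Y,\spinc)$ for a torsion $\SpinC$ structure. If $b_1(Y)\le 2$ this is classical and elementary: $\HF^\infty(Y,\spinc)$ is then standard, $\cong\Ftwo[U,U^{-1}]^{\oplus 2^{b_1(Y)}}\neq 0$, and when $b_1(Y)=0$ one can moreover shortcut the argument entirely via $\chi(\HFa(Y,\spinc))=\pm 1$. For general $b_1(Y)$ one genuinely needs geometric input. One route imports Ozsv\'ath--Szab\'o's computation of the fully twisted group $\tHF^\infty(Y,\spinc)$ together with Hedden--Ni's refinement, which between them yield nonvanishing of the untwisted $\HF^\infty(Y,\spinc)$. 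A second route, closer to Hedden--Ni's own argument, avoids $\HF^\infty$ altogether: use the connected-sum formula $\HFa(Y_1\#Y_2)\cong\HFa(Y_1)\otimes_{\Ftwo}\HFa(Y_2)$ to reduce to prime $Y$, dispatch the prime $S^1\times S^2$ by the computation $\HFa(S^1\times S^2)\cong\Ftwo^2$, and for irreducible $Y$ invoke nonvanishing of $\SFH$ of the taut sutured manifold obtained by removing an open ball from $Y$ — proved by running Juh\'asz's surface decomposition formula along a taut sutured hierarchy (Gabai) down to a product sutured manifold — together with the identification of that $\SFH$ with $\HFa(Y)$.

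The main obstacle is the irreducible case with $b_1(Y)>0$: no purely formal manipulation produces a nonzero class there. In particular the tempting strategy of inducting along a surgery presentation of $Y$ out of $\HFa(S^3)=\Ftwo$ fails, because the surgery exact triangle only propagates \emph{vanishing} — two of the three terms zero forces the third to be zero — never the reverse. The genuine content is therefore geometric, namely non-triviality of $\HF^\infty$ in a torsion $\SpinC$ structure, equivalently non-triviality of the sutured Floer homology at the bottom of a taut sutured hierarchy; this is exactly what the cited results of Ozsv\'ath--Szab\'o and Hedden--Ni supply, and granting it the remaining ingredients (the $U$-action long exact sequence, the connected-sum formula, the computation for $S^1\times S^2$, and the Euler-characteristic argument for rational homology spheres) are routine.
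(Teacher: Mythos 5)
Your proposal is correct, and your ``second route'' is essentially the paper's proof; the paper uses the connected-sum formula to reduce to prime pieces, handles $b_1=0$ via $\chi(\HFa(Y))\neq 0$, and for irreducible $Y$ with $b_1>0$ invokes Hedden--Ni's Theorem~2.2 applied to a Thurston-norm-minimizing surface $F$ (rather than the sutured Floer homology of $Y\setminus B^3$, though the two inputs are interchangeable here, and the $\SFH$ version you propose uses only Theorems~\ref{thm:vanishSFH} and~\ref{thm:nonvanishSFH}, which the paper already cites). Your first route, reducing to nonvanishing of $\HF^\infty(Y,\spinc)$ for torsion $\spinc$ and transporting up the tower, is a genuinely different and equally valid approach; it has the virtue of isolating the geometric input in a single algebraic nonvanishing statement rather than in a taut surface, at the cost of requiring a somewhat nontrivial computation of $\HF^\infty$ in high $b_1$, for which the precise reference is really Lidman's identification of $\HF^\infty(Y,\spinc;\Ftwo)$ with the cup homology of $Y$ (building on Ozsv\'ath--Szab\'o's computation of the fully twisted group and T.~Mark's work on cup homology) rather than Hedden--Ni per se.

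Two small corrections. First, the long exact sequence of $0\to\CFa\to\CF^+\xrightarrow{U}\CF^+\to 0$ does not embed $\ker U$ into $\HFa$; rather it identifies $\ker\bigl(U\colon\HF^+\to\HF^+\bigr)$ with the image of $\HFa\to\HF^+$, so $\ker U$ is a \emph{quotient} of $\HFa$. The conclusion $\HFa\neq 0$ is unaffected, but the direction of the map is reversed from what you wrote. Second, you correctly note that $S^1\times S^2$ must be dispatched separately since it is prime but not irreducible; the paper's statement ``split $Y$ such that every $Y_i$ is irent irreducible'' glosses over this point (though the fix is the direct computation $\HFa(S^1\times S^2)\cong\Ftwo^2$, exactly as you observe). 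Your diagnosis that the surgery exact triangle only propagates vanishing, not nonvanishing, and hence cannot replace the geometric input, is an accurate and useful observation.
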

(This result is well-known to experts, but we do not know a
reference. An extension to link complements is Theorem~\ref{thm:SFH-nonvanishing}, below.)

Recall that a \emph{compressing disk} for a closed surface $\Sigma$ in
a $3$-manifold $Y$ is an embedded, closed disk $D$ in $Y$ such that
$\bdy D=D\cap \Sigma$ is an essential curve in $\Sigma$. A compressing
disk $D$ is \emph{homologically essential} if $0\neq [\bdy D]\in
H_1(\Sigma)$. (See also Remark~\ref{rem:weak-compress}.) If no
homologically essential compressing disk exists then we say $\bdy Y$
is \emph{essentially incompressible}.

Let $Y$ be a $3$-manifold with boundary. Using
Theorem~\ref{thm:detect-S2} we show that bordered Floer homology
detects whether $\bdy Y$ is essentially incompressible:
\begin{theorem}\label{thm:detect-incompress}
  Fix any pointed matched circle $\PMC$ representing a surface of genus $k$. There is a type \DA\ bimodule $\lsup{\Alg(\PMC)}\bLambda(\PMC)_{\Alg(\PMC)}$ with the following property.
  Suppose $\bdy Y$ is a surface of genus $k$ and choose any parametrization
  $\phi\co F(-\PMC)\to\bdy Y$. Then
  \begin{equation}\label{eq:detect-incompres}
    H_*\Mor_{\Alg(\PMC)}\bigl(\lsup{\Alg(\PMC)}\CFDa(Y,\phi),\lsup{\Alg(\PMC)}\bLambda(\PMC)_{\Alg(\PMC)}\DT\lsup{\Alg(\PMC)}\CFDa(Y,\phi)\bigr)=0
  \end{equation}
  if and only if $\bdy Y$ has a homologically essential compressing disk.
\end{theorem}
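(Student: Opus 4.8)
The plan is to reduce the bordered statement to the twisted-coefficients statement of Theorem~\ref{thm:detect-S2} by gluing. First I would construct the bimodule $\bLambda(\PMC)$. The natural candidate is the type \DA\ bimodule associated to the ``doubling'' or capping-off cobordism: concretely, take a handlebody $U_\PMC$ with $\bdy U_\PMC = F(\PMC)$ built from the pointed matched circle by attaching $2$-handles along the matched pairs, and let $\bLambda(\PMC)$ be (a version of) $\CFDA$ of $U_\PMC$ thickened, i.e.\ the identity-type \DA\ bimodule tensored so that $\bLambda(\PMC)\DT\CFDa(Y,\phi)$ computes $\CFDa$ of $Y$ with a compressing handle glued on, or rather $\CFa$ of the closed manifold $Y\cup_\phi U_\PMC$ with an extra ``twisting'' region inserted. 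The key point is to arrange the bimodule to carry the $2$-form data: one takes the twisted module $\tCFDa$ or inserts a formal variable tracking intersection with the belt sphere of the glued handle, so that the pairing $\Mor(\CFDa(Y,\phi), \bLambda\DT\CFDa(Y,\phi))$ computes, via the pairing theorem, the twisted Heegaard Floer homology $\tHFa(Y\cup_\phi U_\PMC; \Lambda_\omega)$ for the $2$-form $\omega$ Poincar\'e dual to the union of the belt spheres of the attached $2$-handles.

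Second, I would invoke the pairing theorem (and its $\Mor$-space variant, $H_*\Mor_{\Alg}(\CFDa(Y_1), M\DT\CFDa(Y_1)) \cong \HF$ of the relevant glued-up object) to identify the left-hand side of~\eqref{eq:detect-incompres} with $\tHFa(Y'; \Lambda_\omega)$, where $Y' = Y\cup_\phi U_\PMC$ and $\omega$ is the explicit $2$-form above. This is where the careful bookkeeping of the twisting coefficients in $\bLambda(\PMC)$ must line up exactly with the $\Lambda_\omega$ in Theorem~\ref{thm:detect-S2}; the $\Mor$-orthogonality condition ``$=0$'' translates precisely to ``$\tHFa(Y';\Lambda_\omega) = 0$''.

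Third, apply Theorem~\ref{thm:detect-S2}: $\tHFa(Y';\Lambda_\omega)=0$ iff $Y'$ contains a $2$-sphere $S$ with $\int_S \omega \neq 0$. It then remains to translate this sphere condition back into a statement about $\bdy Y$. A sphere $S\subset Y' = Y\cup_\phi U_\PMC$ with $\int_S\omega\neq 0$ must intersect one of the belt spheres of the attached $2$-handles an odd (in particular nonzero) number of times; an innermost-disk / irreducibility argument on the cocores of the $2$-handles lets one surger $S$ so that it meets exactly one cocore disk in a single point, whence cutting along that cocore produces a compressing disk $D$ for $\bdy Y$ with $[\bdy D]$ dual to the generator of $H_1(F(\PMC))$ detected by $\omega$, hence homologically essential. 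Conversely, a homologically essential compressing disk $D$ for $\bdy Y$ caps off with the corresponding cocore to yield an embedded sphere in $Y'$ meeting that belt sphere once, so $\int_S\omega = \pm 1\neq 0$.

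\smallskip

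The main obstacle I anticipate is the second step: setting up $\bLambda(\PMC)$ so that the twisting is recorded correctly. One has to choose which of the genus-$k$-many ``handle'' directions the $2$-form sees, and verify that the single bimodule $\bLambda(\PMC)$ — not depending on $Y$ or $\phi$ — simultaneously works for all homology classes of compressing disks, i.e.\ that vanishing of the full twisted group detects the existence of \emph{some} essential compressing disk. This likely forces $\bLambda(\PMC)$ to be built from a universal/total twisting (coefficients in a group ring on $H_1(F(\PMC))$, or a sum over the handle directions), and checking that the pairing theorem still applies with these coefficients, and that ``$\Mor$-orthogonal'' is the right vanishing condition, is the delicate part. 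The topological translation in step three is standard $3$-manifold topology (innermost disks, irreducibility of the glued handlebody region) and should be routine once the homological input is correctly identified.
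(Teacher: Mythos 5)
Your topological picture for $\bLambda(\PMC)$ is incompatible with the algebraic structure of the morphism complex, and this is not a bookkeeping issue but a structural one. By the duality $\CFAa(-Y)\simeq\CFDa(Y)^*$ and the pairing theorem, the complex
$\Mor_{\Alg(\PMC)}\bigl(\CFDa(Y,\phi),\,M\DT\CFDa(Y,\phi)\bigr)$
is $\CFAa(-Y,\phi)\DT M\DT\CFDa(Y,\phi)$, which computes $\CFa$ of a closed manifold of the form $(-Y)\cup_\bdy W\cup_\bdy Y$ for a cobordism $W$ from $F(\PMC)$ to $F(\PMC)$ with $M=\CFDAa(W)$. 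There is no way to produce $Y\cup_\phi U_\PMC$ from this shape: gluing a handlebody to $Y$ is a pairing of a single $\CFDa(Y)$ with a one-sided module, leaving no room for the second copy of $Y$ implicit in the $\Mor$. The correct closed manifold here is therefore the \emph{double} $(-Y)\cup_\bdy Y$, and $\bLambda(\PMC)$ must be the bimodule of (a twisted version of) the identity cobordism, not a capping-off module. The paper's $\bLambda(\PMC)$ is exactly such a rank-one "identity" bimodule whose $\delta^1_2$ inserts a Novikov monomial $T^{\psi([a])}$ recording the support of the algebra element, so that $\bLambda\DT\CFDa(Y)\cong\tCFDa(Y;\Lambda_\psi)$ and the $\Mor$-complex computes $\tCFa\bigl((-Y)\cup_\bdy Y;\Lambda_{[\psi]}\bigr)$; the translation from spheres in the double to compressing disks in $\bdy Y$ is then the content of Lemma~\ref{lem:disk-to-sphere}.

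Even if one tried to salvage the handlebody picture by a different pairing, it would detect the wrong class of disks: a compressing disk for $\bdy Y$ closes up to a sphere in $Y\cup_\phi U_\PMC$ only when $\bdy D$ bounds in $U_\PMC$, i.e.\ when $[\bdy D]$ lies in the Lagrangian of $H_1(F(\PMC))$ killed by that particular handlebody. This is strictly weaker than $[\bdy D]\neq 0$, so the theorem as stated would not follow. Finally, a small but telling sign of the confusion: the belt sphere of a $3$-dimensional $2$-handle $D^2\times D^1$ is $\{0\}\times S^0$, a pair of points, not a $2$-sphere, so "the $2$-form Poincar\'e dual to the union of the belt spheres" does not type-check in a closed $3$-manifold. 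The $2$-form $[\psi]$ in the paper is instead dual to a $1$-cycle in $(-Y)\cup_\bdy Y$, detecting algebraic intersection with $\bdy Y$.
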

One could abbreviate Equation~\ref{eq:detect-incompres} by saying that $\CFDa(Y)$ and $\bLambda(\PMC)\DT\CFDa(Y)$ are \emph{$\Hom$-orthogonal}. (The terminology is justified by the fact that the Euler characteristic of the morphism complex is a bilinear form on the Grothendieck group of \dg modules or type $D$ structures. In the case of bordered Floer homology, this bilinear form is closely related to the symplectic form on $H_1(F(\PMC))$~\cite{HLW17:chi}.)

Heegaard Floer homology seems not to detect connected sums, and
consequently it seems unlikely that bordered Floer homology detects
boundary sums.  This is the main reason for the restriction to homologically essential compressing disks in Theorem~\ref{thm:detect-incompress}. (See also Remark~\ref{rem:weak-compress}.)

\begin{convention}
  In the rest of this paper, unless otherwise noted, \emph{compressible} (respectively \emph{incompressible}) means essentially compressible (incompressible), i.e.,  possessing (not possessing) a homologically essential compressing disk.
\end{convention}

Theorem~\ref{thm:detect-incompress} gives a simple procedure to detect incompressibility of surfaces $\Sigma$ in closed $3$-manifolds $Y$, and incompressibility of Seifert surfaces $\Sigma$ for nontrivial knots, as long as $\Sigma$ is not an (embedded) connected sum. Specifically, let $Y'=Y\setminus\nbd(\Sigma)$. Then $\bdy Y'$ is incompressible if and only if $\Sigma$ is incompressible, so apply Theorem~\ref{thm:detect-incompress} to $Y'$.

\begin{remark}
  In the case that $b_1(Y)=g(\bdy Y)=1$, the fact that bordered Floer homology detects whether $\bdy Y$ is incompressible is immediate from a result of Gillespie's~\cite[Corollary 2.7]{Gillespie:Lspace}.
\end{remark}

We also prove a relative version of
Theorem~\ref{thm:detect-incompress}, using Zarev's bordered-sutured
Floer homology~\cite{Zarev09:BorSut}. A \emph{tangle} $(Y,T)$ is a
properly embedded $1$-manifold-with-boundary $T$ in compact, oriented
$3$-manifold $Y$ with connected boundary. Two tangles are equivalent if they are isotopic
fixing the boundary. For a tangle $(Y,T)$, an interval component $T_0$
of $T$ is called \emph{boundary parallel}, if it is isotopic, in
$Y\setminus (T\setminus T_0)$, to an arc in $\bdy Y$. The pair $(Y,T)$
is called \emph{partly boundary parallel} if it has an interval
component which is boundary parallel, and $(Y,T)$ is called \emph{boundary parallel} if $T$ has no closed component and all of its components are boundary parallel. In $B^3$, a tangle is boundary parallel if it consists of a collection of bridges.

\begin{convention}
A tangle is \emph{nullhomologous} if every component is trivial in
$H_1(Y,\bdy Y)$. From now on, tangle means nullhomologous tangle. We also require that any tangle under consideration intersect every component of $\bdy Y$.
\end{convention}
Note that $T$ being nullhomologous implies that every component of $T$
has both endpoints on the same component of $\bdy Y$. Since $T$ is
nullhomologous, for each interval component $T_0$ of $T$ there is an
arc $\gamma$ in $\bdy\nbd(T_0)$ with endpoints on $\bdy Y$, intersecting a merdian of $T_0$ once, and an arc
$\eta$ in $\bdy Y$ with $\bdy \eta=\bdy\gamma$ and
$[\gamma-\eta]=0\in H_1(Y\setminus T_0)$. Any such $\gamma$ is a
\emph{longitude} of $T_0$. Longitudes are not unique up to isotopy rel
boundary: one can twist a longitude around a meridian of $T_0$ to
obtain a new longitude.

\begin{construction}\label{con:tang-to-BS}
  Given a tangle $(Y,T)$, we can make $Y(T)=Y\setminus\nbd(T)$ into a
  bordered-sutured manifold (see Section~\ref{sec:borsut-background}) $\mathcal{Y}_\Gamma(T)$ by:
  \begin{itemize}
  \item declaring $\bdy Y\setminus\nbd(\bdy T)$ to be bordered boundary and $\bdy\nbd(T)$ to be sutured boundary,
  \item placing two meridional sutures around each closed component of
    $T$, 
  \item placing two longitudinal sutures running along each interval component
    of $T$, and 
  \item choosing a parametrization of the bordered boundary of $Y(T)$ by some arc diagram $-\PMC$ for a surface $F(-\PMC)$ with $2n$ boundary components.
  \end{itemize}
  See Figure~\ref{fig:TangToBorSut}. Let $\Gamma$ denote the set of sutures. Note that there are many equally valid choices of longitudinal sutures, so $\Gamma$ is not unique. The region $R_+$ (respectively $R_-$) consists of $n$ rectangles, one per arc of $T$, and $n$ annuli, one per closed component of $T$. Each of $S_+$ and $S_-$ consists of $2n$ arcs, one per component of $\bdy T$.

 Further, for any interval component $T_i$ of $T$, let $\Gamma_i$ be
 the set of sutures obtained from $\Gamma$ by replacing the
 longitudinal sutures for each interval component $T_j$  of
 $T\setminus T_i$ with sutures parallel to the boundary, so that $\bdy
 \Gamma_i=\bdy \Gamma$. For each $T_j$, the new sutures decompose the
 corresponding annulus component of $\bdy\nbd(T)$  into two disks
 (bigons) and an annulus. Again, $\Gamma_i$ is not unique: there are
 two choices of sutures for each $T_j$, $j\neq i$, depending on whether we choose to make $R_+$ or $R_-$ consist of bigons, and the same infinitude of options for $T_i$. Let $\mathcal{Y}_{\Gamma_i}(T)$ be the resulting bordered-sutured manifold for some choice of $\Gamma_i$.
  
\end{construction} 
Note that the tangle $T$ induces a pairing of the boundary components of $F(-\PMC)$.

\begin{figure}
  \centering
  \includegraphics{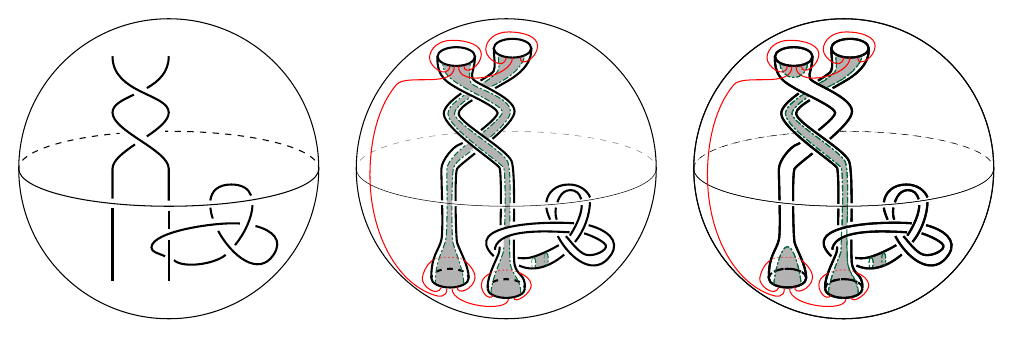}
  \caption{\textbf{From a tangle to a bordered-sutured manifold.} Left: a tangle. Center: the bordered-sutured manifold $\mathcal{Y}_\Gamma(T)$. The sutures $\Gamma$ are \textcolor{darkgreen}{thin, dash-dotted lines}, the region $R_-$ is shaded, and the arcs parameterizing the bordered boundary are \textcolor{red}{thin, solid lines}. The boundary of $\Gamma$ is $\gamma$ and the boundary of $R_-$ is $S_-$ (see Section~\ref{sec:borsut-background}). Right: the manifold $\mathcal{Y}_{\Gamma_1}(T)$.}
  \label{fig:TangToBorSut}
\end{figure}

\begin{theorem}\label{thm:detect-tang}
  For any arc diagram $-\PMC$ as above and pairing of the boundary components of $F(-\PMC)$
  there is a type \DA\ bimodule $\lsup{\Alg(\PMC)}\btau_{\Alg(\PMC)}$ with the following property.
  Given a null-homologous tangle $(Y,T)$, 
  \[
    H_*\Mor_{\Alg(\PMC)}\bigl(\lsup{\Alg(\PMC)}\BSD(\mathcal{Y}_{\Gamma}(T)),\lsup{\Alg(\PMC)}\btau_{\Alg(\PMC)}\DT\lsup{\Alg(\PMC)}\BSD(\mathcal{Y}_{\Gamma}(T))\bigr)=0
  \]
  if and only if $T$ is partly boundary parallel.
  
 Similarly, for any pair of boundary components of $F(-\PMC)$, for instance specified by a choice of interval component $T_i$ of $T$, there is a DA bimodule $\lsup{\Alg(\PMC)}\btau^i_{\Alg(\PMC)}$ so that 
   \begin{equation}\label{eq:bpcompo}
    H_*\Mor_{\Alg(\PMC)}\bigl(\lsup{\Alg(\PMC)}\BSD(\mathcal{Y}_{\Gamma_i}(T)),\lsup{\Alg(\PMC)}\btau^i_{\Alg(\PMC)}\DT\lsup{\Alg(\PMC)}\BSD(\mathcal{Y}_{\Gamma_i}(T))\bigr)=0
  \end{equation}
   if and only if $T_i$ is boundary parallel.
\end{theorem}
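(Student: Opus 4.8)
The plan is to establish Theorem~\ref{thm:detect-tang} as the bordered-sutured counterpart of Theorem~\ref{thm:detect-incompress}, following the same template: reduce, via a pairing theorem, to detecting a certain surface in a self-glued sutured manifold, and then recognize that surface as coming exactly from a boundary-parallel component. The first step is to build the bimodule. I would define $\btau$ (resp.\ $\btau^i$) as the type \DA\ bimodule that realizes ``identify the bordered boundary of a bordered-sutured manifold with its mirror along the given pairing of the boundary components of $F(-\PMC)$, with coefficients twisted by a universal $2$-form.'' Concretely: start from the untwisted gluing bimodule supplied by Zarev's pairing theorem~\cite{Zarev09:BorSut} for the indicated pairing, then tensor on a twisting by $\FF_2[H]$, where $H$ is free on one generator per interval component of $T$ for $\btau$, and on the single generator determined by the chosen pair of boundary components for $\btau^i$. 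Because twisting is local it commutes with $\DT$, so $\btau\DT\BSD(\cM)$ is the twisted $\BSD$ of $\cM$ and the expressions in the statement are defined.

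Second, I would invoke a morphism-complex, twisted version of Zarev's pairing theorem --- upgrading it the way Lipshitz--Ozsv\'ath--Thurston upgrade the closed pairing theorem in~\cite{LOT4}, or just using $\Mor_{\Alg(\PMC)}(M,N)\simeq\overline{M}\DT N$ --- to identify the homology in the theorem with the $\omega$-twisted sutured Floer homology of the sutured manifold $D\cM$ obtained by gluing $\cM=\mathcal{Y}_\Gamma(T)$ (resp.\ $\mathcal{Y}_{\Gamma_i}(T)$) to its mirror along the bordered boundary via the tangle's pairing; here $\omega$ is supported in a collar of the gluing region and dual to the curves encoded by $H$. Zarev's closing-up construction together with Theorem~\ref{thm:detect-S2} then reduces the whole statement to: $D\cM$ (or a closure of it) contains a $2$-sphere $S$ with $\int_S\omega\neq 0$ if and only if $T$ is partly boundary parallel, and likewise for $\btau^i$ and $T_i$.

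Third --- the geometric heart --- I would prove that topological equivalence. For the forward direction, suppose an interval component $T_0\subset T$ is boundary parallel, with parallelism disk $D$. Then $D$ meets $Y(T)=Y\setminus\nbd(T)$ in a disk $D'$, one boundary arc of which runs along the annulus $\bdy\nbd(T_0)$ between its two longitudinal sutures and the complementary arc $\alpha$ of which lies on the bordered boundary; gluing $D'$ to its mirror along $\alpha$ produces a $2$-sphere $S$ in $D\cM$, and the presence of the longitudinal sutures on $T_0$ is exactly what forces $\alpha$ to close up (rather than leaving a bigon), so $S$ has nonzero $\omega$-period and the displayed homology vanishes by step two. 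For the converse, if no interval component of $T$ is boundary parallel then, after the standard reductions and using that $T$ is nullhomologous, $Y(T)$ with these sutures is taut and irreducible, hence so is $D\cM$: any $\omega$-essential sphere in $D\cM$ would, by an innermost-circle argument applied to its intersection with the image of the bordered boundary, yield either an $\omega$-essential sphere inside the interior of $Y(T)$ (impossible, since $\omega$ is collar-supported) or a compressing disk exhibiting some component as boundary parallel (contrary to hypothesis); so Theorem~\ref{thm:SFH-nonvanishing}, combined with the twisting mechanism behind Theorem~\ref{thm:detect-S2}, gives nonvanishing. The $\btau^i$ statement runs identically, using that in $\mathcal{Y}_{\Gamma_i}(T)$ the boundary-parallel sutures cap the other annuli $\bdy\nbd(T_j)$ ($j\neq i$) off into disks, so the only $\omega$-essential sphere that can arise comes from a parallelism disk for $T_i$.

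The step I expect to be the main obstacle is the junction of steps two and three: arranging that $\btau$ corresponds, under a twisted morphism-complex form of Zarev's pairing theorem, to precisely the self-glued sutured manifold with precisely the twisting whose $\omega$-essential spheres are in bijection with the boundary-parallel interval components --- with no spurious spheres surviving. This requires (i) extending Zarev's bordered-sutured pairing theorem to the twisted, $\Mor$-level setting, and (ii) carrying out the innermost-disk analysis of $\omega$-essential spheres in $D\cM$ with enough care --- leaning on the nullhomologous hypothesis on $T$ and the specific choice of longitudinal sutures --- to exclude spheres not built from parallelism disks. Once those are in place, the conclusion follows by assembling Theorems~\ref{thm:detect-S2}, \ref{thm:HF-nonvanish} and~\ref{thm:SFH-nonvanishing} with the same homological algebra used for Theorem~\ref{thm:detect-incompress}.
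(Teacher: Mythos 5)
Your plan misconstrues what the bimodule $\btau$ is, and as a result your whole architecture diverges from the paper's; the parenthetical immediately after the theorem statement already tells you that ``the bimodule $\btau$ consists of boundary Dehn twists on half the components.'' You instead build $\btau$ as a twisted-coefficient (Novikov-flavored) bimodule, modeled on $\bLambda$ from Theorem~\ref{thm:detect-incompress}, and then try to rerun the $\omega$-essential-sphere argument from Theorem~\ref{thm:detect-S2}. That is genuinely not the route the paper takes, and it runs into real gaps. First, the doubled object $D(Y,T)$ is a \emph{sutured} manifold, not a closed one, so Theorem~\ref{thm:detect-S2} does not apply; you would need a twisted-coefficient analogue for sutured Floer homology that the paper never proves. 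Second, you would need a twisted, $\Mor$-level version of Zarev's pairing theorem, which is also not established here. Third, and most importantly, the mechanism you want twisting to detect (spheres with $\int_S\omega\neq0$) is not the geometric feature that controls vanishing in this setting; the controlling feature is a \emph{suture} bounding a disk, and the longitudinal sutures on $\bdy\nbd(T_0)$ are what make this work. You gesture at this (``the longitudinal sutures on $T_0$ is exactly what forces $\alpha$ to close up'') but then conclude ``so $S$ has nonzero $\omega$-period,'' which is a non sequitur: the relevant conclusion is that the suture is null-homotopic, not that some sphere has nonzero period.

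The paper's actual proof is shorter and entirely untwisted. Zarev's $\Mor$-pairing theorem (Theorem~\ref{thm:BS-hom-pair}) says the morphism complex computes $\SFC\bigl(m(\mathcal{Y}_\Gamma(T))\cup\TW_{F(-\PMC),-}\cup\mathcal{Y}_\Gamma(T)\bigr)$, and the negative twisting slice $\TW_{F(-\PMC),-}$ introduces a negative boundary Dehn twist on each doubled tangle component. Setting $\btau$ to be the corresponding product of positive boundary Dehn twists cancels this, giving Corollary~\ref{cor:doubling}: the morphism complex is $\SFC(D(Y,T))$ on the nose, with the standard longitudinal/meridional sutures. From there one splits off irreducible pieces via the connected-sum formula, and for an irreducible tangle applies the untwisted non-vanishing dichotomy of Theorem~\ref{thm:SFH-nonvanishing}: $\SFH(D(Y,T))=0$ iff some suture bounds a disk in $X(L)$. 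Because $L$ is nullhomologous, meridional sutures can't bound, so the bounding suture is longitudinal, i.e.\ $-T_0\cup_\bdy T_0$ bounds a disk in $X\setminus L$; an innermost-disk argument (which you do carry out correctly in spirit) identifies that with $T_0$ being boundary parallel. The $\btau^i$ case is the same after observing that $m(\mathcal{Y}_{\Gamma_i}(T))\cup\TW\cup\mathcal{Y}_{\Gamma_i}(T)$ is $D_i(Y,T)$ with one extra negative Dehn twist, which $\btau^i$ undoes. In short, you need no Novikov coefficients, no twisted pairing theorem, and no appeal to Theorem~\ref{thm:detect-S2} here; the Dehn-twist bimodule and Theorem~\ref{thm:SFH-nonvanishing} do all the work.
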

(The bimodule $\btau$ consists of a boundary Dehn twist around one end
of each interval component of $T$, while $\btau^i$ is the boundary
Dehn twist around one of the endpoints of $T_i$.)

\begin{corollary}
A tangle $(Y,T)$ with no closed components is boundary parallel if and only if Equation~\ref{eq:bpcompo} holds for all components of $T$.
\end{corollary}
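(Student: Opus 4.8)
The plan is to deduce this directly from the second half of Theorem~\ref{thm:detect-tang}, applied once for each interval component of $T$. First I would unwind the definitions. Since $T$ has no closed components, saying that $(Y,T)$ is boundary parallel means precisely that every (interval) component $T_i$ of $T$ is boundary parallel, i.e.\ isotopic in $Y\setminus(T\setminus T_i)$ to an arc in $\bdy Y$. No hidden simultaneity hypothesis is built into the definition, so "$(Y,T)$ is boundary parallel" and "$T_i$ is boundary parallel for every $i$" are literally the same statement. This is the step I would treat most carefully, since a priori one might expect to need to push all strands to the boundary at once; I would spell out that the definition only asks for each strand separately.

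Next, for each component $T_i$ I would invoke the second part of Theorem~\ref{thm:detect-tang}, taking the pair of boundary components of $F(-\PMC)$ to be the one specified by $T_i$. This produces a type \DA\ bimodule $\lsup{\Alg(\PMC)}\btau^i_{\Alg(\PMC)}$ for which Equation~\ref{eq:bpcompo} holds if and only if $T_i$ is boundary parallel. Because that theorem is stated for an arbitrary arc diagram $-\PMC$ parametrizing the bordered boundary and an arbitrary admissible choice of the auxiliary sutures $\Gamma_i$ in Construction~\ref{con:tang-to-BS} (the conclusion being insensitive to these choices), the single fixed parametrization used in the statement of the Corollary may be reused for every $i$; I would note this invariance explicitly so that the bimodules $\btau^i$ all live over the same algebra $\Alg(\PMC)$.

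Finally I would chain the equivalences: $(Y,T)$ is boundary parallel $\iff$ $T_i$ is boundary parallel for all $i$ $\iff$ Equation~\ref{eq:bpcompo} holds for all $i$. The only content beyond Theorem~\ref{thm:detect-tang} is the first equivalence, which is just the definition of a boundary parallel tangle, so there is no serious obstacle here; the work is entirely in matching definitions and in citing the naturality of $\BSD$ of $\mathcal{Y}_{\Gamma_i}(T)$ under the choices in Construction~\ref{con:tang-to-BS}.
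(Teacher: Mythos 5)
Your proof is correct and is precisely the intended argument: the paper states this corollary without proof because it is an immediate combination of the definition of boundary parallel (no closed components and every interval component separately boundary parallel) with the second half of Theorem~\ref{thm:detect-tang} applied to each $T_i$. Your observations about the definition containing no simultaneity hypothesis, and about reusing the single parametrization so that all the $\btau^i$ live over the same algebra, are exactly the right points to make explicit.
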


Finally, note that $\CFDa(Y,\phi_i)$ is algorithmically computable
from a Heegaard decomposition of $Y$~\cite{LOT4}. Essentially the same
algorithm also computes bordered-sutured Floer homology, as we explain
in Section~\ref{sec:comp}. The bimodule $\bLambda(\PMC)$ is described
below explicitly, but involves power series, so it is not immediately
obvious if Theorems~\ref{thm:detect-incompress}
and~\ref{thm:detect-tang} give effective criteria. Nonetheless, the
underlying techniques are effective, and Section~\ref{sec:comp} also gives computationally effective versions of Theorems~\ref{thm:detect-incompress} and~\ref{thm:detect-tang}.

This paper is organized as follows. Section~\ref{sec:background} collects some background results on Heegaard Floer homology with Novikov coefficients, sutured Floer homology, bordered Floer homology, and bordered-sutured Floer homology. Section~\ref{sec:non-vanishing} proves the non-vanishing theorems we need for Heegaard Floer homology, Theorems~\ref{thm:detect-S2} and~\ref{thm:HF-nonvanish}, as well as a non-vanishing result for sutured Floer homology, Theorem~\ref{thm:SFH-nonvanishing}. Section~\ref{sec:bordered-detects} shows that bordered Floer homology detects essential compressing disks for 3-manifolds with connected boundary, Theorem~\ref{thm:detect-incompress}. An extension to disconnected boundary is given in Section~\ref{sec:multiple-bdy}. Section~\ref{sec:tangles} proves that bordered-sutured Floer homology detects partly boundary parallel tangles, Theorem~\ref{thm:detect-tang}. The long final section of the paper (Section~\ref{sec:comp}) is devoted to showing that these results are effective and, at least plausibly, computationally useful. The main work in that section is extending the ``factoring'' algorithm for computing $\HFa$~\cite{LOT4} to compute bordered-sutured Floer homology; this extension seems of independent interest.

\emph{Acknowledgments.} We thank Nick Addington, Paolo Ghiggini, Jake Rasmussen, and Sarah Rasmussen for helpful conversations. A similar result to Theorem~\ref{thm:detect-S2} appears in forthcoming work of Juh\'asz-Levine-Marengon. RL also thanks the Isaac Newton Institute for
Mathematical Sciences, Cambridge, for support and hospitality during the
programme ``Homology theories in low-dimensional topology,'' supported by EPSRC
grant no EP/K032208/1, where part of this work undertaken. Finally, we thank the referee for many helpful suggestions.

\section{Background}\label{sec:background}

\subsection{Variants on Novikov coefficients}
Fix a closed $3$-manifold $Y$ and a field $\Field$. Let $\Lambda$
denote the universal Novikov field with coefficients in $\Field$, so
elements of $\Lambda$ are formal sums
$\sum_{a_i\to\infty}c_{a_i}T^{a_i}$ where $\{a_i\}_{i=0}^\infty$ is a
sequence of real numbers with $\lim_{i\to\infty}a_i=\infty$,
$c_{a_i}\in\Field$, and $T$ is a formal variable. 
(The $c_{a_i}$ may be $0$ so, in particular, finite linear combinations of $T$-powers are allowed.)
Any closed $2$-form $\omega$ induces a map $e_\omega\co H_2(Y)\to \RR$ 
by setting $e_\omega(A)=\int_A\omega$. 

Write elements of the group ring $\Field[H_2(Y)]$ as formal finite sums
$\sum_i n_ie^{\alpha_i}$ where $n_i\in\Field$, $\alpha_i\in H_2(Y)$,
and $e$ is just notation.
Using the map $e_\omega$ one may define a map $\psi_{\omega}\co \Field[H_2(Y)]\to\Lambda$ as
\[\psi_{\omega}(e^{\alpha})=T^{e_{\omega}(\alpha)}.\]
The map $\psi_\omega$ gives the Novikov field $\Lambda$ the structure of a $\Field[H_2(Y)]$-module, denoted by $\Lambda_\omega$. Thus, we can consider Heegaard Floer homology with coefficients in $\Lambda_\omega$~\cite[Section 8]{OS04:HolDiskProperties} and, in particular, $\tHFa(Y;\Lambda_\omega)$. Up to isomorphism, $\tHFa(Y;\Lambda_\omega)$ depends only on the cohomology class of $\omega$~\cite[Proposition 2.1]{Ni09:FiberedMfld}.

Let $I_{\omega}=\ker(e_{\omega})$.
The map $\psi_\omega$ induces an injective map 
$\Field[H_2(Y)/I_{\omega}]\to \Lambda_\omega$ and thus a field homomorphism
\[\imath_{\omega}\co \Field(H_2(Y)/I_{\omega})\to\Lambda_\omega\]
where $\Field(H_2(Y)/I_{\omega})$ is the field of fractions of $\Field[H_2(Y)/I_{\omega}]$. Let $R_{\omega}=\Field[H_2(Y)/I_{\omega}]$ and $K_\omega=\Field(H_2(Y)/I_{\omega})$. Since
\[\tHFa(Y;\Lambda_\omega)\cong H_*\left(\tCFa(Y;R_{\omega})\otimes_{R_{\omega}}\Lambda_{\omega}\right)\cong H_*\left(\tCFa(Y;K_{\omega})\otimes_{K_{\omega}}\Lambda_{\omega}\right)
\]
and $K_\omega$ is a field, 
the universal coefficients theorem implies that 
\[
\tHFa(Y;\Lambda_{\omega})\cong \tHFa(Y;K_{\omega})\otimes_{K_{\omega}}\Lambda_{\omega}.
\]
Furthermore, since $K_\omega$ and $\Lambda_{\omega}$ are fields, 
\begin{equation}\label{eq:tdim}
  \dim_{\Lambda_{\omega}}\left(\tHFa(Y;\Lambda_{\omega})\right)=\dim_{K_\omega}\left(\tHFa(Y;K_{\omega})\right).
\end{equation}

\begin{lemma}
  For any closed $2$-form $\omega\in\Omega^{2}(Y)$, we have
  \[
    \dim_{\Lambda_{\omega}}\left(\tHFa(Y;\Lambda_{\omega})\right)\ge\dim _{\Field(H_2(Y))}\left(\tHFa(Y;\Field(H_2(Y)))\right).
  \]
\end{lemma}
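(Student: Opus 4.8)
The plan is to reduce the statement to elementary linear algebra over the group ring $R=\Field[H_2(Y)]$. Since $Y$ is a closed oriented $3$-manifold, $H_2(Y;\ZZ)$ is free abelian, so $R$ is a Laurent polynomial ring over $\Field$ and in particular a Noetherian integral domain; write $Q=\Field(H_2(Y))$ for its fraction field. I would first recall the standard fact that, for a weakly admissible Heegaard diagram, the fully twisted complex $C=\tCFa(Y;R)$ is a bounded complex of finitely generated free $R$-modules, and that every coefficient system appearing here arises from it by extension of scalars; in particular $\tCFa(Y;Q)\cong C\otimes_R Q$ and $\tCFa(Y;K_\omega)\cong\tCFa(Y;R_\omega)\otimes_{R_\omega}K_\omega\cong C\otimes_R K_\omega$, where $R$ acts on $K_\omega$ through the surjection $R\twoheadrightarrow R_\omega$ followed by the inclusion $R_\omega\into K_\omega$.

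Next I would invoke the elementary formula: for a bounded complex $D$ of finite-dimensional $F$-vector spaces with $D_i\cong F^{n_i}$ and differentials $\delta_i\colon D_i\to D_{i-1}$, one has $\dim_F H_i(D)=n_i-\rank_F(\delta_i)-\rank_F(\delta_{i+1})$. Applying this with $F=Q$ and with $F=K_\omega$ to $D=C\otimes_R F$ (whose differentials are $\partial_i\otimes_R F$, for $\partial_i$ the differentials of $C$), the lemma reduces, degree by degree, to the single inequality $\rank_{K_\omega}(\partial_i\otimes_R K_\omega)\le\rank_Q(\partial_i\otimes_R Q)$: summing $n_i-\rank_{K_\omega}(\partial_i\otimes K_\omega)-\rank_{K_\omega}(\partial_{i+1}\otimes K_\omega)\ge n_i-\rank_Q(\partial_i\otimes Q)-\rank_Q(\partial_{i+1}\otimes Q)$ over $i$ gives $\dim_{K_\omega}\tHFa(Y;K_\omega)\ge\dim_Q\tHFa(Y;Q)$, and the lemma follows using \eqref{eq:tdim} on the left side and $Q=\Field(H_2(Y))$ on the right.

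The one substantive point — classical, but the crux of the argument — is this rank inequality, which says the rank of a matrix over $R$ can only drop when one passes from the generic fiber of $\mathrm{Spec}\,R$ to a special fiber. Concretely: if $\partial_i$ is represented by a matrix $M$ with entries in $R$ and $\rank_{K_\omega}(\partial_i\otimes_R K_\omega)=k$, choose a $k\times k$ submatrix of $M$ whose determinant is nonzero in $K_\omega$; since the determinant commutes with the ring homomorphism $R\to K_\omega$, which sends $0$ to $0$, that determinant is already nonzero in $R$, hence nonzero in $Q$ because $R$ is a domain, so $\rank_Q(\partial_i\otimes_R Q)\ge k$. (Conceptually this is the upper semicontinuity of $\mathfrak p\mapsto\dim_{\kappa(\mathfrak p)}H_i(C\otimes_R\kappa(\mathfrak p))$ on $\mathrm{Spec}\,R$, evaluated at the generic point and at the prime $\ker(R\to R_\omega)$, but the one-line argument with minors is cleaner here.) I expect the only step needing genuine care to be the setup in the first paragraph — verifying that the twisted complex really is a bounded complex of finitely generated free modules over the honest group ring $R$, with the stated base-change identities — but for the hat flavor with a weakly admissible diagram this is routine Ozsv\'ath--Szab\'o machinery, and everything after it is formal.
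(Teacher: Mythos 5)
Your proposal is correct and takes essentially the same route as the paper: both reduce the statement to the semicontinuity of matrix rank under the specialization $R=\Field[H_2(Y)]\to R_\omega$, proved by observing that a nonvanishing $k\times k$ minor over the quotient already gives a nonvanishing minor over $R$, hence over its fraction field $\Field(H_2(Y))$. The only cosmetic difference is that you track the homological grading and write $\dim H_i = n_i - \rank\partial_i - \rank\partial_{i+1}$, whereas the paper treats the whole differential as a single $N\times N$ matrix $D$ and uses $\dim\tHFa = N - 2\rank D$; these are the same computation.
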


\begin{proof}[Proof, thanks to N.~Addington]
  The chain group $\CFa(Y;\Field[H_2(Y)])$ is the
free $\Field[H_2(Y)]$-module generated by the intersection points of
$\mathbb{T}_{\alpha}$ and $\mathbb{T}_{\beta}$. Let
$N=\#(\mathbb{T}_{\alpha}\cap\mathbb{T}_{\beta})$. Then the
differential $\bdy$ is represented by an $N\times N$
matrix $D\in M_{N\times N}(\Field[H_2(Y)])$.

For any subgroup $I$ of $H_2(Y)$ let $\rank_{\Field[H_2(Y)/I]}(D)$ be the largest $n$ such that there is an $n\times n$ minor of $D$ which has a nonvanishing determinant in $\Field[H_2(Y)/I]$. Thus, 
\[\rank_{\Field[H_2(Y)]}(D)\ge\rank_{\Field[H_2(Y)/I]}(D).\]
On the other hand, if the quotient group $H_2(Y)/I$ has no torsion then
\[\dim_{\Field(H_2(Y)/I)}\left(\tHFa(Y;\Field(H_2(Y)/I))\right)=N-2\rank_{\Field[H_2(Y)/I]}(D).\]
Therefore, for such $I$, $$\dim_{\Field(H_2(Y)/I)}\left(\tHFa(Y;\Field(H_2(Y)/I))\right)\ge\dim_{\Field(H_2(Y))}\left(\tHFa(Y;\Field(H_2(Y)))\right)$$
and the claim follows from the Equation~\eqref{eq:tdim}.
\end{proof}

\begin{definition}
A closed $2$-form $\omega$ is \emph{generic} if the corresponding map $e_\omega$ is injective. 
\end{definition}

Note that the set of generic $2$-forms is dense (in fact, co-meager).

\begin{corollary}\label{HFdim-Novikov}
Let $\omega\in\Omega^2(Y)$ be a generic closed $2$-form. Then for any closed $2$-form $\omega'\in\Omega^2(Y)$ we have 
$$\dim _{\Lambda_{\omega'}}(\tHFa(Y;\Lambda_{\omega'}))\ge\dim _{{\Lambda_\omega}}(\tHFa(Y;\Lambda_{\omega})).$$
\end{corollary}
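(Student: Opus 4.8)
The plan is to extract the Corollary directly from the preceding Lemma together with the identity~\eqref{eq:tdim}, using the fact that for a generic form the intermediate field $K_\omega$ is as large as it can be.

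The only observation needed about genericity is that it forces $I_\omega=\ker(e_\omega)=0$, so that $H_2(Y)/I_\omega=H_2(Y)$ and hence $K_\omega=\Field(H_2(Y))$. (Such $\omega$ exist: $H_2(Y;\ZZ)\cong H^1(Y;\ZZ)$ is finitely generated and torsion-free by Poincar\'e duality and universal coefficients, and every class in $H^2(Y;\RR)\cong\Hom(H_2(Y),\RR)$ is represented by a closed form, while a generic linear functional on a finitely generated free abelian group is injective.) Feeding $K_\omega=\Field(H_2(Y))$ into~\eqref{eq:tdim} gives
\[
  \dim_{\Lambda_{\omega}}\bigl(\tHFa(Y;\Lambda_{\omega})\bigr)
  =\dim_{K_\omega}\bigl(\tHFa(Y;K_\omega)\bigr)
  =\dim_{\Field(H_2(Y))}\bigl(\tHFa(Y;\Field(H_2(Y)))\bigr);
\]
that is, for a generic form the inequality in the Lemma is an equality. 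Note that the torsion-freeness hypothesis in the Lemma's dimension count $\dim=N-2\rank$ is satisfied here, since $I_\omega=0$ and $H_2(Y)$ is itself torsion-free, so the identification $K_\omega=\Field(H_2(Y))$ is legitimate.

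Finally, apply the Lemma to the arbitrary closed $2$-form $\omega'$:
\[
  \dim_{\Lambda_{\omega'}}\bigl(\tHFa(Y;\Lambda_{\omega'})\bigr)
  \ge \dim_{\Field(H_2(Y))}\bigl(\tHFa(Y;\Field(H_2(Y)))\bigr)
  = \dim_{\Lambda_{\omega}}\bigl(\tHFa(Y;\Lambda_{\omega})\bigr),
\]
which is the desired inequality. There is no real obstacle beyond assembling these pieces; all of the substantive content sits in the Lemma (Addington's minor-rank argument) and in~\eqref{eq:tdim}, and the one point meriting care — that the generic choice makes $K_\omega$ equal to the full fraction field $\Field(H_2(Y))$ appearing on the right-hand side of the Lemma — is handled above.
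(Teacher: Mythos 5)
Your proof is correct and is exactly the argument the paper leaves implicit: for a generic $\omega$ one has $I_\omega=0$, hence $K_\omega=\Field(H_2(Y))$, so Equation~\eqref{eq:tdim} turns the Lemma's inequality into an equality, and applying the Lemma to $\omega'$ finishes the job. Your remark about torsion-freeness, while not wrong, is superfluous here since $H_2(Y)/I_\omega$ embeds in $\RR$ via $e_\omega$ and is therefore automatically torsion-free for every $\omega$, not just generic ones.
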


We conclude this section by recalling the K\"unneth theorem for Novikov coefficients (stated in~\cite[Section 2.3]{Ni09:FiberedMfld}):
\begin{lemma}\label{lem:Novikov-Kunneth}
  Suppose that $Y\cong Y_1\# Y_2$, $\omega_i\in H^2(Y_i;\RR)$, and $\omega=\omega_1\#\omega_2$. Then
  \[
    \tHFa(Y;\Lambda_\omega)\cong\tHFa(Y_1;\Lambda_{\omega_1})\otimes_\Lambda\tHFa(Y_2;\Lambda_{\omega_2})
  \]
  as $\Lambda$-vector spaces.
\end{lemma}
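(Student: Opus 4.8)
The plan is to lift everything to the chain level, where the connected-sum Heegaard diagram has a product structure, and then commute tensor products formally over the field $\Lambda$.

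First I would recall the fully twisted connected-sum formula of Ozsv\'ath--Szab\'o~\cite{OS04:HolDiskProperties}. If $\HD_i$ is a pointed Heegaard diagram for $Y_i$, let $\HD_1\#\HD_2$ be the connected-sum diagram formed near the two basepoints; it represents $Y$. Under the standard isomorphism $H_2(Y)\cong H_2(Y_1)\oplus H_2(Y_2)$ (additivity of $H_2$ under connected sum), which also identifies $\Field[H_2(Y)]\cong\Field[H_2(Y_1)]\otimes_\Field\Field[H_2(Y_2)]$ compatibly with periodic domains, there is a chain homotopy equivalence of complexes of free $\Field[H_2(Y)]$-modules
\[
  \tCFa(Y;\Field[H_2(Y)])\;\simeq\;\tCFa(Y_1;\Field[H_2(Y_1)])\otimes_\Field\tCFa(Y_2;\Field[H_2(Y_2)]),
\]
up to an overall grading shift that is irrelevant here. (On generators this is the bijection $\x_1\times\x_2\leftrightarrow\x_1\otimes\x_2$ coming from the product region of $\HD_1\#\HD_2$, and the coefficient identification just records that the $H_2$-class of a domain splits as the sum of the classes of its two pieces.)

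Next I would identify the Novikov coefficient module. The hypothesis $\omega=\omega_1\#\omega_2$ means precisely that $e_\omega(\alpha_1,\alpha_2)=e_{\omega_1}(\alpha_1)+e_{\omega_2}(\alpha_2)$ under $H_2(Y)\cong H_2(Y_1)\oplus H_2(Y_2)$, since a class $(\alpha_1,\alpha_2)$ is represented by a surface avoiding the connect-sum sphere with one piece in each summand. Hence $\psi_\omega\co\Field[H_2(Y)]\to\Lambda$ is the composite of $\psi_{\omega_1}\otimes\psi_{\omega_2}\co\Field[H_2(Y_1)]\otimes_\Field\Field[H_2(Y_2)]\to\Lambda\otimes_\Field\Lambda$ with the multiplication $\Lambda\otimes_\Field\Lambda\to\Lambda$; equivalently, as a module over $\Field[H_2(Y_1)]\otimes_\Field\Field[H_2(Y_2)]$, the module $\Lambda_\omega$ is $\Lambda$ with $\Field[H_2(Y_i)]$ acting through $\psi_{\omega_i}$.

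Finally I would tensor the displayed chain homotopy equivalence with $\Lambda_\omega$ over $\Field[H_2(Y)]$ (which preserves chain homotopy equivalences) and apply the elementary natural isomorphism
\[
  \bigl(C_1\otimes_\Field C_2\bigr)\otimes_{R_1\otimes_\Field R_2}\Lambda\;\cong\;\bigl(C_1\otimes_{R_1}\Lambda\bigr)\otimes_\Lambda\bigl(C_2\otimes_{R_2}\Lambda\bigr),
\]
valid for commutative $\Field$-algebras $R_1,R_2$ with $\Field$-algebra maps to $\Lambda$ and complexes $C_i$ of $R_i$-modules. Combining this with the previous two steps gives
\[
  \tCFa(Y;\Lambda_\omega)\;\simeq\;\tCFa(Y_1;\Lambda_{\omega_1})\otimes_\Lambda\tCFa(Y_2;\Lambda_{\omega_2})
\]
as complexes of $\Lambda$-vector spaces; passing to homology and invoking the K\"unneth theorem over the field $\Lambda$ (so there are no $\Tor$ terms) yields the lemma. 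The one genuinely topological ingredient, and the main thing to be careful about, is the twisted connected-sum formula of the first step: the content is that the chain homotopy equivalence for $\HD_1\#\HD_2$---which away from the connect-sum region is literally the tensor product of the two diagrams---can be chosen to respect the product decomposition of the coefficient ring. Granting that, the rest is formal homological algebra over the field $\Lambda$.
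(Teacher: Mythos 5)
Your proof is correct and takes essentially the same route as the paper, which simply cites ``the definitions of the twisted complexes and the (easy) proof of the K\"unneth theorem for untwisted $\HFa$'' \cite[Proposition 6.1]{OS04:HolDiskProperties}; you have just spelled out the chain-level splitting over $\Field[H_2(Y_1)]\otimes_\Field\Field[H_2(Y_2)]$, the identification of $\Lambda_\omega$ as a bimodule, and the base-change isomorphism that the paper leaves implicit.
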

\begin{proof}
  This follows from the definitions of the twisted complexes and the (easy) proof of the K\"unneth theorem for untwisted $\HFa$~\cite[Proposition 6.1]{OS04:HolDiskProperties}.
\end{proof}

\subsection{Sutured Floer homology} For the reader's convenience, we recall a few of the basic definitions and properties of sutured Floer homology.

A \emph{sutured manifold} (without toroidal sutures) is an
oriented $3$-manifold-with-boundary $Y$ along with a set $\Gamma$ of
pairwise disjoint circles in $\bdy Y$, called \emph{sutures}, which
divide $\bdy Y$ into positive and negative regions, denoted
$R_+(\Gamma)$ and $R_-(\Gamma)$, so that $\bdy R_+(\Gamma)=\bdy
R_-(\Gamma)=\Gamma$. Note that we may oriented the components of
$\Gamma$ by the induced orientation from $R_+(\Gamma)$. The sutured
manifold $(Y,\Gamma)$ is called \emph{balanced} if
$\chi(R_+(\Gamma))=\chi(R_-(\Gamma))$ and all components of $\bdy Y$
intersect $\Gamma$, and $(Y,\Gamma)$ is called \emph{taut} if
$Y$ is irreducible and $R_+(\Gamma)$ and $R_-(\Gamma)$ are incompressible and Thurston-norm minimizing in $H_2(Y,\Gamma)$. Note that if $\Gamma$ intersects all components of $\bdy Y$, $Y$ is irreducible, and $R_+(\Gamma)$ and $R_-(\Gamma)$ are Thurston-norm minimizing in $H_2(Y,\Gamma)$, then $(Y,\Gamma)$ is taut if and only if every annular component of $R_+(\Gamma)$ and $R_-(\Gamma)$ is incompressible.  

Given a sutured manifold $(Y,\Gamma)$, the sutured Floer complex
$\SFC(Y,\Gamma)$ is defined as an extension of
$\CFa$~\cite{Juhasz06:Sutured}. Unlike $\HFa(Y)$, sutured Floer
homology can vanish:

\begin{theorem}~\cite[Proposition 9.18]{Juhasz06:Sutured}\label{thm:vanishSFH}
  If an irreducible balanced sutured manifold $(Y,\Gamma)$ is not taut then $\SFH(Y,\Gamma)=0$.
\end{theorem}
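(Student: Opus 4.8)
The plan is to prove the contrapositive sharpening: if $Y$ is irreducible and $\SFH(Y,\Gamma)\neq 0$, then $(Y,\Gamma)$ is taut. By Gabai's theory, a balanced sutured manifold is taut if and only if it admits a taut sutured manifold hierarchy terminating in product sutured manifolds, so it suffices to produce such a hierarchy starting from $(Y,\Gamma)$. I would build it by induction on complexity (say, $-\chi$ of a sutured Heegaard surface, or Gabai's complexity), using three ingredients: the surface decomposition formula $\SFH(Y',\Gamma')\cong\bigoplus_{\mathfrak{s}\in O_S}\SFH(Y,\Gamma,\mathfrak{s})$ for a well-groomed decomposing surface $S$; an adjunction-type inequality for $\SFH$; and the computation that $\SFH$ of a product sutured manifold has rank one. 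Irreducibility enters at the start and is preserved along the way: whenever an intermediate $Y_i$ is reducible, an interior $S^2$ splits off a closed summand $Y''$, and the connected-sum formula $\SFH(Y_i,\Gamma_i)\cong\SFH(Y_i',\Gamma_i)\otimes\HFa(Y'')$ together with $\HFa(Y'')\neq 0$ (Theorem~\ref{thm:HF-nonvanish}) lets me replace $Y_i$ by its prime-away-from-$\Gamma_i$ part without losing nonvanishing of $\SFH$.

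For the inductive step, given a balanced $(Y_i,\Gamma_i)$ with $Y_i$ irreducible and $\SFH(Y_i,\Gamma_i)\neq 0$: if $(Y_i,\Gamma_i)$ is already a product, stop; otherwise, using the adjunction inequality, among the relative $\mathrm{Spin}^c$ structures $\mathfrak{s}$ with $\SFH(Y_i,\Gamma_i,\mathfrak{s})\neq 0$ I would select an extremal one with respect to a suitably chosen homology class, whose corresponding ``outer'' region is cut off by a well-groomed decomposing surface $S_{i+1}$ along which the decomposition $(Y_i,\Gamma_i)\rightsquigarrow^{S_{i+1}}(Y_{i+1},\Gamma_{i+1})$ is taut; then $\SFH(Y_{i+1},\Gamma_{i+1})\cong\bigoplus_{\mathfrak{s}\in O_{S_{i+1}}}\SFH(Y_i,\Gamma_i,\mathfrak{s})$ contains the chosen extremal summand and is therefore nonzero, while the complexity strictly drops. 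The process must terminate, and the only balanced, irreducible terminal case carrying nonzero $\SFH$ is a product sutured manifold (rank one), so the resulting hierarchy is taut; hence $(Y,\Gamma)$ is taut. Applied to the original statement: an irreducible balanced $(Y,\Gamma)$ that is not taut cannot have $\SFH(Y,\Gamma)\neq 0$.

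The step I expect to be the main obstacle is the adjunction inequality for $\SFH$ and, more delicately, the assertion that its \emph{extremal} nonvanishing $\mathrm{Spin}^c$ structures are exactly the ``outer'' structures for some \emph{taut}, well-groomed decomposing surface -- this is precisely what makes the inductive step go through, and it has to be compatible with the surface decomposition formula. Establishing these requires choosing sutured Heegaard diagrams subordinate to decomposing surfaces and controlling which domains can support holomorphic discs, and it is where essentially all the content lies. The remaining points -- handling terminal pieces with no nontrivial decomposing surface, keeping track of disconnected or non-orientable decomposing surfaces, maintaining the ``well-groomed'' condition so that tautness of each decomposition is preserved, and the Künneth reduction to irreducible $Y_i$ -- are comparatively routine bookkeeping.
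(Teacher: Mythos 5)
The paper does not prove this statement; it is quoted verbatim as Proposition 9.18 of Juh\'asz's \emph{Holomorphic discs and sutured manifolds}, so there is no in-paper proof to compare against. What can be assessed is whether your proposed route is sound, and I do not think it is, for a reason that is essentially circularity rather than a repairable technical gap.

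You want to deduce tautness of $(Y,\Gamma)$ by constructing a taut sutured hierarchy, with the key inductive step being: from $\SFH(Y_i,\Gamma_i)\neq 0$, choose an ``extremal'' relative $\mathrm{Spin}^c$ structure and produce a well-groomed decomposing surface $S_{i+1}$ \emph{along which the decomposition is taut}. But by definition a taut decomposition requires both that $S_{i+1}$ is norm-minimizing \emph{and} that the resulting $(Y_{i+1},\Gamma_{i+1})$ is taut --- and the latter is precisely what the whole argument is trying to establish. Said differently: if $(Y_i,\Gamma_i)$ is not taut, then no taut decomposing surface exists at all (Gabai), so the assertion ``$\SFH\neq 0$ plus irreducible implies a taut decomposing surface exists'' is not a lemma feeding into the induction --- it is a restatement, for $(Y_i,\Gamma_i)$, of the theorem you are proving. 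The surface decomposition formula runs the other way: given a nice decomposing surface, it computes $\SFH$ of the decomposition in terms of outer $\mathrm{Spin}^c$ structures. It offers no mechanism for extracting a decomposing surface (let alone a norm-minimizing one, or a complexity-decreasing one) from nonvanishing of $\SFH$. The adjunction inequality constrains the $\mathrm{Spin}^c$ grading; it does not manufacture surfaces. Your fallback that these issues reduce to ``choosing Heegaard diagrams subordinate to decomposing surfaces and controlling domains'' is exactly backwards --- those techniques presuppose the decomposing surface, which is the object you lack. There is also a secondary problem with the reducibility cleanup: if some intermediate $(Y_i,\Gamma_i)$ is reducible, then the hierarchy built so far already fails to be taut (irreducibility of every stage is part of the definition), and replacing $Y_i$ by its prime part via the connected sum formula is not a sutured manifold decomposition, so it does not splice into a hierarchy.

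For comparison: Juh\'asz's actual argument (which predates the 2008 surface decomposition theorem and hence cannot use it) is direct and local. One constructs an admissible balanced sutured Heegaard diagram adapted to a compressing disk for $R(\Gamma)$ (or to a norm-reducing surface), and shows the resulting chain complex is acyclic --- roughly, the diagram can be arranged so that there are no generators in the relevant $\mathrm{Spin}^c$ structures, or so that a simple closed curve in $\Sigma$ disjoint from $\alphas\cup\betas$ forces vanishing. No hierarchies, no adjunction, no $\mathrm{Spin}^c$ extremality. If you want a modern re-proof, that is the template to follow; the hierarchy-building route would, in effect, require you to first prove the theorem by other means.
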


If one decomposes a balanced sutured manifold along a surface then the sutured Floer homology of the resulting sutured manifold is a direct summand of the sutured Floer homology of the original sutured manifold~\cite[Theorem 1.3]{Juhasz08:SuturedDecomp}. As a corollary of this, Juh\'asz shows: 
\begin{theorem}\cite[Theorem 1.4]{Juhasz08:SuturedDecomp}\label{thm:nonvanishSFH} If 
$(Y,\Gamma)$ is a taut, balanced sutured manifold then $\SFH(Y,\Gamma)\neq 0$.
\end{theorem}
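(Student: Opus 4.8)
The plan is to argue by downward induction along a sutured manifold hierarchy, feeding the sutured decomposition theorem quoted just above (Theorem~1.3 of \cite{Juhasz08:SuturedDecomp}) into a base-case computation for product sutured manifolds. By Gabai's theory, any taut balanced sutured manifold $(Y,\Gamma)$ admits a finite hierarchy
\[
  (Y,\Gamma)=(Y_0,\Gamma_0)\rightsquigarrow(Y_1,\Gamma_1)\rightsquigarrow\cdots\rightsquigarrow(Y_n,\Gamma_n),
\]
where each decomposition is along a decomposing surface $S_i\subset Y_{i-1}$, every $(Y_i,\Gamma_i)$ is again taut and balanced, and the final term $(Y_n,\Gamma_n)$ is a disjoint union of product sutured manifolds $(R\times[0,1],(\bdy R)\times[0,1])$. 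Crucially, one can arrange each $S_i$ to be a \emph{nice} (e.g.\ well-groomed) decomposing surface: no closed components, and $S_i$ meeting each component of $R_\pm(\Gamma_{i-1})$ in a collection of coherently oriented parallel arcs, so that Theorem~1.3 of \cite{Juhasz08:SuturedDecomp} applies at every stage.

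First I would dispatch the base case: a product sutured manifold $(R\times[0,1],(\bdy R)\times[0,1])$ has $\SFH\cong\Field$, hence nonzero. This is immediate from the definition of $\SFH$ in \cite{Juhasz06:Sutured} using the standard ``thickened surface'' sutured Heegaard diagram for such a manifold, which has a single generator and no holomorphic disks. Thus $\SFH(Y_n,\Gamma_n)\neq0$.

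Then the inductive step is short: if $\SFH(Y_i,\Gamma_i)\neq 0$, then since $S_i$ is nice, the decomposition theorem exhibits $\SFH(Y_i,\Gamma_i)$ as a direct summand of $\SFH(Y_{i-1},\Gamma_{i-1})$ (namely the summand supported in the $\mathrm{Spin}^c$ structures outer with respect to $S_i$), and a nonzero direct summand forces $\SFH(Y_{i-1},\Gamma_{i-1})\neq 0$. Running this from $i=n$ down to $i=1$ yields $\SFH(Y,\Gamma)=\SFH(Y_0,\Gamma_0)\neq 0$.

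The real work is in the first paragraph, not in the induction. One must check that Gabai's hierarchies can be kept inside the class of taut \emph{balanced} sutured manifolds — if a decomposition breaks balancedness (unequal $\chi(R_\pm)$, or a boundary component disjoint from the sutures) it has to be repaired without changing $\SFH$ — and that the decomposing surfaces can simultaneously be taken nice in the sense demanded by Theorem~1.3, which is where the well-groomed refinement of Gabai's construction enters; one also needs the hierarchy actually to terminate in products rather than in some less controlled taut manifold. Once this bookkeeping is in place the argument is exactly the downward induction above, and together with Theorem~\ref{thm:vanishSFH} it shows that $\SFH$ detects tautness of irreducible balanced sutured manifolds.
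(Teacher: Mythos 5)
This theorem is not proved in the paper at hand: it is quoted directly from Juh\'asz's work \cite[Theorem 1.4]{Juhasz08:SuturedDecomp}, and the sentence immediately preceding the statement tells the reader that Juh\'asz derives it as a corollary of his surface-decomposition theorem (the paper's Theorem~\ref{thm:nonvanishSFH} label is attached to a cited result). So there is no ``paper's own proof'' to compare against; the right benchmark is Juh\'asz's original argument, and your proposal reconstructs it faithfully. Your outline --- a Gabai sutured-manifold hierarchy terminating in products, the computation $\SFH\cong\Field$ for a product sutured manifold, and downward induction via the direct-summand statement of Theorem~1.3 --- is exactly the shape of Juh\'asz's proof, and you correctly flag the real technical content: one must keep the hierarchy inside the class of taut \emph{balanced} sutured manifolds and arrange that every decomposing surface is of the restricted type (well-groomed / ``nice'') to which the decomposition theorem applies, which is what Juh\'asz's Theorem~8.2 supplies. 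One small caution worth making explicit if you were to flesh this out: Gabai's construction guarantees termination in a manifold with $H_2(Y_n,\bdy Y_n)=0$, and it takes an extra (easy) observation that a taut balanced sutured manifold with this property is a disjoint union of products; also the surfaces must be taken with no closed components so that the ``outer $\mathrm{Spin}^c$'' summand in Theorem~1.3 is literally a direct summand of the full $\SFH$. With those caveats your argument is correct and is the standard one.
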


\subsection{Bordered Floer homology}
In this section, we review some aspects of bordered Floer homology, mainly to
fix notation.

\subsubsection{Bordered basics}
In bordered Floer homology, a surface is represented by a
\emph{pointed matched circle} $\PMC=(Z,\mathbf{a},M,z)$ where $Z$ is
an oriented circle, $\mathbf{a}$ is $4k$ points in $Z$, $M$ is a
pairing of points in $\mathbf{a}$, and $z\in Z\setminus\mathbf{a}$. There is a requirement that attaching $1$-handles to $[0,1]\times Z$ along $\{0\}\times\mathbf{a}$, according to the matching $M$, gives a surface of genus $k$ (with two boundary components); the result of filling in these two boundary components with disks is denoted $F(\PMC)$~\cite[Section 3.2]{LOT1}. Given a pointed matched circle $\PMC$, reversing the orientation of $Z$ gives a new pointed matched circle $-\PMC$, and $F(-\PMC)=-F(\PMC)$, the orientation reverse of $F(\PMC)$.

Associated to a pointed matched circle $\PMC$ is a \dg algebra $\Alg(\PMC)$ over $\Ftwo$. A basis for $\Alg(\PMC)$ over $\Ftwo$ is given by \emph{strand diagrams}, which consist of a subset of $\mathbf{a}$ and a collection of intervals in $Z\setminus \{z\}$ with endpoints on $\mathbf{a}$, satisfying some conditions. In particular, a strand diagram $b$ has a \emph{support} $[b]\in H_1(Z\setminus \{z\},\mathbf{a})\cong \ZZ^{4k-1}$. Also, corresponding to any subset $\mathbf{s}$ of the matched pairs in $\mathbf{a}$ is a \emph{basic idempotent} $I(\mathbf{s})$. These basic idempotents span a subalgebra $\Idem(\PMC)\subset\Alg(\PMC)$; in fact, as a ring, $\Idem(\PMC)\cong \Ftwo^{2^{2k}}$.

A \emph{bordered $3$-manifold} is a $3$-manifold $Y$ together with a homeomorphism $\phi\co F(\PMC)\to \bdy Y$ for some pointed matched circle $\PMC$. Given a bordered $3$-manifold $Y$, there is a corresponding left \dg module $\lsup{\Alg(-\PMC)}\CFDa(Y)$ over $\Alg(-\PMC)$~\cite[Section 6.1]{LOT1} and right $\Ainf$-module $\CFAa(Y)_{\Alg(\PMC)}$ over $\Alg(\PMC)$~\cite[Section 7.1]{LOT1}. In fact, $\CFDa(Y)$ has a special form: it is a \emph{type $D$ structure} or \emph{twisted complex}, a generalization of projective modules.

The bordered invariants relate to the Heegaard Floer complex $\CFa$ via the \emph{pairing theorem}: given bordered $3$-manifolds $(Y_0,\phi_0\co F(\PMC)\to\bdy Y_0)$ and $(Y_1,\phi_1\co F(-\PMC)\to\bdy Y_1)$ there is a closed $3$-manifold $Y_0\thinspace \lsub{\phi_0}\!\cup_{\phi_1}\!Y_1$ obtained by gluing the boundaries together as prescribed by $\phi_0$ and $\phi_1$, and a chain homotopy equivalence
\[
  \CFa(Y_0\thinspace \lsub{\phi_0}\!\!\cup_{\phi_1}\!Y_1)\simeq \CFAa(Y_0,\phi_0)_{\Alg(\PMC)}\DT\lsup{\Alg(\PMC)}\CFDa(Y_1,\phi_1)
\]
\cite[Theorem 1.2]{LOT1}. Here $\DT$ is a version of the (derived) tensor product between an $\Ainf$-module and a type $D$ structure~\cite[Section 2.4]{LOT1}. Subscripts indicate $\Ainf$-actions, and superscripts type $D$ structures, so $\DT$ satisfies a kind of Einstein convention.

To keep notation simple, we will generally denote $\CFAa(Y_0,\phi_0)$ by
$\CFAa(Y_0)$, $\CFDa(Y_1,\phi_1)$ by $\CFDa(Y_1)$, and
$Y_0\thinspace \lsub{\phi_0}\!\cup_{\phi_1}\!Y_1$ by $Y_0\cup_\bdy Y_1$.

There are also dualities relating $\CFDa(Y)^*$, the dual type $D$
structure to $\CFDa(Y)$, to $\CFAa(-Y)$, which lead to
pairing theorems in terms of morphism complexes: with notation as above,
\begin{align*}
  \CFa(Y_0\cup_\bdy Y_1)&\simeq \Mor^{\Alg(\PMC)}(\CFDa(-Y_0),\CFDa(Y_1))\\
  &\simeq\Mor_{\Alg(-\PMC)}(\CFAa(-Y_0),\CFAa(Y_1))
\end{align*}
\cite[Theorem 1]{LOTHomPair}.

There are generalizations to manifolds with multiple boundary
components~\cite{LOT2}. In particular, given a $3$-manifold $Y$ with
two boundary components, $\bdy_0Y$ and $\bdy_1Y$, a framed arc from
$\bdy_0Y$ to $\bdy_1Y$, and homeomorphisms $F(\PMC_0)\to \bdy_0Y$ and
$F(\PMC_1)\to\bdy_1Y$ there are corresponding bimodules
$\CFAAa(Y)_{\Alg(\PMC_0),\Alg(\PMC_1)}$,
$\lsup{\Alg(-\PMC_0)}\CFDAa(Y)_{\Alg(\PMC_1)}$, and
$\lsup{\Alg(-\PMC_0),\Alg(-\PMC_1)}\CFDDa(Y)$. Here, for instance,
$\CFAAa(Y)$ is an $\Ainf$-bimodule with two right actions, by
$\Alg(\PMC_0)$ and $\Alg(\PMC_1)$. (The appearance of bimodules with
commuting right or commuting left actions is an artifact of the
original definitions, and is sometimes convenient, sometimes
inconvenient.) One can drill out a neighborhood of the framed arc to
obtain a bordered $3$-manifold $Y_{\mathrm{dr}}$ with one boundary
component. The bimodules $\CFDDa(Y)$ and $\CFAAa(Y)$ are determined by
$\CFDa(Y_{\mathrm{dr}})$ and $\CFAa(Y_{\mathrm{dr}})$ by induction and
restriction functors associated to maps of the algebras~\cite[Section 6]{LOT2}. The bimodule $\CFDAa(Y)$ is also determined by, say, $\CFAa(Y_{\mathrm{dr}})$, but in a more complicated way. These satisfy analogues of the pairing theorem, where one always glues type $D$ boundaries to type $A$ boundaries~\cite[Theorems 11 and 12]{LOT2}.

\subsubsection{Bordered-sutured Floer homology}\label{sec:borsut-background}

Next, we recall some definitions and properties of bordered-sutured
Floer homology~\cite{Zarev09:BorSut}, as well as the reformulation of
the bordered-sutured pairing theorem in terms of morphism complexes~\cite{Zarev:JoinGlue}.

A \emph{sutured surface} $(F,\gamma)$ is a compact, oriented
surface-with-boundary $F$, with no closed components, together with a finite set $\gamma$ of marked points on $\bdy F$, such that $\gamma$ intersects all boundary components of $F$ and splits $\bdy F$ as $\bdy F=S_+(\gamma)\cup_{\gamma} S_-(\gamma)$ with $\bdy S_+(\gamma)=\bdy S_-(\gamma)=\gamma$. Sutured surfaces can be parametrized by arc diagrams,
a generalization of pointed matched circles. An \emph{arc diagram}
$\PMC=(Z,\mathbf{a},M)$ is a finite collection $Z$ of oriented, closed
intervals together with a set $\mathbf{a}$ of $2k$ points in $Z$ and a
pairing $M$ of $\mathbf{a}$. It is required that the pairing
$M$ is \emph{nondegenerate}, i.e., attaching $1$-handles to
$[0,1]\times Z$ along $\{0\}\times \mathbf{a}$ as specified by $M$
results in a surface-with-boundary $F$ such that $\{1\}\times \bdy Z$
intersects all boundary components of $F$. This surface along with the
set of points $\gamma=\{1\}\times \bdy Z$ and the division
$S_+(\gamma)=\{1\}\times Z$ and $S_-(\gamma)$ its complement, is a
sutured surface denoted by $F(\PMC)$.

Let $\bar{F}(\PMC)$ denote the sutured surface obtained from $F(\PMC)$
by reversing the orientation of the underlying surface and switching
$S_+$ and $S_-$. Let $-F(\PMC)$ be the sutured surface obtained from
$F(\PMC)$ by reversing the orientation of the underlying surface but
not switching $S_+$ and $S_-$. So, if we think of $\gamma$ as oriented as the boundary of $S_+$ then the orientation of $\gamma$ in $-F(\PMC)$ is reversed.

Given sutured surfaces $(F,\gamma)$ and $(F',\gamma')$, a \emph{sutured cobordism} $(Y,\Gamma)$ from $(F,\gamma)$ to $(F',\gamma')$ is an oriented $3$-manifold $Y$ with boundary, together with a decomposition $\bdy Y\cong \bdy_s Y \cup (-F\amalg F')$ and a properly embedded $1$-manifold-with-boundary $\Gamma\subset\bdy_s Y$ 
  satisfying the following:
\begin{itemize}
\item $Y$ has no closed components,
\item $\pi_0(\Gamma)\to\pi_0(\bdy_sY)$ is surjective and $\bdy \Gamma=\gamma\amalg \gamma'$, and
\item $\Gamma$ divides  $\bdy_sY=R_+\cup_\Gamma R_-$ such that $\bdy R_+\cap\bdy R_-=\Gamma$. Furthermore, $R_\bullet\cap F=S_\bullet(\gamma)$ and $R_\bullet\cap F'=S_\bullet(\gamma')$ for $\bullet\in\{+,-\}$.
\end{itemize}
A sutured cobordism from the empty set to the empty set is a sutured manifold.
Given  a sutured cobordism $(Y,\Gamma)$, let $-(Y,\Gamma)$ be the
cobordism obtained by reversing the orientation of $Y$ and let
$m(Y,\Gamma)$, the \emph{mirror} of $(Y,\Gamma)$, be the cobordism obtained by reversing the orientation of $Y$ and switching the roles of $R_+$ and $R_-$.

Given arc diagrams $\PMC$ and $\PMC'$, a sutured cobordism $(Y,\Gamma)$ from $F(\PMC)$ to $F(\PMC')$ is called a \emph{bordered-sutured cobordism}. 
The subspace $-F(\PMC)\amalg F(\PMC')$ of $\bdy Y$ is called the \emph{bordered boundary} and $\bdy_s Y$ is called the \emph{sutured boundary}. Note that the bordered boundary of  $-(Y,\Gamma)$ is $-F(\PMC')\amalg F(\PMC)$ while the bordered boundary of $m(Y,\Gamma)$ is  $-\bar{F}(\PMC)\amalg\bar{F}(\PMC')$.

If one of $\PMC$ or $\PMC'$ is empty then 
$(Y,\Gamma)$ is called a \emph{bordered-sutured manifold}.

\begin{definition}
A \emph{bordered-sutured Heegaard diagram} is a $4$-tuple
$(\Sigma,\alphas,\betas,\mathbf{z},\bdy_L\Sigma)$ where:
\begin{itemize}
\item $\Sigma$ is a compact surface with boundary.
\item $\mathbf{z}=\mathbf{z}^c\cup\mathbf{z}^a$ is a subspace of $\bdy
  \Sigma$ such that $\mathbf{z}^c$ is a disjoint union of some
  components of $\bdy \Sigma$ and $\mathbf{z}^a$ is a finite set of
  disjoint arcs in $\bdy \Sigma\setminus\mathbf{z}^c$. 
\item $\bdy_L\Sigma$ is a union of some of the connected components of $\bdy\Sigma\setminus\mathrm{int}(\mathbf{z})$. We let $\bdy_R\Sigma$ be the union of the remaining components of $\bdy\Sigma\setminus\mathrm{int}(\mathbf{z})$.
\item $\betas$ is a union of pairwise disjoint circles in the interior of $\Sigma$.
\item $\alphas=\alphas^{a,L}\amalg \alphas^{a,R}\amalg \alphas^{c}$ is a union of pairwise disjoint, properly embedded arcs and circles in $\Sigma$, so that for $\bullet\in\{L,R\}$, each element of $\alphas^{a,\bullet}$ is an arc with boundary on $\bdy_\bullet\Sigma$, while each element of $\alphas^c$ is a circle.
\end{itemize}
We require that  $\mathbf{z}$ intersects every component of
$\bdy\Sigma$, $\Sigma\setminus\betas$, and $\Sigma\setminus\alphas$.
\end{definition}
Any bordered-sutured Heegaard diagram $\HD$ specifies two arc diagrams:
\[
\begin{split}
&\PMC_L(\HD)=(\bdy_L\Sigma,\alphas^{a,L}\cap\bdy_L\Sigma,M_L)\\
&\PMC_R(\HD)=(\bdy_R\Sigma,\alphas^{a,R}\cap\bdy_R\Sigma,M_R)
\end{split}
\] 
where $M_L$ and $M_R$ are the matchings that pair the endpoints of each arc in $\alphas^{a,L}$ and $\alphas^{a,R}$, respectively. 

Given a bordered-sutured Heegaard diagram $\HD$ we get a
bordered-sutured cobordism $\mathcal{Y}(\HD)$ from
$F(-\PMC_L(\HD))$ to $F(\PMC_R(\HD))$ as follows. Let $Y$ be the $3$-manifold obtained from $\Sigma\times [0,1]$ by attaching $2$-handles along the $\alpha$-circles in $\Sigma\times\{0\}$ and $\beta$-circles in $\Sigma\times\{1\}$. The $\alpha$-arcs specify two embedded sutured surfaces 
\[\begin{split}
&F(\PMC_L(\HD))=\left(\bdy_L\Sigma\times[0,1]\cup\nbd(\alphas^{a,L})\times\{0\},(\bdy_L\Sigma\cap\mathbf{z})\times \{1\}\right)\\
&F(\PMC_R(\HD))=\left(\bdy_R\Sigma\times[0,1]\cup\nbd(\alphas^{a,R})\times\{0\},(\bdy_R\Sigma\cap\mathbf{z})\times \{1\}\right)
\end{split}
\] 
in $\bdy Y$. Finally, let $\Gamma=\mathbf{z}\times\{1\}$, and let $R_+$ (respectively $R_-$) be the closure of the union of the components of $\bdy Y\setminus (F(\PMC_L)\cup F(\PMC_R)\cup \Gamma)$ that intersect $\Sigma\times\{1\}$ (respectively $\Sigma\times\{0\}$). Thus, $R_+$ is the result of performing surgery on the $\beta$-circles in $\HD$ and $R_-$ is the result of performing surgery on the $\alpha$-circles and deleting neighborhoods of the $\alpha$-arcs.

Associated to an arc diagram $\PMC$ is a \dg algebra $\Alg(\PMC)$,
spanned by \emph{strand diagrams}; if $\PMC$ corresponds to a pointed
matched circle, $\Alg(\PMC)$ is the same as the bordered algebra
associated to the pointed matched circle.
Next, let $\mathcal{Y}$ be a bordered-sutured cobordism from $F(\PMC)$
to $F(\PMC')$. To $\mathcal{Y}$, Zarev associates {\it bordered-sutured bimodules} $\lsup{\Alg(\PMC),\Alg(-\PMC')}\BSDD(\mathcal{Y})$, $\lsup{\Alg(\PMC)}\BSDA(\mathcal{Y})_{\Alg(\PMC')}$, and $\BSAA(\mathcal{Y})_{\Alg(-\PMC),\Alg(\PMC')}$~\cite{Zarev09:BorSut}. If $\PMC$ is empty then, $\BSDA(\mathcal{Y})$ is a right $\Alg_\infty$-module over $\Alg(\PMC')$ denoted $\BSA(\mathcal{Y})_{\Alg(\PMC')}$, while if $\PMC'$ is empty, $\BSDA(\mathcal{Y})$ is a left type $D$ structure over $\Alg(\PMC)$ denoted $\lsup{\Alg(\PMC)}\BSD(\mathcal{Y})$. Furthermore, if both $\PMC$ and $\PMC'$ are empty, i.e., $\mathcal{Y}$ is a sutured manifold, then $\BSDA(\mathcal{Y})$ is the sutured Floer complex. Similar statements are true for the other bimodules. Finally, if $\mathcal{Y}$ is a bordered-sutured manifold with bordered boundary $\bar{F}(\PMC)$, then $\lsup{\Alg(\PMC)}\BSD(\mathcal{Y})$ is a type $D$ structure over $\Alg(\PMC)$ and $\BSA(\mathcal{Y})_{\Alg(-\PMC)}$ is an $A_{\infty}$-module over $\Alg(-\PMC)$. A similar statement is true for bimodules associated with bordered-sutured cobordisms from $\bar{F}(\PMC)$ or to $\bar{F}(\PMC')$.

Suppose that $\mathcal{Y}$ is a cobordism from $F(\PMC_1)$ to
$F(\PMC_2)$. We can also regard $\mathcal{Y}$ as a cobordism from
$F\bigl(\PMC_1\amalg (-\PMC_2)\bigr)$ to the empty set. It is evident
from the definitions (which we have not given) that there is an isomorphism $\Alg(\PMC_1\amalg (-\PMC_2))\cong \Alg(\PMC_1)\otimes_{\Ftwo}\Alg(-\PMC_2)$. Furthermore, a type \DD\ structure over $\Alg(\PMC_1)$ and $\Alg(-\PMC_2)$ is exactly the same as a type $D$ structure over $\Alg(\PMC_1)\otimes_{\Ftwo} \Alg(-\PMC_2)$. With respect to these identifications,
\[
  \lsup{\Alg(\PMC_1),\Alg(-\PMC_2)}\BSDD(\mathcal{Y})\cong \lsup{\Alg(\PMC_1)\otimes_{\Ftwo}\Alg(-\PMC_2)}\BSD(\mathcal{Y}).
\]
So, in the bordered-sutured setting, one is justified in talking about the module $\BSD$ associated to a cobordism from $F(\PMC_1)$ to $F(\PMC_2)$. Also, computing the modules $\BSD$ associated to all bordered-sutured manifolds is equivalent to computing the bimodules $\BSDD$ associated to all bordered-sutured cobordisms. (Similar statements hold for $\BSA$ and $\BSAA$, but are slightly complicated by the fact that an $\Ainf$ $(\Alg_1,\Alg_2)$-bimodule is not quite the same as an $\Ainf$-module over $\Alg_1\otimes_{\Ftwo}\Alg_2$.)

Given a bordered-sutured Heegaard diagram $\HD$,  we may switch $\alpha$ circles and arcs with $\beta$ circles to get a diagram with $\beta$ arcs, called a \emph{$\beta$-bordered-sutured Heegaard diagram}, and denoted by $\HD^{\beta}$. This diagram represents the mirror $m(\mathcal{Y}(\HD))$. 

The algebras $\Alg(\PMC)$ satisfy $\Alg(-\PMC)=\Alg(\PMC)^\op$ so, for
instance, we can regard $\lsup{\Alg(\PMC)}\BSD(\mathcal{Y})$ as a
right type $D$ structure $\BSD(\mathcal{Y})^{\Alg(-\PMC)}$ over
$\Alg(-\PMC)$. Similarly, $\BSD(m(\mathcal{Y}))^{\Alg(\PMC)}$ is a
left type $D$ structure over $\Alg(-\PMC)$ or a right type $D$
structure over $\Alg(\PMC)$. In fact, since given a Heegaard diagram
$\HD$ for $\mathcal{Y}$, $\HD^\beta$ represents $m(\mathcal{Y})$, it is
immediate from the definitions that 
\begin{equation}\label{eq:BSD-dual}
  \BSD(m(\mathcal{Y}))^{\Alg(\PMC)}\cong\left(\lsup{\Alg(\PMC)}\BSD(\mathcal{Y})\right)^*
\end{equation}
\cite[Proposition 3.11]{Zarev:JoinGlue}. 

Zarev's construction has a pairing theorem as follows:

\begin{theorem}\cite[Theorem 10.4]{Zarev09:BorSut}\label{thm:bordred-sut-pairing}
Let $\PMC_i$ be an arc diagram for $i=1,2,3$. Given bordered-sutured cobordisms $\mathcal{Y}_1$ from $F(\PMC_1)$ to $F(\PMC_2)$ and $\mathcal{Y}_2$ from $F(\PMC_2)$ to $F(\PMC_3)$ we have
 \[
  \lsup{\Alg(\PMC_1)}\BSDA(\mathcal{Y}_1)_{\Alg(\PMC_2)}\DT\lsup{\Alg(\PMC_2)}\BSDA(\mathcal{Y}_2)_{\Alg(\PMC_3)}
  \simeq \lsup{\Alg(\PMC_1)}\BSDA\bigl(\mathcal{Y}_1\cup_{F(\PMC_2)} \mathcal{Y}_2\bigr)_{\Alg(\PMC_3)}.
\]
Similar statements hold for tensor products of the other types of bimodules, as long as one tensors together one type $A$ and one type $D$ action.
In particular, if $\PMC_1$ and $\PMC_3$ are empty, then $\mathcal{Y}_1\cup_{F(\PMC_2)} \mathcal{Y}_2$ is a sutured manifold and 
\[
    \SFC\bigl(\mathcal{Y}_1\cup_{F(\PMC_2)} \mathcal{Y}_2\bigr)\simeq\BSA(\mathcal{Y}_1)_{\Alg(\PMC_2)}\DT\lsup{\Alg(\PMC_2)}\BSD(\mathcal{Y}_2)
\]
\end{theorem}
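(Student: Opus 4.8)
This is Zarev's pairing theorem \cite{Zarev09:BorSut}, and the plan is to prove it by adapting the Lipshitz--Ozsv\'ath--Thurston pairing theorem \cite{LOT1} to the sutured category. The starting point is to choose bordered-sutured Heegaard diagrams $\HD_1$ for $\mathcal{Y}_1$ and $\HD_2$ for $\mathcal{Y}_2$ whose parametrizations of the shared boundary $F(\PMC_2)$ agree (so that $\PMC_R(\HD_1)=\PMC_2=\PMC_L(\HD_2)$ with compatible $\alphas$-arcs and basepoints near that boundary), and both suitably (provincially) admissible, arranged so that the glued diagram is admissible and the box tensor product is defined. Gluing $\HD_1$ and $\HD_2$ along that boundary region gives a bordered-sutured Heegaard diagram $\HD=\HD_1\cup_{\PMC_2}\HD_2$ for $\mathcal{Y}_1\cup_{F(\PMC_2)}\mathcal{Y}_2$. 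At the level of generators, an intersection point of $\HD$ is exactly a pair $(\x_1,\x_2)$ of generators of $\HD_1$ and $\HD_2$ occupying complementary idempotents along $\PMC_2$, which is precisely the $\Ftwo$-basis of $\lsup{\Alg(\PMC_1)}\BSDA(\HD_1)_{\Alg(\PMC_2)}\DT\lsup{\Alg(\PMC_2)}\BSDA(\HD_2)_{\Alg(\PMC_3)}$; moreover the idempotent and grading data match up.

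The analytic heart is a neck-stretching argument. Introduce a long neck in $\HD$ along the circle and arcs separating the $\HD_1$-part from the $\HD_2$-part, and study moduli of index-$1$ embedded holomorphic curves as the neck length $\to\infty$. By an SFT-style compactness theorem adapted to bordered-sutured diagrams, such curves converge to matched pairs: a curve $u_1$ built from $\HD_1$ and a curve $u_2$ built from $\HD_2$, with matching asymptotics along the gluing circle described by a sequence of Reeb chords. This matching is exactly the combinatorics encoded in the differential on the box tensor product: the $A_\infty$ operations on the type-$A$ ($\PMC_2$) side of $\HD_1$ read off a sequence of algebra elements of $\Alg(\PMC_2)$ that are fed into the type-$D$ operations on the $\HD_2$ side. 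Conversely, a gluing theorem shows that for the neck sufficiently stretched every such matched pair is the limit of a genuine curve in $\HD$, and the counts agree. This yields a chain homotopy equivalence $\BSDA(\HD)\simeq \BSDA(\HD_1)\DT\BSDA(\HD_2)$, and invariance of the bordered-sutured bimodules up to homotopy equivalence (independence of the diagram) upgrades this to the stated equivalence for $\BSDA(\mathcal{Y}_1\cup_{F(\PMC_2)}\mathcal{Y}_2)$.

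The remaining assertions follow formally. The variants for $\BSDD$ and $\BSAA$ (and mixed bimodules) come from running the same argument while gluing along $F(\PMC_2)$ but keeping the outer boundary $\PMC_1$ (resp.\ $\PMC_3$) of either type; one only ever glues one type-$A$ action to one type-$D$ action, which is exactly what makes the Reeb-chord matching go through. The special cases are obtained by specialization: when $\PMC_1$ or $\PMC_3$ is empty, $\BSDA$ of a bordered-sutured manifold is by definition $\BSD$ or $\BSA$, and when both are empty $\mathcal{Y}_1\cup_{F(\PMC_2)}\mathcal{Y}_2$ is a sutured manifold with $\BSDA=\SFC$, giving $\SFC(\mathcal{Y}_1\cup_{F(\PMC_2)}\mathcal{Y}_2)\simeq \BSA(\mathcal{Y}_1)_{\Alg(\PMC_2)}\DT\lsup{\Alg(\PMC_2)}\BSD(\mathcal{Y}_2)$. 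Alternatively, much of this can be deduced from the closed bordered pairing theorem of \cite{LOT1} by filling the sutured boundary of the $\mathcal{Y}_i$ with standard pieces to reduce to ordinary bordered diagrams, though the direct argument seems cleaner.

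The main obstacle is the analytic step: establishing the correct compactness statement for the neck-stretched moduli spaces in the bordered-sutured category, where $\Sigma$ has boundary and one must control the behavior of curves near $\bdy\Sigma$, the $\alphas$-arcs, and the sutures, ruling out boundary degenerations and chords running over the basepoint region $\mathbf{z}$, together with the companion gluing theorem showing that no curves are lost. A secondary but nontrivial point is the bookkeeping to verify that the degenerate configurations are counted with the multiplicities and idempotent constraints that exactly reproduce the $\DT$ operation, and to set up the admissibility/boundedness hypotheses that keep all the relevant sums finite.
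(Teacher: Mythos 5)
The paper cites this result as \cite[Theorem 10.4]{Zarev09:BorSut} and does not reprove it, so there is no in-paper proof to compare your attempt against. Your outline does capture the overall strategy of Zarev's actual proof, which is modeled on the bordered pairing theorem in \cite{LOT1}: glue compatible admissible Heegaard diagrams along $F(\PMC_2)$, match generators of the glued diagram with the basis of the box tensor product, degenerate the almost complex structure so that index-one curves split into matched pairs, and use a compactness/gluing argument to identify the resulting curve counts with the $\DT$ differential. Two places to be careful if you were to write this in full. First, the degeneration in \cite{LOT1} (and in Zarev's adaptation) is ``time dilation'' of the almost complex structure in the $[0,1]\times\RR$ directions near $\bdy\Sigma$, not neck-stretching of the Heegaard surface $\Sigma$ itself; the intermediate objects (ideal-matched and trimmed ideal-matched moduli spaces) and the fact that the output is a chain homotopy equivalence through an auxiliary model rather than a chain isomorphism are nontrivial parts of that argument that your sketch elides. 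Second, the idempotent condition on a pair $(\x_1,\x_2)$ contributing to $\BSDA(\HD_1)_{\Alg(\PMC_2)}\DT\lsup{\Alg(\PMC_2)}\BSDA(\HD_2)$ is that the right $\Alg(\PMC_2)$-idempotent of $\x_1$ equals the left idempotent of $\x_2$; tracing through the $\alpha$-arc bookkeeping this becomes the complementary-occupation condition for a generator of the glued diagram, but this is not the same as the ``complementary idempotents'' terminology used elsewhere (e.g., for $\BSDD$ of the identity cobordism), so the wording should be tightened. Neither point is a gap in the proposal; they are conventions and analytic details that would need to be nailed down.
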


As in the bordered case, one can reformulate the pairing theorem in
terms of morphism complexes~\cite{Zarev:JoinGlue}. Suppose $\mathcal{Y}_1$ and $\mathcal{Y}_2$ are bordered-sutured manifolds with bordered boundary $F(-\PMC)$. From $m(\mathcal{Y}_1)$ one gets a bordered-sutured manifold with bordered boundary $F(\PMC)$ by gluing the \emph{negative twisting slice} $\TW_{F(-\PMC),-}$. This negative twisting slice is a bordered-sutured cobordism from $\bar{F}(-\PMC)$ to $F(\PMC)$ obtained from $[0,1]\times F(\PMC)$ by making the sutures veer left, the minimum amount possible~\cite[Definition 2.8]{Zarev:JoinGlue}.

There is a particular $(\alpha,\beta)$-bordered Heegaard diagram
for the negative twisting slice $\TW_{F(-\PMC),-}$ with the property that
\begin{equation}\label{eq:BSAA-of-TW}
  \lsub{\Alg(\PMC)}\BSAA(\TW_{F(-\PMC),-})_{\Alg(\PMC)}\simeq \lsub{\Alg(\PMC)}\Alg(\PMC)_{\Alg(\PMC)}
\end{equation}
\cite[Proposition 3.12]{Zarev:JoinGlue}. Note that we always treat the $\beta$-boundary of $\TW_{F(-\PMC),-}$ as corresponding to the left action of $\BSAA(\TW_{F(-\PMC),-})$, while Zarev treats the $\beta$-boundary as the right action. This means the algebras that show up Formula~\eqref{eq:BSAA-of-TW} are the opposites of the algebras in Zarev's result.
The diagram $\TW_{F(-\PMC),-}$ was also discovered by
Auroux~\cite{Auroux10:Bordered} and, in other papers, has been denoted
$\AZ(-\PMC)$~\cite{LOTHomPair}.
Formula~(\ref{eq:BSD-dual}) and Theorem~\ref{thm:bordred-sut-pairing}
imply that
\[
\BSA(m(\mathcal{Y}_1)\cup\TW_{F(-\PMC),-})_{\Alg(\PMC)}\simeq\left(\BSD(\mathcal{Y}_1)^*\right)^{\Alg(\PMC)}\DT\lsub{\Alg(\PMC)}\Alg(\PMC)_{\Alg(\PMC)}.
\]

Recall that for any type $D$
structures $\lsup{\Alg(\PMC)}P$ and $\lsup{\Alg(\PMC)}Q$,
\[
  \Mor^{\Alg(\PMC)}(\lsup{\Alg(\PMC)}P,\lsup{\Alg(\PMC)}Q)=(P^*)^{\Alg(\PMC)}\DT\lsub{\Alg(\PMC)}\Alg(\PMC)_{\Alg(\PMC)}\DT \lsup{\Alg(\PMC)}Q
\]
\cite[Proposition 2.7]{LOTHomPair}.
So, applying the pairing theorem again, we obtain:
\begin{theorem}\label{thm:BS-hom-pair}\cite{Zarev:JoinGlue}
  Let $\mathcal{Y}_1=(Y_1,\Gamma_1)$ and $\mathcal{Y}_2=(Y_2,\Gamma_2)$ be bordered-sutured $3$-manifolds with bordered
  boundaries parametrized by $F(-\PMC)$. Then
  \[
    \Mor^{\Alg(\PMC)}(\lsup{\Alg(\PMC)}\BSD(\mathcal{Y}_1),\lsup{\Alg(\PMC)}\BSD(\mathcal{Y}_2))
    \simeq \SFC(m(\mathcal{Y}_1)\cup\TW_{F(-\PMC),-}\cup \mathcal{Y}_2).
  \]
\end{theorem}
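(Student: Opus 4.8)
The plan is to chase definitions, paralleling the proof of the morphism-complex pairing theorem in the closed bordered case~\cite{LOTHomPair}; every ingredient needed has already been assembled above. First I would expand the left-hand side using the identity
\[
  \Mor^{\Alg(\PMC)}(\lsup{\Alg(\PMC)}P,\lsup{\Alg(\PMC)}Q)=(P^*)^{\Alg(\PMC)}\DT\lsub{\Alg(\PMC)}\Alg(\PMC)_{\Alg(\PMC)}\DT \lsup{\Alg(\PMC)}Q
\]
with $P=\BSD(\mathcal{Y}_1)$ and $Q=\BSD(\mathcal{Y}_2)$, so that the claim reduces to identifying
\[
  (\BSD(\mathcal{Y}_1)^*)^{\Alg(\PMC)}\DT\lsub{\Alg(\PMC)}\Alg(\PMC)_{\Alg(\PMC)}\DT \lsup{\Alg(\PMC)}\BSD(\mathcal{Y}_2)
\]
with $\SFC(m(\mathcal{Y}_1)\cup\TW_{F(-\PMC),-}\cup \mathcal{Y}_2)$.

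Next I would feed in the two geometric inputs. By Formula~\eqref{eq:BSD-dual} the dual type $D$ structure $(\BSD(\mathcal{Y}_1)^*)^{\Alg(\PMC)}$ is $\BSD(m(\mathcal{Y}_1))^{\Alg(\PMC)}$, and by Formula~\eqref{eq:BSAA-of-TW} the bimodule $\lsub{\Alg(\PMC)}\Alg(\PMC)_{\Alg(\PMC)}$ is quasi-isomorphic to $\lsub{\Alg(\PMC)}\BSAA(\TW_{F(-\PMC),-})_{\Alg(\PMC)}$. Substituting these and using associativity of $\DT$, the middle expression becomes
\[
  \BSD(m(\mathcal{Y}_1))^{\Alg(\PMC)}\DT\lsub{\Alg(\PMC)}\BSAA(\TW_{F(-\PMC),-})_{\Alg(\PMC)}\DT \lsup{\Alg(\PMC)}\BSD(\mathcal{Y}_2),
\]
and, after translating between $\Alg(\PMC)$ and $\Alg(-\PMC)=\Alg(\PMC)^{\op}$ as in the discussion preceding Formula~\eqref{eq:BSD-dual}, each $\DT$ now pairs exactly one type $A$ action against one type $D$ action, so Theorem~\ref{thm:bordred-sut-pairing} applies. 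Applying it to the first $\DT$ identifies $\BSD(m(\mathcal{Y}_1))\DT\BSAA(\TW_{F(-\PMC),-})$ with $\BSA\bigl(m(\mathcal{Y}_1)\cup\TW_{F(-\PMC),-}\bigr)_{\Alg(\PMC)}$ — precisely the quasi-isomorphism displayed just before the theorem statement, the gluing being along $\bar F(-\PMC)$. Applying Theorem~\ref{thm:bordred-sut-pairing} a second time to the remaining $\DT$, and noting that gluing the $F(\PMC)$-end of the twisting slice to the $F(-\PMC)$-parametrized boundary of $\mathcal{Y}_2$ closes everything into a sutured manifold, identifies the result with $\SFC\bigl(m(\mathcal{Y}_1)\cup\TW_{F(-\PMC),-}\cup \mathcal{Y}_2\bigr)$, as desired.

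I do not expect a genuine obstacle: the mathematical content lives entirely in Formulas~\eqref{eq:BSD-dual} and~\eqref{eq:BSAA-of-TW} and in Theorem~\ref{thm:bordred-sut-pairing}, and what remains is bookkeeping. The one point that demands care is keeping the orientation and handedness conventions consistent — $F(-\PMC)$ versus $\bar F(-\PMC)$, left versus right type $D$ structures, and the $\op$ on the algebra — so that the three pieces $m(\mathcal{Y}_1)$, $\TW_{F(-\PMC),-}$, and $\mathcal{Y}_2$ really do glue along matching parametrized surfaces and the type $A$/type $D$ actions line up at each application of the pairing theorem; the remark following Formula~\eqref{eq:BSAA-of-TW}, about Zarev's opposite convention for the $\beta$-boundary of $\TW_{F(-\PMC),-}$, is exactly the sort of subtlety to watch. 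A secondary check is to confirm that it is the \emph{negative} twisting slice, rather than its positive counterpart, that makes Formula~\eqref{eq:BSAA-of-TW} hold with the bimodule $\lsub{\Alg(\PMC)}\Alg(\PMC)_{\Alg(\PMC)}$ on the nose, so that no stray boundary Dehn twist is introduced; this is settled by~\cite[Proposition 3.12]{Zarev:JoinGlue}.
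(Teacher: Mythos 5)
Your proposal is correct and follows essentially the same route the paper takes: expand $\Mor$ via \cite[Proposition~2.7]{LOTHomPair}, substitute using Formula~\eqref{eq:BSD-dual} and Formula~\eqref{eq:BSAA-of-TW}, then apply Theorem~\ref{thm:bordred-sut-pairing} twice. The paper merely organizes the same three ingredients in the opposite order (building up the $\BSA$ of $m(\mathcal{Y}_1)\cup\TW_{F(-\PMC),-}$ geometrically first, then wrapping with the $\Mor$ identity), so there is no substantive difference.
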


Given an arc diagram $\PMC$ and a component $C$ of $\bdy F(\PMC)$
there is a corresponding bordered-sutured cobordism $\tau_C$ from
$F(\PMC)$ to $F(\PMC)$, the \emph{Dehn twist along $C$}, obtained from
the identity bordered-sutured cobordism $[0,1]\times F(\PMC)$ by performing a right-veering Dehn twist on the sutures in $[0,1]\times C$.

Next, fix an arc diagram $\PMC$ so that $\bdy F(\PMC)$ has $2n$ components $C_1,\dots,C_{2n}$, which we think of as identified in pairs via $C_i\leftrightarrow C_{i+n}$. Let $\lsup{\Alg(\PMC)}\btau_{\Alg(\PMC)}$ be the type \DA\ bimodule corresponding to performing Dehn twists on exactly half of these components, by choosing one from each pair. That is, assuming that for each $i$ we have chosen $C_i$ from $\{C_i,C_{i+n}\}$ then
\[
\begin{split}
  \lsup{\Alg(\PMC)}\btau_{\Alg(\PMC)}&=\lsup{\Alg(\PMC)}\BSDA(\tau_{C_1}\circ \tau_{C_2}\circ\cdots\circ \tau_{C_n})_{\Alg(\PMC)}\\
  &\simeq \lsup{\Alg(\PMC)}\BSDA(\tau_{C_1})_{\Alg(\PMC)}\DT\cdots\DT \lsup{\Alg(\PMC)}\BSDA(\tau_{C_n})_{\Alg(\PMC)}.
  \end{split}
\]
Then, Theorem~\ref{thm:BS-hom-pair} has the following corollary:
\begin{corollary}\label{cor:doubling}
  Let $(Y,T)$ be a tangle and $\mathcal{Y}_\Gamma(T)$ be a bordered-sutured manifold corresponding to $(Y,T)$ as in Construction~\ref{con:tang-to-BS}. Let $D(Y,T)=-\mathcal{Y}_\Gamma(T)\cup\mathcal{Y}_\Gamma(T)$ be the \emph{double} of $\mathcal{Y}_{\Gamma}(T)$. (See also the beginning of Section~\ref{sec:tangles}.) Then
  \[
    \Mor^{\Alg(\PMC)}\bigl(\lsup{\Alg(\PMC)}\BSD(\mathcal{Y}_{\Gamma}(T)),\lsup{\Alg(\PMC)}\btau_{\Alg(\PMC)}\DT\lsup{\Alg(\PMC)}\BSD(\mathcal{Y}_{\Gamma}(T))\bigr)\simeq \SFC(D(Y,T)).
  \]
\end{corollary}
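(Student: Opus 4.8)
The plan is to deduce this by feeding the pairing theorem (Theorem~\ref{thm:bordred-sut-pairing}) into the bordered-sutured homotopy-pairing theorem (Theorem~\ref{thm:BS-hom-pair}) and then identifying the resulting glued-up sutured manifold with $D(Y,T)$. First, write $\mathcal{T}:=\tau_{C_1}\circ\tau_{C_2}\circ\cdots\circ\tau_{C_n}$ for the bordered-sutured cobordism from $F(\PMC)$ to $F(\PMC)$ defining $\btau$, so that $\lsup{\Alg(\PMC)}\BSDA(\mathcal{T})_{\Alg(\PMC)}=\lsup{\Alg(\PMC)}\btau_{\Alg(\PMC)}$. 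Gluing $\mathcal{T}$ onto $\mathcal{Y}_\Gamma(T)$ along the bordered boundary leaves the underlying $3$-manifold $Y(T)$ and the parametrization of its bordered boundary by $F(-\PMC)$ unchanged, only modifying the longitudinal sutures near the circles $C_1,\dots,C_n$; Theorem~\ref{thm:bordred-sut-pairing} then gives $\lsup{\Alg(\PMC)}\btau_{\Alg(\PMC)}\DT\lsup{\Alg(\PMC)}\BSD(\mathcal{Y}_\Gamma(T))\simeq\lsup{\Alg(\PMC)}\BSD(\mathcal{T}\cup\mathcal{Y}_\Gamma(T))$. Applying Theorem~\ref{thm:BS-hom-pair} with $\mathcal{Y}_1=\mathcal{Y}_\Gamma(T)$ and $\mathcal{Y}_2=\mathcal{T}\cup\mathcal{Y}_\Gamma(T)$ then identifies the left-hand side of the corollary with $\SFC\bigl(m(\mathcal{Y}_\Gamma(T))\cup\TW_{F(-\PMC),-}\cup\mathcal{T}\cup\mathcal{Y}_\Gamma(T)\bigr)$.

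It therefore suffices to produce a diffeomorphism of bordered-sutured manifolds
\[
m(\mathcal{Y}_\Gamma(T))\cup\TW_{F(-\PMC),-}\cup\mathcal{T}\ \cong\ -\mathcal{Y}_\Gamma(T),
\]
both sides having bordered boundary $F(\PMC)$; the right-hand side of the equivalence above then becomes $\SFC\bigl(-\mathcal{Y}_\Gamma(T)\cup\mathcal{Y}_\Gamma(T)\bigr)=\SFC(D(Y,T))$, proving the corollary. To see this, note that $m(\mathcal{Y}_\Gamma(T))$ and $-\mathcal{Y}_\Gamma(T)$ have the same oriented underlying $3$-manifold and differ only by interchanging $R_+$ and $R_-$, which along the bordered boundary is exactly the difference between the parametrizations of the reversed manifold by $\bar F(-\PMC)$ and by $F(\PMC)$. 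Gluing on the negative twisting slice $\TW_{F(-\PMC),-}$, whose sutures veer left by the minimal amount, re-expresses the $\bar F(-\PMC)$-parametrization as an $F(\PMC)$-parametrization at the cost of half a boundary Dehn twist's worth of veering in the sutures running along the annulus $\bdy\nbd(T_i)$ that joins each pair $C_i\leftrightarrow C_{i+n}$; composing with the right-veering Dehn twists $\tau_{C_1},\dots,\tau_{C_n}$, one near each such pair, straightens those sutures back into the longitudinal sutures of $\mathcal{Y}_\Gamma(T)$. This is a local check in a collar of the sutured boundary near the bordered boundary, carried out annulus by annulus; away from that collar the pieces $m(\mathcal{Y}_\Gamma(T))$, $\TW_{F(-\PMC),-}$, and $\mathcal{T}$ are products and contribute nothing.

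I expect this last geometric identification to be the only real work: one must match the ``minimal left-veering'' normalization built into $\TW_{F(-\PMC),-}$ against the full right-veering twists $\tau_{C_i}$ so that no residual twisting survives and one recovers $-\mathcal{Y}_\Gamma(T)$ with precisely the chosen longitudinal sutures $\Gamma$ (not merely up to the ambiguity in them), and one must confirm that the correct count is one twist on each of the $n$ identified pairs rather than on all $2n$ boundary circles. This is the bordered-sutured incarnation of the $\AZ$-doubling identity familiar from bordered Floer homology; granting it, the corollary is a formal consequence of Theorems~\ref{thm:bordred-sut-pairing} and~\ref{thm:BS-hom-pair}.
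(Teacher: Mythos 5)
Your proposal is correct and takes essentially the same route as the paper: apply Theorem~\ref{thm:BS-hom-pair} (after absorbing $\btau$ into $\mathcal{Y}_2$ via Theorem~\ref{thm:bordred-sut-pairing}) and then observe that the negative twisting slice contributes a negative half boundary Dehn twist at each tangle endpoint --- hence a negative full Dehn twist on each doubled component --- which the positive twists in $\btau$ cancel. The one cosmetic difference is that you regroup the pieces so as to identify $m(\mathcal{Y}_\Gamma(T))\cup\TW_{F(-\PMC),-}\cup\mathcal{T}$ with $-\mathcal{Y}_\Gamma(T)$ \emph{before} the final gluing, whereas the paper performs the twist cancellation directly in the closed sutured manifold $D(Y,T)$, where $C_i$ and $C_{i+n}$ become isotopic meridians of the doubled component and the $\pm\frac12$-shifts visibly add to zero; working in the doubled picture sidesteps the small bookkeeping hassle you flag of recovering $\Gamma$ on the nose (rather than merely up to the built-in ambiguity in longitudinal sutures), and makes the $n$-versus-$2n$ count immediate.
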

\begin{proof}
  The negative twisting slice introduces a negative half boundary Dehn twist at each endpoint of the tangle, and hence a negative Dehn twist on each component of the doubled tangle. The bimodule $\btau$ undoes these Dehn twists.
\end{proof}

\subsubsection{Twisted coefficients}
Given a bordered $3$-manifold $(Y,\phi\co F(\PMC)\to\bdy Y)$, there are \emph{totally twisted bordered Floer modules} $\tCFAa(Y,\phi)_{\Alg(\PMC)}$ and $\lsup{\Alg(-\PMC)}\tCFDa(Y,\phi)$, which are equipped with free actions of $\Ftwo[H_2(Y,\bdy Y)]$ commuting with the structure maps over $\Alg(\PMC)$ and $\Alg(-\PMC)$, respectively~\cite[Sections 6.4 and 7.4]{LOT1}. So, given an $\Ftwo[H_2(Y,\bdy Y)]$-module $M$, we can extend scalars and define
\begin{align*}
  \tCFAa(Y,\phi;M)&=\tCFAa(Y,\phi)\otimes_{\Ftwo[H_2(Y,\bdy Y)]}M\\
  \tCFDa(Y,\phi;M)&=\tCFDa(Y,\phi)\otimes_{\Ftwo[H_2(Y,\bdy Y)]}M.
\end{align*}
Then, $\tCFAa(Y,\phi;M)$ is an $\Ainf$-module over the \dg algebra
$\Alg(\PMC)\otimes_{\Ftwo}\Ftwo[H_2(Y,\bdy Y)]$ over the ground ring
$\Idem(\PMC)\otimes_{\Ftwo}\Ftwo[H_2(Y,\bdy Y)]$, and
$\tCFDa(Y,\phi;M)$ is a type $D$ structure over $\Alg(-\PMC)$ with an
action of $\Ftwo[H_2(Y,\bdy Y)]$ commuting with the type $D$ operation
$\delta^1$ (a very special case of a type \DA\ bimodule).

Suppose that $Y_0$ and $Y_1$ are bordered $3$-manifolds, $Y=Y_0\cup_\bdy Y_1$, and $F=\bdy Y_0\subset Y$. Then there is an abelian group homomorphism $H_2(Y)\to H_2(Y,F)\cong H_2(Y_0,\bdy Y_0)\oplus H_2(Y_1,\bdy Y_1)$, which induces a ring homomorphism $i\co \Ftwo[H_2(Y)]\to \Ftwo[H_2(Y,F)]$. In particular, via $i$ we can regard $\Ftwo[H_2(Y,F)]$ as an $\Ftwo[H_2(Y)]$-module and so use $\Ftwo[H_2(Y,F)]$ as a coefficient system for $\tCFa(Y)$. 

The following is the totally twisted pairing theorem:
\begin{theorem}\cite[Theorem 9.44]{LOT1}\label{thm:totally-tw-pair}
  Given bordered $3$-manifolds $Y_0$ and $Y_1$ with boundaries
  $F=F(\PMC)$ and $-F(\PMC)$, respectively, there is a homotopy
  equivalence of differential $\Ftwo[H_2(Y,F)]$-modules
  \[
    \tCFAa(Y_0)_{\Alg(\PMC)}\DT\lsup{\Alg(\PMC)}\tCFDa(Y_1)\simeq \tCFa\bigl(Y;\Ftwo[H_2(Y,F)]\bigr).
  \]
\end{theorem}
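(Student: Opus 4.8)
The plan is to parallel the proof of the untwisted pairing theorem, \cite[Theorem 1.2]{LOT1} (proved in \cite[Chapters 7--9]{LOT1}), carrying group-ring coefficients along. First I would choose a provincially admissible bordered Heegaard diagram $\HD_0$ for $Y_0$ and an admissible one $\HD_1$ for $Y_1$, both inducing the pointed matched circle $\PMC$ on the boundary, and glue them along $\{1\}\times Z$ to obtain a weakly admissible Heegaard diagram $\HD=\HD_0\cup_{\PMC}\HD_1$ for $Y=Y_0\cup_\bdy Y_1$; one may need to wind transverse to the gluing region both to achieve admissibility and to ensure the twisted structure maps are well defined (finite sums in the relevant group ring, or sums in its completion). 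Recall that the untwisted pairing theorem is proved by a neck-stretching argument along the gluing circle: as the neck length tends to $\infty$, the moduli spaces of index-$1$ holomorphic curves in $\HD$ contributing to the differential on $\CFa(Y)$ converge to fibered products of moduli spaces in $\HD_0$ (with east punctures asymptotic to a sequence of Reeb chords) and moduli spaces in $\HD_1$ (with west punctures asymptotic to the same sequence), matched along the boundary, and the mod-$2$ count of these matched configurations is precisely the definition of $\CFAa(Y_0)\DT\CFDa(Y_1)$. This produces a chain homotopy equivalence $\CFa(Y)\simeq\CFAa(Y_0)\DT\CFDa(Y_1)$ together with an explicit homotopy inverse and chain homotopies built from the same moduli spaces.

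To upgrade this to totally twisted coefficients, recall that the totally twisted invariants are obtained from the untwisted ones by decorating each term of each $A_\infty$ operation (respectively, each term of the type $D$ structure map) by the group-ring element $e^{[B]}$, where $B$ is the domain of the curve being counted and $[B]$ is its class in $H_2(Y_0,\bdy Y_0)$ (respectively $H_2(Y_1,\bdy Y_1)$); here one uses that in a bordered Heegaard diagram a domain with a given source, target, and sequence of asymptotic Reeb chords is well defined modulo the group of periodic domains, which is canonically $H_2(Y_i,\bdy Y_i)$. Likewise, the differential on $\tCFa(Y;\Ftwo[H_2(Y,F)])$ decorates the count of a curve with domain $B$ by $e^{i_*[B]}$, where $i_*\co H_2(Y)\to H_2(Y,F)\cong H_2(Y_0,\bdy Y_0)\oplus H_2(Y_1,\bdy Y_1)$. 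The one genuinely new ingredient is a homological bookkeeping lemma: if, under neck-stretching, a domain $B$ in $\HD$ degenerates to domains $B_0$ in $\HD_0$ and $B_1$ in $\HD_1$ matched along $F$, then $i_*[B]=([B_0],[B_1])$. This is a Mayer--Vietoris computation for the decomposition of $Y$ along $F$ (using that the collar of $F$ contributes nothing), and it shows that the moduli-space identification furnished by the untwisted proof is compatible with the decorations: a matched pair $(B_0,B_1)$ contributes $e^{[B_0]}\otimes e^{[B_1]}=e^{([B_0],[B_1])}$ to $\CFAa(Y_0)\DT\CFDa(Y_1)$ over $\Ftwo[H_2(Y_0,\bdy Y_0)]\otimes_{\Ftwo}\Ftwo[H_2(Y_1,\bdy Y_1)]\cong\Ftwo[H_2(Y,F)]$, while the corresponding curve in $\HD$ contributes $e^{i_*[B]}$ to $\tCFa(Y;\Ftwo[H_2(Y,F)])$, and these agree.

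With the lemma in hand, the chain map, homotopy inverse, and chain homotopies from the untwisted proof all lift verbatim to $\Ftwo[H_2(Y,F)]$-linear maps, since the group-ring decorations are multiplicative under gluing and composition of curves, so every cancellation and every homotopy identity in the untwisted argument holds decoration-by-decoration. One also checks that the isomorphism $H_2(Y,F)\cong H_2(Y_0,\bdy Y_0)\oplus H_2(Y_1,\bdy Y_1)$ is the one used to define the module structures on both sides, and that $\Ftwo[H_2(Y,F)]$ is regarded as an $\Ftwo[H_2(Y)]$-module exactly via restriction of scalars along $i$ — which is how $\tCFa(Y;\Ftwo[H_2(Y,F)])$ is defined in the first place — so there is no mismatch of coefficient systems. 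Concatenating gives the desired homotopy equivalence of differential $\Ftwo[H_2(Y,F)]$-modules.

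I expect the main obstacle to be the analytic backbone — the neck-stretching compactness and transversality statements underlying the untwisted pairing theorem — but since that theorem may be assumed, the substantive new work is the Mayer--Vietoris lemma identifying $i_*[B]$ with $([B_0],[B_1])$ and the (routine but fiddly) verification that every structure map respects the $\Ftwo[H_2(Y,F)]$-action. The subtlety to watch is the correct identification of periodic domains in $\HD$ with pairs of periodic domains in $\HD_0$ and $\HD_1$: one must check that a periodic domain of $\HD$ restricts to a periodic domain on each side and that this restriction realizes the algebraic isomorphism $H_2(Y,F)\cong H_2(Y_0,\bdy Y_0)\oplus H_2(Y_1,\bdy Y_1)$, so that "decorate by the class of the domain" means the same thing before and after gluing.
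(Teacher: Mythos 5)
The paper does not give its own proof of this statement; it is quoted directly from \cite[Theorem 9.44]{LOT1}, so there is no in-paper argument to compare against. Your sketch is consistent with the structure of the proof in \cite{LOT1}: re-run the degeneration argument underlying the untwisted pairing theorem while decorating each curve count by the homology class of its domain, and verify that a glued domain's class in $H_2(Y,F)$ matches, under the excision isomorphism $H_2(Y,F)\cong H_2(Y_0,\bdy Y_0)\oplus H_2(Y_1,\bdy Y_1)$, the pair of classes of the restricted domains; since both sides are free $\Ftwo[H_2(Y,F)]$-modules on the same generating sets and all maps are defined on generators, this compatibility of decorations is exactly what gives $\Ftwo[H_2(Y,F)]$-linearity of the equivalence. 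Two small remarks. First, the untwisted pairing theorem in \cite{LOT1} involves not only neck-stretching but also a further ``time dilation'' deformation that converts the matched moduli spaces into the configurations counted by the algebraic $\DT$; the compatibility check you describe must be carried out across all the families of moduli spaces appearing in the chain map, its homotopy inverse, and the homotopies, not just the limiting fibered products, so this is where the bulk of your ``routine but fiddly'' verification lives. Second, domains in a bordered Heegaard diagram carry classes in $H_2$ only after quotienting by periodic domains (canonically $H_2(Y_i,\bdy Y_i)$), so the identification of periodic domains of $\HD$ with pairs of periodic domains in $\HD_0$ and $\HD_1$ is precisely what makes ``decorate by the class of the domain'' mean the same thing before and after gluing; you flag this at the end, and it is the correct subtlety to watch.
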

(The reference refines this slightly to keep track of $\SpinC$-structures.)

From this we can deduce a general twisted-coefficient pairing theorem:
\begin{corollary}\label{cor:twisted-pairing}
  Fix bordered $3$-manifolds $Y_0$ and $Y_1$ with boundaries $F(\PMC)$
  and $-F(\PMC)$. Let $Y=Y_0\cup_\bdy Y_1$ and $F=\bdy Y_0$. Given
  modules $M$ over $\Ftwo[H_2(Y_0,\bdy Y_0)]$ and $N$ over
  $\Ftwo[H_2(Y_1,\bdy Y_1)]$ there is a homotopy equivalence
  \begin{equation}\label{eq:twisted-pairing}
    \tCFAa(Y_0;M)_{\Alg(\PMC)}\DT\lsup{\Alg(\PMC)}\tCFDa(Y_1;N)\simeq \tCFa(Y;M\otimes_{\Ftwo} N).
  \end{equation}
  of differential modules over
  $\Ftwo[H_2(Y,F)]=\Ftwo[H_2(Y_0,\bdy Y_0)]\otimes\Ftwo[H_2(Y_1,\bdy Y_1)]$.
\end{corollary}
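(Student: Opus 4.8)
The plan is to deduce this from the totally twisted pairing theorem, Theorem~\ref{thm:totally-tw-pair}, by base change in the ``spectator'' group-ring variables. Write $R_0=\Ftwo[H_2(Y_0,\bdy Y_0)]$ and $S_1=\Ftwo[H_2(Y_1,\bdy Y_1)]$, so that by definition $\tCFAa(Y_0;M)=\tCFAa(Y_0)\otimes_{R_0}M$ and $\tCFDa(Y_1;N)=\tCFDa(Y_1)\otimes_{S_1}N$. Recall that the box tensor product $\tCFAa(Y_0)\DT\tCFDa(Y_1)$ carries a commuting free action of $R_0\otimes_{\Ftwo}S_1$, which is identified with $\Ftwo[H_2(Y,F)]$ via $H_2(Y,F)\cong H_2(Y_0,\bdy Y_0)\oplus H_2(Y_1,\bdy Y_1)$, and that under this identification Theorem~\ref{thm:totally-tw-pair} provides an $\Ftwo[H_2(Y,F)]$-equivariant homotopy equivalence $\tCFAa(Y_0)\DT\tCFDa(Y_1)\simeq \tCFa(Y;\Ftwo[H_2(Y,F)])$.

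The one genuinely needed ingredient is a compatibility of $\DT$ with extension of scalars: for any right $\Ainf$-module $P$ over $\Alg(\PMC)$ with a commuting $R_0$-action, any type $D$ structure $Q$ over $\Alg(\PMC)$ with a commuting $S_1$-action, any $R_0$-module $M$, and any $S_1$-module $N$, I claim there is a natural isomorphism
\[
  (P\otimes_{R_0}M)\DT(Q\otimes_{S_1}N)\;\cong\;(P\DT Q)\otimes_{R_0\otimes_{\Ftwo}S_1}(M\otimes_{\Ftwo}N)
\]
of differential $R_0\otimes_{\Ftwo}S_1$-modules. This is pure bookkeeping: the underlying $\Idem(\PMC)$-module of $P\DT Q$ is $P\otimes_{\Idem(\PMC)}Q$, and since the $R_0$- and $S_1$-actions commute with the idempotent actions and with every $\Ainf$-operation $m_i$ of $P$ and with $\delta^1$ of $Q$, one obtains $(P\otimes_{R_0}M)\otimes_{\Idem(\PMC)}(Q\otimes_{S_1}N)\cong (P\otimes_{\Idem(\PMC)}Q)\otimes_{R_0\otimes_{\Ftwo}S_1}(M\otimes_{\Ftwo}N)$ from associativity and commutativity of $\otimes_{\Ftwo}$, and the box differential---built from $m_{i+1}$ applied to iterates of $\delta^1$---is carried to the box differential on the right, now viewed as $R_0\otimes_{\Ftwo}S_1$-linear. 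The boundedness hypothesis making the sum defining $\DT$ finite is inherited from the admissible diagrams used to define the twisted invariants, so this causes no trouble.

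Granting the lemma, I would apply it with $P=\tCFAa(Y_0)$ and $Q=\tCFDa(Y_1)$ to get an isomorphism $\tCFAa(Y_0;M)\DT\tCFDa(Y_1;N)\cong\bigl(\tCFAa(Y_0)\DT\tCFDa(Y_1)\bigr)\otimes_{\Ftwo[H_2(Y,F)]}(M\otimes_{\Ftwo}N)$. Since $-\otimes_{\Ftwo[H_2(Y,F)]}(M\otimes_{\Ftwo}N)$ is an additive functor, it carries the $\Ftwo[H_2(Y,F)]$-equivariant homotopy equivalence of Theorem~\ref{thm:totally-tw-pair}---chain homotopies and all---to a homotopy equivalence with $\tCFa(Y;\Ftwo[H_2(Y,F)])\otimes_{\Ftwo[H_2(Y,F)]}(M\otimes_{\Ftwo}N)$. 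Finally, since $\tCFa(Y;\Ftwo[H_2(Y,F)])=\tCFa(Y;\Ftwo[H_2(Y)])\otimes_{\Ftwo[H_2(Y)]}\Ftwo[H_2(Y,F)]$ by definition, transitivity of base change along $i\co\Ftwo[H_2(Y)]\to\Ftwo[H_2(Y,F)]$ identifies this with $\tCFa(Y;\Ftwo[H_2(Y)])\otimes_{\Ftwo[H_2(Y)]}(M\otimes_{\Ftwo}N)=\tCFa(Y;M\otimes_{\Ftwo}N)$, still as an $\Ftwo[H_2(Y,F)]$-module; composing the equivalences gives~\eqref{eq:twisted-pairing}.

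The main obstacle is just carefully verifying the lemma of the second paragraph, which amounts to unwinding the definition of $\DT$ and of the twisted module structures of~\cite[Sections 6.4 and 7.4]{LOT1}; it is routine but should be written out. The only subtle point in the rest of the argument is that the equivalence of Theorem~\ref{thm:totally-tw-pair} is a homotopy equivalence of complexes of $\Ftwo[H_2(Y,F)]$-modules, with $\Ftwo[H_2(Y,F)]$-linear maps and homotopies, which is precisely what allows base change to preserve it with no flatness or freeness hypothesis on $M\otimes_{\Ftwo}N$.
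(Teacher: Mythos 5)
Your proof is correct and takes essentially the same approach as the paper: rewrite both sides as $\bigl(\tCFAa(Y_0)\DT\tCFDa(Y_1)\bigr)\otimes_{\Ftwo[H_2(Y,F)]}(M\otimes_{\Ftwo}N)$ and $\tCFa(Y;\Ftwo[H_2(Y,F)])\otimes_{\Ftwo[H_2(Y,F)]}(M\otimes_{\Ftwo}N)$ via compatibility of $\DT$ with extension of scalars and transitivity of base change, then apply Theorem~\ref{thm:totally-tw-pair}. Your write-up is somewhat more explicit about the commutation of $\DT$ with base change and about why additivity suffices to transport the homotopy equivalence, but the decomposition and the key observations are identical to the paper's.
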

\begin{proof}
  The left side of Equation~\eqref{eq:twisted-pairing} is
  \begin{align*}
    \bigl(\tCFAa(Y_0)&\otimes_{\Ftwo[H_2(Y_0,\bdy Y_0)]} M\bigr)\DT\bigl(\tCFDa(Y_1)\otimes_{\Ftwo[H_2(Y_1,\bdy Y_1)]}N\bigr)\\
    &=\bigl(\tCFAa(Y_0)\DT \tCFDa(Y_1)\bigr)\otimes_{\Ftwo[H_2(Y_0,\bdy Y_0)]\otimes\Ftwo[H_2(Y_1,\bdy Y_1)]}(M\otimes_{\Ftwo} N)
  \end{align*}
  while the right side is
  \begin{align*}
    \tCFa\bigl(Y;&\Ftwo[H_2(Y)]\bigr)\otimes_{\Ftwo[H_2(Y)]}(M\otimes_{\Ftwo} N)\\
                 &=\tCFa\bigl(Y;\Ftwo[H_2(Y)]\bigr)\otimes_{\Ftwo[H_2(Y)]}\Ftwo[H_2(Y,F)]\otimes_{\Ftwo[H_2(Y,F)]}(M\otimes_{\Ftwo} N)\\
                 &=\tCFa\bigl(Y;\Ftwo[H_2(Y,F)]\bigr)\otimes_{\Ftwo[H_2(Y,F)]}(M\otimes_{\Ftwo} N).
  \end{align*}
  So, the isomorphism follows from Theorem~\ref{thm:totally-tw-pair}.
\end{proof}

\section{Non-vanishing results}\label{sec:non-vanishing}
Fix a field $\Field$. Except as indicated, in this
section $\Lambda$ denotes the universal Novikov field over $\Field$ and Floer
homology groups have coefficients in $\Field$ or modules over $\Field$.

The goal of this section is to deduce Theorem~\ref{thm:detect-S2} from a non-vanishing theorem of Ni's, specifically:
\begin{theorem}\cite[Theorem 3.6]{Ni:spheres}\label{Ni:non-vanish}
  Suppose $Y$ is a closed, irreducible $3$-manifold and $F$ a closed surface
  in $Y$ such that $F$ is Thurston-norm minimizing and no subsurface of $F$ is nullhomologous.
  Then there exists a nonempty open set $U\subset H_1(Y;\RR)$ so that for any $\omega\in U$,
  \[
    \tHF^+(Y,\OneHalf x(F);\Lambda_\omega)\neq 0 \qquad\text{and}\qquad \tHFa(Y,\OneHalf x(F);\Lambda_\omega)\neq 0.
  \]
  (Here, $x([F])$ is the Thurston norm of the homology class of $F$,
  and the notation $\OneHalf x(F)$ indicates a certain set of
  $\SpinC$-structures.)
\end{theorem}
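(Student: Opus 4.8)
The plan is to exploit the fact that the three hypotheses on $F$ are precisely those needed to promote $F$ to a leaf of a taut foliation of $Y$, and then feed that foliation into the non-vanishing theorem for the twisted contact invariant. First I would invoke Gabai's theorem: since $Y$ is irreducible and $F$ is Thurston-norm minimizing, incompressible, and has no nullhomologous subsurface, the sutured manifold $Y\setminus\nbd(F)$ (with $R_\pm$ the two copies of $F$, no sutures) is taut — this is exactly where the ``no nullhomologous subsurface'' hypothesis is used — so Gabai's machinery produces a coorientable $C^0$ taut foliation $\mathscr F$ of $Y$ having $F$ as a compact leaf. Next I would identify the $\SpinC$ structure $\mathfrak s_{\mathscr F}$ of the oriented $2$-plane field $T\mathscr F$: restricting to the leaf, $T\mathscr F|_F = TF$, so $\langle c_1(\mathfrak s_{\mathscr F}),[F]\rangle=\chi(F)=-x([F])$ (for $F$ norm-minimizing of genus $\ge 1$). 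Thus $\mathfrak s_{\mathscr F}$ is one of the extremal $\SpinC$ structures singled out by $\OneHalf x(F)$.

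Now I would perturb. By Eliashberg--Thurston (together with Eliashberg's and Etnyre's refinements), $\mathscr F$ is approximated by a contact structure $\xi$ that is $C^0$-close to $\mathscr F$, hence has $\mathfrak s_\xi=\mathfrak s_{\mathscr F}$, and is weakly symplectically fillable by some $(W,\Omega)$. Set $\omega=[\Omega|_Y]\in H^2(Y;\RR)\cong H_1(Y;\RR)$. Ozsv\'ath--Szab\'o's non-vanishing theorem for the contact invariant of a weakly fillable contact structure, with coefficients twisted by the filling, gives that $c(\xi)\neq 0$ in $\tHF^+(-Y,\mathfrak s_\xi;\Lambda_\omega)$; in particular that group, and hence by conjugation symmetry $\tHF^+(Y,\mathfrak s_{\mathscr F};\Lambda_\omega)$, is nonzero. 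Since $\mathfrak s_{\mathscr F}$ lies in the grouping $\OneHalf x(F)$, this already yields $\tHF^+(Y,\OneHalf x(F);\Lambda_\omega)\neq 0$ for this particular $\omega$.

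To get the nonempty open set $U$, I would deform $\Omega$ (keeping it symplectic and weakly dominating $\xi$ along $Y$): the resulting cohomology classes $[\Omega|_Y]$ sweep out a nonempty open set, and non-vanishing of the twisted contact class persists throughout, so $\tHF^+(Y,\OneHalf x(F);\Lambda_\omega)\neq 0$ for all $\omega$ in this $U$. To pass from $\tHF^+$ to $\tHFa$, observe that when $g(F)\ge 2$ the $\SpinC$ structures in the grouping $\OneHalf x(F)$ are non-torsion (their $c_1$ pairs nontrivially with $[F]$), so $\tHF^\infty$ vanishes there, $U$ acts nilpotently on $\tHF^+$, and the exact triangle relating $\tHFa$, $\tHF^+$ and the $U$-map forces $\tHFa(Y,\OneHalf x(F);\Lambda_\omega)\neq 0$; the genus-one case (where the grouping may contain torsion structures) follows from the same triangle together with the structure of $\tHF^\infty$. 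A variant staying inside sutured Floer homology is also available: one arranges a balanced sutured manifold out of $Y\setminus\nbd(F)$ that is taut (again using the hypotheses), so $\SFH\neq 0$ by Juh\'asz's Theorem~\ref{thm:nonvanishSFH}, and identifies it with the extremal-$\SpinC$ twisted group of $Y$.

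The main obstacle is not a single deep input but the bookkeeping tying these inputs to the precise statement. One must pin down orientation and $\SpinC$ conventions carefully enough to be sure the non-vanishing produced by the foliation/contact machinery lands in the grouping $\OneHalf x(F)$ and not an adjacent one, and that the twisting class is the asserted $\omega$; and one must verify that the Eliashberg--Thurston and weak-filling constructions are flexible enough in cohomology to yield a genuinely open set $U$ (for the downstream application to detecting $2$-spheres, enough such sets as $F$ varies). If one instead runs the sutured argument, the difficulty migrates to the identification step: because $F$ is a \emph{closed} surface, literally cutting $Y$ along it does not give a balanced sutured manifold, so one must introduce auxiliary sutures (or a product region) without destroying tautness and then prove the resulting $\SFH$, with appropriate twisted coefficients, computes $\tHF^+(Y,\OneHalf x(F);\Lambda_\omega)$.
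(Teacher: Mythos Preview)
This theorem is \emph{cited}, not proved, in the paper: it is quoted verbatim as \cite[Theorem 3.6]{Ni:spheres}, and the only comment the paper adds is the parenthetical remark that Ni's argument, stated over $\RR$, extends to arbitrary ground fields by universal coefficients. There is therefore no proof in the paper to compare your proposal against.

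That said, your outline is essentially the strategy of Ni's original argument: Gabai's theorem produces a taut foliation with $F$ as a leaf, Eliashberg--Thurston perturbs it to a weakly semi-fillable contact structure, and the Ozsv\'ath--Szab\'o nonvanishing theorem for the twisted contact invariant then gives the nonvanishing of $\tHF^+$ in the extremal $\SpinC$ grouping. The passage to an open set $U$ and the deduction for $\tHFa$ are also handled along the lines you sketch. So your proposal is on the right track as a reconstruction of Ni's proof, but it is not something this paper itself undertakes.
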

(Ni proves Theorem~\ref{Ni:non-vanish} for $\Field=\RR$, but the result for general ground fields $\Field$ is then immediate from the universal coefficients theorem.)

\begin{corollary}\label{cor:Ni-nonvanish}
Let $Y$ be a closed, orientable $3$-manifold with no $S^1\times S^2$ connected summands. Then, there exists a non-empty, open set of closed $2$-forms $U\subset \Omega^2(Y)$ such that for any $\omega\in U$,
$$\tHFa(Y;\Lambda_{\omega})\neq 0.$$   
\end{corollary}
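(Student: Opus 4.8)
The plan is to reduce, via the prime decomposition and a Künneth argument, to the case where $Y$ is irreducible, and then to apply Ni's non-vanishing theorem (Theorem~\ref{Ni:non-vanish}) to a carefully chosen Thurston-norm minimizing surface.

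Write $Y\cong Y_1\#\cdots\#Y_n$ as a connected sum of prime $3$-manifolds. Since $Y$ has no $S^1\times S^2$ summands, each $Y_i$ is irreducible. Under the splitting $H^2(Y;\RR)\cong\bigoplus_i H^2(Y_i;\RR)$ induced by the connected sum, every closed $2$-form on $Y$ is cohomologous to one of the form $\omega=\omega_1\#\cdots\#\omega_n$ with $\omega_i\in\Omega^2(Y_i)$. Since $\tHFa(Y;\Lambda_\omega)$ depends only on $[\omega]$, the iterated Künneth formula of Lemma~\ref{lem:Novikov-Kunneth} gives
\[
  \tHFa(Y;\Lambda_\omega)\cong\tHFa(Y_1;\Lambda_{\omega_1})\otimes_\Lambda\cdots\otimes_\Lambda\tHFa(Y_n;\Lambda_{\omega_n}),
\]
and a tensor product of $\Lambda$-vector spaces is nonzero exactly when every factor is. So it is enough to produce, for each $i$, a nonempty open set $U_i'\subset H^2(Y_i;\RR)$ with $\tHFa(Y_i;\Lambda_{\omega_i})\neq 0$ whenever $[\omega_i]\in U_i'$; the required $U$ is then the set of closed $2$-forms on $Y$ whose cohomology class lies in $\bigoplus_i U_i'$, which is the preimage of an open nonempty set under the continuous surjection $\omega\mapsto[\omega]$.

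For the indices $i$ with $b_1(Y_i)=0$ we have $H^2(Y_i;\RR)=0$, so $[\omega_i]=0$ and $\tHFa(Y_i;\Lambda_{\omega_i})\cong\HFa(Y_i)\otimes_\Field\Lambda$, which is nonzero by Theorem~\ref{thm:HF-nonvanish}; here $U_i'=H^2(Y_i;\RR)$. For the indices $i$ with $b_1(Y_i)>0$ we apply Ni's theorem: here $H_2(Y_i;\ZZ)$ is free of positive rank, so choose a nonzero class $\alpha$ and represent it by a Thurston-norm minimizing surface $F$. Since $Y_i$ is irreducible, $F$ may be taken without sphere components, and a norm-minimizing surface without sphere components in an irreducible $3$-manifold is incompressible. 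Discarding from $F$ any union of components whose total homology class vanishes, we obtain a surface that still represents $\alpha\neq 0$, is still norm-minimizing and incompressible, and now has no nullhomologous union of components. This surface satisfies the three hypotheses of Theorem~\ref{Ni:non-vanish}, which provides a nonempty open $U_i'\subset H_1(Y_i;\RR)\cong H^2(Y_i;\RR)$ with $\tHFa(Y_i,\OneHalf x([F]);\Lambda_{\omega_i})\neq 0$ for $[\omega_i]\in U_i'$; as this is a direct summand of $\tHFa(Y_i;\Lambda_{\omega_i})$, the latter is nonzero as well.

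I expect the main obstacle to be the surface-theoretic step for the pieces with $b_1>0$, namely arranging a single surface $F$ that is simultaneously norm-minimizing, incompressible, and free of nullhomologous sub-unions of components; this is standard $3$-manifold topology but must be done with some care. The remaining ingredients (prime decomposition, the splitting of $H^2$ under connected sum, the Künneth formula, and continuity/surjectivity of the period map) are routine bookkeeping. As a remark, the conclusion can be strengthened to hold for \emph{all} closed $2$-forms: generic forms are dense, since genericity of $[\omega]$ amounts to avoiding countably many hyperplanes in $H^2(Y;\RR)$, so the open set $U$ contains a generic form, and Corollary~\ref{HFdim-Novikov} then forces $\dim_{\Lambda_{\omega'}}\tHFa(Y;\Lambda_{\omega'})$ to be positive for every $\omega'$; we record only the weaker statement here since it is all that is needed below.
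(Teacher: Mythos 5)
Your proposal follows essentially the same route as the paper: decompose $Y$ into irreducible prime summands, reduce via the Künneth formula (Lemma~\ref{lem:Novikov-Kunneth}) to a single irreducible piece, handle the $b_1=0$ pieces by a non-vanishing Euler characteristic, and for $b_1>0$ feed a norm-minimizing incompressible surface with no nullhomologous subsurface into Ni's Theorem~\ref{Ni:non-vanish}.

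The one place to be careful is the surface-theoretic step. You assert that ``a norm-minimizing surface without sphere components in an irreducible $3$-manifold is incompressible,'' and only afterwards discard nullhomologous sub-unions. This intermediate claim is not quite right: a norm-minimizing surface may contain a compressible torus component (e.g.\ the boundary of an unknotted solid torus), since tori contribute $0$ to the norm. The correct order of operations is: remove sphere components, discard nullhomologous sub-unions (iterating as needed), and \emph{then} argue incompressibility --- a compressible genus-$\geq 2$ component would violate norm-minimality, and a compressible torus component would compress to a sphere bounding a ball and hence be nullhomologous, which you have already excluded. The paper sidesteps this bookkeeping by choosing from the outset a \emph{connected} norm-minimizing surface $F$ representing a nonzero class (which exists, by minimizing complexity over all connected surfaces with nonzero class); connectedness then makes the ``no nullhomologous subsurface'' hypothesis automatic and the incompressibility argument immediate. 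Your version works once the order is fixed, but the paper's connected-surface choice is cleaner. Your closing remark that the conclusion upgrades to all $2$-forms via density of generic forms and Corollary~\ref{HFdim-Novikov} is correct, and is exactly the content of the ``only if'' direction of Theorem~\ref{thm:detect-S2}, which is proved right after this corollary.
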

\begin{proof}
  Write $Y=Y_1\#\cdots\# Y_n$ where each $Y_i$ is irreducible. By the
  K\"unneth theorem (Lemma~\ref{lem:Novikov-Kunneth}), it suffices to
  prove the result for each $Y_i$. If $b_1(Y_i)=0$ then
  $\tHFa(Y_i;\Lambda_{\omega_i})=\HFa(Y_i)\otimes_{\Field} \Lambda$,
  so $\chi(\tHFa(Y_i;\Lambda_{\omega_i}))=|H_1(Y_i)|\neq
  0$~\cite[Proposition 5.1]{OS04:HolDiskProperties}. If $b_1(Y_i)>0$
  then choose a Thurston-norm minimizing, connected surface $F$ in
  $Y_i$ which represents some non-zero class in $H_2(Y_i)$. 
  Theorem~\ref{Ni:non-vanish} implies that $0\neq \tHFa(Y_i,\OneHalf x(F);\Lambda_\omega)\subset \tHFa(Y_i;\Lambda_\omega)$, as desired.
\end{proof}

\begin{proof}[Proof of Theorem~\ref{thm:detect-S2}]
The ``if'' direction is essentially the same as an argument of Ni's~\cite[Lemma 2.1]{Ni09:FiberedMfld}. Specifically,
suppose $Y$ contains a $2$-sphere $S$ with $\int_{S}\omega\neq 0$. It
follows that $S$ is homologically essential, hence
nonseparating. Thus, $Y$ decomposes as $Y=Y_0\#(S^1\times S^2)$ such
that $S=\{p\}\times S^2$ for a point $p$ in $S^1$. (The submanifold
$(S^1\times S^2)\setminus B^3$ of $Y$ is a neighborhood of $S$ union
an arc connecting the two sides of $S$.) Let
$\omega_0\in\Omega^2(Y_0)$ and $\omega'\in\Omega^2(S^1\times S^2)$ be
closed $2$-forms such that $[\omega_0]\#[\omega']=[\omega]$. Since
$\int_{S}\omega\neq 0$, then $\int_{S}\omega'\neq 0$ and so, by direct
calculation, $\tHFa(S^1\times S^2;\Lambda_{\omega'})=0$.  So, the claim follows from the K\"unneth theorem, Lemma~\ref{lem:Novikov-Kunneth}.

For the ``only if'' direction, 
split $Y=Y_0\#^k(S^1\times S^2)$ where $Y_0$ has no homologically essential embedded $S^2$. If $\int_{S^2}\omega\neq 0$ for the generator of the second homology of some summand then we are done. Otherwise, let $\omega_0\in\Omega^2(Y_0)$ denotes the restriction of $\omega$.
Since $\tHFa(S^1\times S^2;\Lambda_{\omega_0})\neq 0$ if $\int_{S^2}\omega_0=0$ (by direct computation),
by the K\"unneth theorem, it suffices to show that $\tHFa(Y_0;\Lambda_{\omega_0})\neq 0$. 
By Corollary~\ref{cor:Ni-nonvanish}, there is an open set of closed $2$-forms $U\subset\Omega^2(Y_0)$ such that for any $\alpha\in U$ we have $\tHFa(Y_0;\Lambda_{\alpha})\neq 0$. In particular, there is a generic closed $2$-form $\tilde{\omega}\in\Omega^2(Y_0)$ such that $\tHFa(Y_0;\Lambda_{\tilde{\omega}})\neq 0$. It then follows from Corollary~\ref{HFdim-Novikov} that $\tHFa(Y_0;\Lambda_{\omega_0})\neq 0$.
\end{proof}

\begin{proof}[Proof of Theorem~\ref{thm:HF-nonvanish}]
Split $Y=Y_1\#\cdots\#Y_n$ such that every $Y_i$ is irreducible. By the connected sum formula 
\[\HFa(Y)=\HFa(Y_1)\otimes \cdots\otimes \HFa(Y_n).\]
So, it is enough to prove the theorem for an irreducible closed $3$-manifold.

If $Y$ is a rational homology sphere then the result follows from the fact
that $\chi(\HFa(Y))\neq 0$~\cite[Proposition
5.1]{OS04:HolDiskProperties} and so $\HFa(Y)\neq 0$.

If $b_1(Y)>0$, then let $F$ be a Thurston-norm minimizing
surface in $Y$ such that $[F]\neq 0$ in $H_2(Y)$. Then, $F$ is taut
and a result of Hedden-Ni~\cite[Theorem 2.2]{HeddenNi10:small} implies that $\HFa(Y,[F],\OneHalf x(F))\otimes \QQ\neq 0$ and thus $\HFa(Y)\neq 0$.
\end{proof}

Theorem~\ref{thm:HF-nonvanish} has an extension to the sutured Floer
homology of link complements. Let $L$ be a link in a closed, oriented
$3$-manifold $Y$. We make $Y(L)=Y\setminus\nbd(L)$ into a sutured
manifold by adding two parallel copies, with opposite orientations, of a
homologically essential curve on each component of $\bdy \nbd(L)$ as
sutures. Denote the set of sutures by $\Gamma$. Note that $\Gamma$ is not unique.

Recall that a sutured manifold $(Y,\Gamma)$ is {\it
  irreducible} if every embedded $S^2$ in $Y$ bounds a
$3$-ball. Otherwise, $Y$ is \emph{reducible}.

\begin{theorem}\label{thm:SFH-nonvanishing}
There is a suture $\gamma\in\Gamma$ which bounds an embedded disk in $Y(L)$ if and only if $\SFH(Y(L),\Gamma)=0$.
\end{theorem}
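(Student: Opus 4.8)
The plan is to deduce this from Juhász's tautness criteria (Theorems~\ref{thm:vanishSFH} and~\ref{thm:nonvanishSFH}) together with the reducible-case bookkeeping, so that the main content is a topological equivalence: the sutured manifold $(Y(L),\Gamma)$ is taut (and irreducible) if and only if no suture $\gamma\in\Gamma$ bounds an embedded disk in $Y(L)$. First I would dispose of the reducible case: if $Y(L)$ is reducible, it contains an essential $S^2$; such a sphere either bounds a ball on each side (impossible since it is essential) or is nonseparating, and in either case $Y(L)=Y_0\#(S^1\times S^2)$ or $Y(L)$ has a separating essential $S^2$, so write $Y(L) = Y_0' \# Y_1'$ nontrivially. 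Then $\SFC(Y(L),\Gamma)\simeq \SFC(Y_0',\Gamma)\otimes \CFa(Y_1')$ by a Künneth argument for $\SFH$ (compare Lemma~\ref{lem:Novikov-Kunneth} and~\cite[Proposition 9.15]{Juhasz06:Sutured}); but a reducible $Y(L)$ — being a knot/link complement in a closed manifold — cannot split off a summand with $b_1=0$ unless that summand itself has $\SFH\ne 0$, and one checks directly $\SFH(S^1\times S^2,\Gamma)$ with these sutures is nonzero, while no suture on $\bdy\nbd(L)$ bounds a disk in $Y(L)$ (the sutures are homologically essential on the boundary torus, hence nontrivial in $H_1(Y(L))$ when... ) — I need to be careful here, so let me instead route everything through tautness directly.

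The cleaner approach: by Theorems~\ref{thm:vanishSFH} and~\ref{thm:nonvanishSFH}, $\SFH(Y(L),\Gamma)=0$ if and only if $(Y(L),\Gamma)$ is irreducible but not taut, OR $(Y(L),\Gamma)$ is reducible (handle reducible by noting a reducible sutured link complement still has a well-defined answer and showing both sides agree). So I would prove: \emph{$(Y(L),\Gamma)$ fails to be taut (or irreducible) $\iff$ some $\gamma\in\Gamma$ bounds a disk in $Y(L)$.} For the $(\Leftarrow)$ direction, if $\gamma\in\Gamma$ bounds a disk $D\subset Y(L)$, then $D$ is a compressing disk for one of $R_\pm(\Gamma)$: indeed $\gamma$ is a nonseparating curve on the torus component $T=\bdy\nbd(L_i)$ it lies on, so $R_+(\Gamma)\cap T$ and $R_-(\Gamma)\cap T$ are annuli, and $\gamma$ is the core of such an annulus, so $D$ compresses $R_+$ (equivalently $R_-$). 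Thus $R_\pm(\Gamma)$ is compressible, so $(Y(L),\Gamma)$ is not taut, giving $\SFH=0$. (One must also rule out $R_\pm$ being disks or spheres; they are annuli glued along the remaining boundary tori and the $\alpha$-arc regions, but this uses that $L\ne\emptyset$ meets every component — our standing convention — so they are not closed.)

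For the harder $(\Rightarrow)$ direction — which I expect to be the main obstacle — suppose $(Y(L),\Gamma)$ is not taut, i.e.\ either $Y(L)$ is reducible, or some $R_\bullet(\Gamma)$ is compressible, or some $R_\bullet(\Gamma)$ is not Thurston-norm minimizing in $H_2(Y(L),\Gamma)$. In fact $[R_+]=[R_-]$ in $H_2(Y(L),\Gamma)$ and together they form $\bdy Y(L)$, which is a union of tori, hence has Thurston norm zero; so the norm-minimizing condition is automatic once they are incompressible (an incompressible torus has norm-minimizing... — more precisely the class $[R_\pm]$ is represented by tori/annuli of norm $0$, and incompressibility of $R_\pm$ plus irreducibility forces norm-minimality). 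So the real point is: \emph{if $R_+(\Gamma)$ (equivalently $R_-(\Gamma)$) is compressible in $Y(L)$, then some $\gamma\in\Gamma$ bounds an embedded disk.} Here I would take a compressing disk $D$ for $R_+(\Gamma)$ with $\bdy D\subset R_+(\Gamma)$ essential; $R_+(\Gamma)$ is a union of annuli (one per boundary torus, with core isotopic to the suture) together with the "thickened $\alpha$-arcs" region coming from $\nbd(\alphas^{a})$ — but in the setup here (sutured link complement, no bordered boundary) $R_+(\Gamma)$ is just a disjoint union of annuli $A_i\subset T_i$, so an essential curve on $R_+$ is isotopic in $T_i$ to $\gamma_i$. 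Pushing $\bdy D$ across this isotopy (using irreducibility to fix up intersections, or first handling the reducible case separately) yields an embedded disk in $Y(L)$ bounded by $\gamma_i$, completing the argument. The reducible case must be folded in: if $Y(L)$ is reducible, I claim some suture still bounds a disk — if not, $(Y(L),\Gamma)$ would be taut-except-for-irreducibility, but an essential sphere in a link complement in $Y$ means $Y$ itself is reducible or the sphere surrounds some $L_i$, and in the latter case the meridian suture on $T_i$ bounds a disk in $Y(L)$ exactly when $L_i$ bounds a disk, i.e.\ is split and trivial — which is precisely the obstruction we want; a careful case analysis of essential spheres in $Y(L)$ relative to the $T_i$, together with the prime decomposition, closes this gap. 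The delicate bookkeeping in this last paragraph — disentangling "reducible", "compressible boundary", and "suture bounds a disk" — is where the real work lies.
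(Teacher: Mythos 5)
Your overall strategy---reduce to Juh\'asz's tautness criteria and show that $(Y(L),\Gamma)$ fails tautness exactly when some suture bounds a disk---matches the paper in the irreducible case, and your arguments there are sound: norm-minimality of the annular $R_\pm$ is automatic, tautness reduces to incompressibility, and Dehn's lemma (the paper uses the loop theorem) converts a compressing disk for an annular $R_\pm$ into a disk bounded by a suture. But your treatment of reducibility contains a real error: you assert that ``if $Y(L)$ is reducible then some suture still bounds a disk.'' This is false. Take $Y=S^1\times S^2$ and $L$ an unknot contained in a ball, with meridional sutures. Then $Y(L)=(S^1\times D^2)\#(S^1\times S^2)$ is reducible, the meridional sutures are nontrivial in $H_1(Y(L))$ and bound no disk, and $\SFH(Y(L),\Gamma)\neq 0$. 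No amount of case analysis can rescue a false implication; the theorem is true, but not via this route. A related gap: your $(\Leftarrow)$ direction wants to apply Theorem~\ref{thm:vanishSFH}, but that theorem assumes irreducibility, which you cannot assume a priori.

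What you are missing is the pair of tools the paper uses to \emph{reduce} to the irreducible case: the connected-sum formula for sutured Floer homology (\cite[Proposition~9.15]{Juhasz06:Sutured}), and---crucially---the non-vanishing of $\HFa$ for closed $3$-manifolds (Theorem~\ref{thm:HF-nonvanish}), which your proposal never invokes. The paper decomposes $(Y(L),\Gamma)$ into prime pieces: irreducible sutured pieces $(Y_i,\Gamma_i)$ for $i\le k$, irreducible closed pieces $Y_j$, and $S^1\times S^2$ summands. The connected-sum formula then writes $\SFH(Y(L),\Gamma)$ as a tensor product in which the closed-piece factors are $\HFa(Y_j)$, all nonzero by Theorem~\ref{thm:HF-nonvanish}, and the $S^1\times S^2$ summands contribute nonzero $\Field^2$ factors. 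Hence $\SFH(Y(L),\Gamma)=0$ if and only if $\SFH(Y_i,\Gamma_i)=0$ for some \emph{irreducible} sutured $(Y_i,\Gamma_i)$, and your irreducible-case argument (which is correct) finishes the proof. Without the $\HFa$ non-vanishing theorem, the closed summands could in principle kill $\SFH$ without any suture bounding a disk, and the reduction fails; so treat that non-vanishing result as an essential input rather than background.
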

\begin{proof}
Consider a decomposition of $(Y(L),\Gamma)$ as
\[(Y(L),\Gamma)=(Y_0,\Gamma_0)\#\cdots\#(Y_k,\Gamma_k)\#Y_{k+1}\#\cdots\#Y_n\#\left(\#^m(S^1\times S^2)\right)\]
so that:
\begin{itemize}
\item for any $k+1\le i\le n$, $Y_i$ is an irreducible closed $3$-manifold,
\item for any $0\le i\le k$, $(Y_i,\Gamma_i)$ is an irreducible
  sutured manifold where $\Gamma_i=\Gamma\cap Y_i$. 
\end{itemize}
Denote the sublink of $L$ whose corresponding set of sutures is $\Gamma_i$ by $L_i$.

It follows from the connected sum formula for sutured Floer homology ~\cite[Proposition 9.15]{Juhasz06:Sutured} that 
\[
  \SFH(Y(L),\Gamma)=\left(\bigotimes_{i=0}^{k}\SFH(Y_i,\Gamma_i)\right)\otimes\left(\bigotimes_{j=k+1}^{n} \HFa(Y_{j})\right)\otimes (\Field^{2})^{\otimes(m+k)}.
\]
Thus, Theorem~\ref{thm:HF-nonvanish} implies that $\SFH(Y(L),\Gamma)=0$ if and only if $\SFH(Y_i,\Gamma_i)=0$ for some $0\le i\le k$.

Suppose $\SFH(Y_i,\Gamma_i)=0$ for some $i$. Theorem~\ref{thm:nonvanishSFH} implies that $(Y_i,\Gamma_i)$ is not taut. All components of $R(\Gamma_i)$ are annuli, hence $R_+(\Gamma_i)$ and $R_-(\Gamma_i)$ are Thurston-norm minimizing, so 
 there is a suture $\gamma\in\Gamma_i$ which bounds a properly embedded disk in $Y_i$.

Conversely, if a suture $\gamma\in\Gamma_i$ bounds a disk $D$ in
$Y(L)$, we inductively construct a disk $D_i$ in $Y_i$ whose boundary
is $\gamma$. Perturb $D$ so it intersects the connected sum
$2$-spheres transversely, and denote the intersection by $Z$. If
$Z=\emptyset$, then $D$ lies in $Y_i$ and set $D_i=D$. Otherwise,
choose an innermost circle $Z_0$ in $Z\subset D$ and let $D'$ be the
disk in $D$ bounded by $Z_0$. For some $0\le j\le n$, the disk $D'$ is
in $Y_j$. Let $S$ be the connected sum sphere that contains
$Z_0$. Since $Y_j$ is irreducible, for one of the disks bounding $Z_0$
in $S$, denoted by $D''$, $D'\cup D''$ bounds a $3$-ball in
$Y_j\cap Y(L)$. Replace $S$ with an isotopic copy of $(S\setminus D'')\cup D'$, to get a connected sum decomposition of $(Y(L),\Gamma)$ into irreducible closed or sutured manifolds, such that the intersection of $D$ with the connected sum spheres has fewer components. Continue this process until $Z$ is empty, and the result is a disk $D_i$ in $Y_i$ with $\bdy D_i=\gamma$. Thus, the irreducible sutured manifold $(Y_i,\Gamma_i)$ is not taut. It follows from Theorem~\ref{thm:vanishSFH} that $\SFH(Y_i,\Gamma_i)=0$, which implies $\SFH(Y(L),\Gamma)=0$.
\end{proof}

\section{Bordered Heegaard Floer homology detects incompressible surfaces}\label{sec:bordered-detects}
The goal of this section is to prove Theorem~\ref{thm:detect-incompress}. We
start by defining the bimodule $\lsup{\Alg(\PMC)}\bLambda(\PMC)_{\Alg(\PMC)}$.
Fix a pointed matched circle $\PMC$ for a surface of genus $k$. Fix also, once and
for all, $4k-1$ real numbers $\lambda_1,\dots,\lambda_{4k-1}$ which are linearly
independent over $\QQ$; for instance,
$\lambda_1=\sqrt{2},\lambda_2=\sqrt{3},\lambda_3=\sqrt{5},\cdots$. There is an
induced injection $\psi\co \ZZ^{4k-1}\to \RR$ which sends the $i\th$ standard
basis vector to $\lambda_i$.

Recall that a strand diagram $a\in\Alg(\PMC)$ has a \emph{support}
$[a]\in \ZZ^{4k-1}$.

\begin{definition}
  As a $\Lambda\otimes_{\Ftwo}\Idem(\PMC)$-module, define
  $\bLambda(\PMC)=\Lambda\otimes_{\Ftwo}\Idem(\PMC)$. The structure maps
  $\delta^1_n$ on $\bLambda(\PMC)$ are defined to vanish if $n\neq 2$, and
  $\delta^1_2\co \bLambda(\PMC)\otimes \Alg(\PMC)\to \Alg(\PMC)\otimes
  \bLambda(\PMC)$ is a homomorphism of $\Lambda$-vector spaces. So, it
  only remains to define $\delta^1(i\otimes a)$ for $i$ a basic idempotent
  (viewed as a generator of $\bLambda(\PMC)$) and $a$ a
  strand diagram. Define
  \[
    \delta^1_2(i\otimes a)=
    \begin{cases}
      a\otimes T^{\psi([a])}j& \text{if }ia\neq 0,\text{ where }aj\neq 0\\
      0 & \text{otherwise}.
    \end{cases}
  \]
\end{definition}
It is immediate from the definitions that
$\lsup{\Alg(\PMC)}\bLambda(\PMC)_{\Alg(\PMC)}$ is a type \DA\ bimodule over
$\Alg(\PMC)$ and, in fact, over $\Alg(\PMC)\otimes_{\Ftwo}\Lambda$.

The homomorphism $\psi$ also induces an element
$[\psi]\in H^2(Y,\bdy Y;\RR)=\Hom(H_2(Y,\bdy Y),\RR)$ as follows. The
identification of $\bdy Y$ with $F(\PMC)$ gives an embedding
$\iota\co H_1(\bdy Y)\into \ZZ^{4k-1}$. So, given an element
$A\in H_2(Y,\bdy Y)$ there is a corresponding element
$\iota(\bdy A)\in \ZZ^{4k-1}$. Define $[\psi](A)=\psi(\iota(\bdy A))\in \RR$.

Next, let $(-Y)\cup_\bdy Y$ be the double of $Y$ along its boundary. Via the homomorphism
\[
  H^2(Y,\bdy Y;\RR)\cong H^2((-Y)\cup_\bdy Y,-Y;\RR)\to H^2((-Y)\cup_\bdy Y;\RR)
\]
we can view $[\psi]$ as an element of $H^2((-Y)\cup_\bdy Y;\RR)$ and hence there
is a corresponding Novikov coefficient system $\Lambda_{[\psi]}$ on
$(-Y)\cup_\bdy Y$. Note that $[\psi]$ vanishes on $A\in H_2((-Y)\cup_\bdy Y)$ if
and only if $A\cdot [\bdy Y]=0\in H_1(\bdy Y)$. (This uses the fact that the
$\lambda_i$ are linearly independent over $\QQ$.) This intersection product agrees with the second map in the Mayer-Vietoris sequence
\[
  H_2(-Y)\oplus H_2(Y)\to H_2((-Y)\cup_\bdy Y)\to H_1(\bdy Y),
\]
so $[\psi](A)=0$ if and only if $A$ comes from $H_2(-Y)\oplus H_2(Y)$. In other words, $[\psi]$ is in the image of the map $H^1(\bdy Y)\to H^2((-Y)\cup_\bdy Y)$
from the Mayer-Vietoris sequence.

Finally, given a bordered $3$-manifold $(Y,\phi\co F(-\PMC)\to \bdy Y)$, the composition
\[
  H_2(Y,\bdy Y)\stackrel{\bdy}{\longrightarrow} H_1(\bdy Y)
  \stackrel{\phi_*^{-1}}{\longrightarrow}H_1(F(-\PMC))
  \stackrel{[\psi]}{\longrightarrow}\RR
\]
makes the universal Novikov field $\Lambda$ into an algebra over
$\Ftwo[H_2(Y,\bdy Y)]$. Call this coefficient system $\Lambda_{\psi}$
and the resulting twisted bordered module
$\tCFDa(Y;\Lambda_{\psi})$. It is straightforward from the definitions that
\begin{equation}
  \label{eq:tCFD-is-box}
  \lsup{\Alg(\PMC)}\tCFDa(Y;\Lambda_{\psi})\cong \lsup{\Alg(\PMC)}\bLambda(\PMC)_{\Alg(\PMC)}\DT\lsup{\Alg(\PMC)}\CFDa(Y).
\end{equation}

\begin{lemma}\label{lem:tw-mor-cx}
  Fix a bordered $3$-manifold $(Y,\phi\co F(-\PMC)\to \bdy Y)$. Then there is a quasi-isomorphism
  \[
    \Mor_{\Alg(\PMC)}(\lsup{\Alg(\PMC)}\CFDa(Y,\phi),\lsup{\Alg(\PMC)}\bLambda(\PMC)_{\Alg(\PMC)}\DT\lsup{\Alg(\PMC)}\CFDa(Y,\phi))
    \simeq \tCFa((-Y)\cup_\bdy Y;\Lambda_{[\psi]}).
  \]
\end{lemma}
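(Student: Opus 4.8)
The plan is to combine the morphism-complex pairing theorem from the bordered theory with the twisted pairing theorem (Corollary~\ref{cor:twisted-pairing}) and the identification~\eqref{eq:tCFD-is-box}. Concretely, the morphism complex $\Mor_{\Alg(\PMC)}(\lsup{\Alg(\PMC)}\CFDa(Y,\phi),\lsup{\Alg(\PMC)}\bLambda(\PMC)_{\Alg(\PMC)}\DT\lsup{\Alg(\PMC)}\CFDa(Y,\phi))$ should be computed in two moves: first rewrite $\lsup{\Alg(\PMC)}\bLambda(\PMC)_{\Alg(\PMC)}\DT\lsup{\Alg(\PMC)}\CFDa(Y,\phi)$ as $\tCFDa(Y;\Lambda_\psi)$ using~\eqref{eq:tCFD-is-box}; second, recognize $\Mor_{\Alg(\PMC)}(\lsup{\Alg(\PMC)}\CFDa(Y,\phi),-)$ as a pairing with the $A$-side invariant of $-Y$, via the duality $\CFDa(Y)^*\simeq \CFAa(-Y)$ and the morphism-complex form of the pairing theorem $\CFa(Y_0\cup_\bdy Y_1)\simeq \Mor^{\Alg(\PMC)}(\CFDa(-Y_0),\CFDa(Y_1))$ quoted from~\cite{LOTHomPair}. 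Applying the latter with $Y_0=-Y$ and $Y_1=Y$ (so that $Y_0\cup_\bdy Y_1 = (-Y)\cup_\bdy Y$) should give the statement, once one checks that the twisted coefficient system produced on the glued manifold is exactly $\Lambda_{[\psi]}$.

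\textbf{Key steps, in order.} First I would recall, from the twisted pairing theorem (Corollary~\ref{cor:twisted-pairing}), that for bordered manifolds $Y_0,Y_1$ with $Y=Y_0\cup_\bdy Y_1$ and $F=\bdy Y_0$, and modules $M,N$ over $\Ftwo[H_2(Y_0,\bdy Y_0)]$ and $\Ftwo[H_2(Y_1,\bdy Y_1)]$ respectively, one has $\tCFAa(Y_0;M)\DT\tCFDa(Y_1;N)\simeq \tCFa(Y;M\otimes_{\Ftwo}N)$. I would apply this with $Y_0=-Y$, $Y_1=Y$, $N=\Lambda_\psi$, and $M=\Ftwo[H_2(-Y,\bdy Y)]$ the trivial (untwisted-on-the-$Y_0$-side) system — i.e., pair the untwisted $\tCFAa(-Y)$ against the twisted $\tCFDa(Y;\Lambda_\psi)$. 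Second, I would invoke the duality $\tCFAa(-Y)\simeq \tCFDa(Y)^*$ together with the fact (from~\cite{LOTHomPair}) that $\Mor$ of type $D$ structures computes exactly this box tensor product of a dual with the appropriate bimodule-resolved pairing, so that $\Mor_{\Alg(\PMC)}(\CFDa(Y),P)\simeq \tCFAa(-Y)\DT P$ for any type $D$ structure $P$ over $\Alg(\PMC)$; taking $P=\bLambda(\PMC)\DT\CFDa(Y)\cong\tCFDa(Y;\Lambda_\psi)$ gives $\Mor_{\Alg(\PMC)}(\CFDa(Y),\bLambda(\PMC)\DT\CFDa(Y))\simeq \tCFAa(-Y)\DT\tCFDa(Y;\Lambda_\psi)$. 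Third, I would identify the resulting twisted coefficient system: the pairing produces $\tCFa$ of the double with coefficients in $\Ftwo[H_2(-Y,\bdy Y)]\otimes_{\Ftwo}\Lambda_\psi$, and I must check that restricting scalars along $H_2((-Y)\cup_\bdy Y)\to H_2(-Y,\bdy Y)\oplus H_2(Y,\bdy Y)$ and then applying $[\psi]$ on the $Y$-factor reproduces precisely $\Lambda_{[\psi]}$ as defined just before the lemma — this is exactly the content of the Mayer--Vietoris discussion preceding the lemma statement, where it is shown that $[\psi]\in H^2((-Y)\cup_\bdy Y;\RR)$ is the image of $[\psi]\in H^2(Y,\bdy Y;\RR)$ under the connecting map, and that it vanishes on classes from $H_2(-Y)\oplus H_2(Y)$.

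\textbf{Main obstacle.} The routine steps are the two applications of pairing theorems; the delicate point is the bookkeeping in the third step — matching the coefficient module $\Ftwo[H_2(-Y,\bdy Y)]\otimes_{\Ftwo}\Lambda_\psi$ (as a module over $\Ftwo[H_2((-Y)\cup_\bdy Y)]$ via restriction of scalars along the Mayer--Vietoris map $H_2((-Y)\cup_\bdy Y)\to H_2((-Y)\cup_\bdy Y, -Y)\cong H_2(Y,\bdy Y)$) against the coefficient system $\Lambda_{[\psi]}$ defined directly on the double. One must verify that the two $\Ftwo[H_2((-Y)\cup_\bdy Y)]$-module structures on $\Lambda$ agree, which amounts to checking that the composite $H_2((-Y)\cup_\bdy Y)\to H_2(Y,\bdy Y)\stackrel{\bdy}{\to}H_1(\bdy Y)\stackrel{\phi_*^{-1}}{\to}H_1(F(-\PMC))\stackrel{[\psi]}{\to}\RR$ equals evaluation of $[\psi]\in H^2((-Y)\cup_\bdy Y;\RR)$, and that the trivial system on the $-Y$ side contributes nothing — i.e., that $H_2(-Y,\bdy Y)$ maps to zero under the relevant composite, which is why we pair an \emph{untwisted} $A$-module of $-Y$ against the \emph{twisted} $D$-module of $Y$. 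Once this diagram of $H_2$'s and $H^2$'s is seen to commute (using that the $\lambda_i$ are $\QQ$-linearly independent, so that $[\psi]$ detects exactly the image in $H_1(\bdy Y)$), the quasi-isomorphism follows. I would also note in passing that the asymmetry — twisting only the $Y$ copy, not the $-Y$ copy — is harmless precisely because on the double $[\psi]$ factors through $H_1(\bdy Y)$, so a symmetric twisting by $\pm[\psi]$ on the two sides would give the same cohomology class up to the conventions already fixed.
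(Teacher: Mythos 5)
Your strategy matches the paper's exactly: use the duality $\CFAa(-Y,\phi)\simeq\CFDa(Y,\phi)^*$ to convert the morphism complex into a box tensor product with $\CFAa(-Y)$, identify $\bLambda(\PMC)\DT\CFDa(Y)$ with $\tCFDa(Y;\Lambda_\psi)$ via Equation~\eqref{eq:tCFD-is-box}, and then apply Corollary~\ref{cor:twisted-pairing}. The paper compresses the coefficient-system bookkeeping into ``the result follows''; you correctly isolate that bookkeeping as the delicate point, and your Mayer--Vietoris discussion of why the resulting coefficient system is $\Lambda_{[\psi]}$ is the right content.

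There is, however, a recurring slip in your choice of the coefficient module $M$ that, if taken literally, breaks the argument. You write $M=\Ftwo[H_2(-Y,\bdy Y)]$ and describe this as ``the trivial (untwisted-on-the-$Y_0$-side) system.'' But $\Ftwo[H_2(-Y,\bdy Y)]$ is the free rank-one module over itself, so $\tCFAa(-Y;\Ftwo[H_2(-Y,\bdy Y)])=\tCFAa(-Y)$ is the \emph{totally twisted} module, not the untwisted $\CFAa(-Y)$. The duality [Theorem 2, LOTHomPair] that lets you rewrite the Mor complex is a statement about the untwisted modules, so you need $\CFAa(-Y)$, which corresponds to $M=\Ftwo$ with the trivial $\Ftwo[H_2(-Y,\bdy Y)]$-module structure (every $e^\alpha$ acts by $1$). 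Correspondingly, your later assertion that ``the pairing produces $\tCFa$ of the double with coefficients in $\Ftwo[H_2(-Y,\bdy Y)]\otimes_{\Ftwo}\Lambda_\psi$'' is wrong for the same reason: that module, restricted along the Mayer--Vietoris map, is much bigger than $\Lambda_{[\psi]}$ and is not even a rank-one $\Ftwo[H_2((-Y)\cup_\bdy Y)]$-module. With $M=\Ftwo$ you instead get $\Ftwo\otimes_{\Ftwo}\Lambda_\psi=\Lambda_\psi$, viewed as a module over $\Ftwo[H_2(-Y,\bdy Y)]\otimes\Ftwo[H_2(Y,\bdy Y)]$ with the first factor acting trivially; the restriction of this along the Mayer--Vietoris map $H_2((-Y)\cup_\bdy Y)\to H_2(-Y,\bdy Y)\oplus H_2(Y,\bdy Y)$ does reproduce $\Lambda_{[\psi]}$, exactly because the paper defines $[\psi]$ on the double as the image of $[\psi]\in H^2(Y,\bdy Y;\RR)$ under $H^2(Y,\bdy Y)\cong H^2((-Y)\cup_\bdy Y,-Y)\to H^2((-Y)\cup_\bdy Y)$. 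Your phrase ``pair the untwisted $\tCFAa(-Y)$'' is internally contradictory ($\tCFAa$ denotes the twisted invariant), which is the same confusion surfacing in the notation. Once you replace $M=\Ftwo[H_2(-Y,\bdy Y)]$ by $M=\Ftwo$ and write $\CFAa(-Y)$ rather than $\tCFAa(-Y)$ throughout, the argument is correct and coincides with the paper's.
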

\begin{proof}
  Since $\CFAa(-Y,\phi)$ is dual to $\CFDa(Y,\phi)$~\cite[Theorem
  2]{LOTHomPair}, the statement is equivalent to
  \[
    \CFAa(-Y,\phi)_{\Alg(\PMC)}\DT\lsup{\Alg(\PMC)}\bLambda(\PMC)_{\Alg(\PMC)}\DT\lsup{\Alg(\PMC)}\CFDa(Y,\phi))\simeq \tCFa((-Y)\cup_\bdy Y;\Lambda_{[\psi]}).
  \]
  So, the result follows from Equation~\eqref{eq:tCFD-is-box} and
  the twisted pairing theorem, Corollary~\ref{cor:twisted-pairing}.
\end{proof}

Call an element of $H^1(\bdy Y;\RR)$ \emph{generic} if the induced map
$H_1(\bdy Y)\to \RR$ is an embedding. The element
$[\psi]\in H^2((-Y)\cup_\bdy Y)$ is the image of a generic element of
$H^1(\bdy Y;\RR)$. Since we continue to think of real cohomology classes as represented by differential forms, we will use integral signs to represent the pairing between homology and cohomology. The following is a variant on a well-known lemma in 3-manifold topology (cf.~\cite[Lemma 3.4]{GL15:OB-prime},~\cite[Section 7]{Haken68:surfaces}):
\begin{lemma}\label{lem:disk-to-sphere}
  Let $Y$ be a $3$-manifold with connected boundary and $\omega$ a generic
  element of $H^1(\bdy Y;\RR)$. Let $\omega'$ be the induced
  element of $H^2((-Y)\cup_{\bdy} Y;\RR)$.  There is an embedded sphere $S$ in
  $(-Y)\cup_\bdy Y$ with $\int_S\omega'\neq 0$ if and only if there is
  an embedded disk in $(Y,\bdy Y)$ with $\int_{\bdy D}\omega\neq 0$.
\end{lemma}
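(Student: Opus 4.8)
The plan is to prove both directions by elementary cut-and-paste, leaning on the Mayer--Vietoris sequences for $(-Y)\cup_\bdy Y$ that were already set up above (which identify $\omega'$ as the image $\delta^*\omega$ of $\omega$ under the connecting map $\delta^*\co H^1(\bdy Y;\RR)\to H^2((-Y)\cup_\bdy Y;\RR)$, and show $[\psi]$-type classes are exactly those dual to $[\bdy Y]$).

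For the ``if'' direction, suppose $D\subset(Y,\bdy Y)$ is an embedded disk with $\int_{\bdy D}\omega\neq 0$. Take a mirror copy $D'\subset -Y$ of $D$, so that $S:=D\cup_{\bdy D}D'$ is an embedded sphere in $(-Y)\cup_\bdy Y$. I would then compute $\int_S\omega'$ by noting that, in the homological Mayer--Vietoris sequence, the connecting map $\partial_*\co H_2((-Y)\cup_\bdy Y)\to H_1(\bdy Y)$ sends $[S]$ to $\pm[\bdy D]$ (split the cycle $S$ as $D\in H_2(Y,\bdy Y)$ plus $D'\in H_2(-Y,\bdy Y)$ and take the boundary), and that $\delta^*$ and $\partial_*$ are adjoint up to sign. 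Hence $\int_S\omega'=\langle\delta^*\omega,[S]\rangle=\pm\langle\omega,\partial_*[S]\rangle=\pm\int_{\bdy D}\omega\neq 0$.

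For the ``only if'' direction, suppose $S$ is an embedded sphere with $\int_S\omega'\neq 0$; isotope it to meet $\bdy Y$ transversally in circles $c_1,\dots,c_n$. The same computation gives $\int_S\omega'=\pm\langle\omega,\textstyle\sum_i\pm[c_i]\rangle$, so genericity of $\omega$ forces $\sum_i\pm[c_i]\neq 0$ in $H_1(\bdy Y;\RR)$; in particular $n\geq 1$. I would now pick, among all embedded spheres with nonzero $\omega'$-integral, one with $n$ minimal. If some $c_i$ bounded a disk in $\bdy Y$, surgering $S$ along an innermost such disk produces embedded spheres $S',S''$ with $[S']+[S'']=[S]$ in $H_2$ and with strictly fewer intersection circles each; one of them still has nonzero $\omega'$-integral, contradicting minimality. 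So no $c_i$ bounds a disk in $\bdy Y$. Viewing $S$ as a tree of planar pieces cut out by the $c_i$, a leaf piece is a subdisk $\Delta\subset S$ bounded by some $c$ with $\mathrm{int}(\Delta)\cap\bdy Y=\emptyset$, hence a properly embedded compressing disk for $\bdy Y$ lying in $Y$ or in $-Y$; since $-Y$ is just $Y$ with the opposite orientation, either way this is an embedded compressing disk $D\subset(Y,\bdy Y)$ with $\bdy D=c$.

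It then remains to check $[\bdy D]=[c]\neq 0$ in $H_1(\bdy Y)$, which by genericity of $\omega$ is equivalent to $\int_{\bdy D}\omega\neq 0$. Were $[c]=0$, I would cut $S$ as $\Delta\cup_c S_2$ and re-cap $S_2$ with a mirror copy of $\Delta$, pushed into whichever of $\pm Y$ the surface $S_2$ approaches along $c$; after a standard innermost-disk cleanup making this mirror disk disjoint from $S_2$ away from $c$, the result is an embedded sphere $S^\sharp$ with $[S^\sharp\cap\bdy Y]=\sum_{i\geq 2}\pm[c_i]=\sum_i\pm[c_i]\neq 0$ but fewer intersection circles, contradicting minimality. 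Hence $[c]\neq 0$, and $D$ is the required disk. I expect the real obstacle to be exactly this last cleanup --- arranging the mirror disk to meet the rest of $S$ only in $c$ --- which is the heart of the cited cut-and-paste/normal-surface lemmas \cite[Lemma 3.4]{GL15:OB-prime}, \cite[Section 7]{Haken68:surfaces}; everything else is routine Mayer--Vietoris bookkeeping and innermost-disk manipulation.
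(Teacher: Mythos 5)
Your strategy---double the disk for ``if'', innermost-disk swaps to reduce the number of intersection circles for ``only if''---matches the paper's, and the Mayer--Vietoris bookkeeping identifying $\int_S\omega'$ with $\pm\langle\omega,\partial_*[S]\rangle$ is correct and is effectively what the paper uses. However, your ``only if'' direction has a gap exactly where you flag it. You minimize the number $n$ of intersection circles over \emph{embedded} spheres, and at the mirror-swap step you need to re-embed $S^\sharp=S_2\cup_c(-\Delta)$ without changing $S^\sharp\cap\bdy Y$. This is not a ``standard innermost-disk cleanup'': the usual exchanges making $-\Delta$ disjoint from $S_2$ swap subdisks between them, and the subdisk of $S_2$ being excised may contain some of the circles $c_2,\dots,c_n$, which destroys the invariant $\sum_{i\ge2}\pm[c_i]$ you need to preserve (and hence the nonvanishing of $\int_{S^\sharp}\omega'$). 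Replacing $-\Delta$ by some other embedded disk in $-Y$ bounded by $c$ (supplied by Dehn's lemma) does not help either, since that disk may still meet $S_2$. So the extremal invariant is not obviously maintained, and this is a genuine hole rather than a routine technicality.

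The paper avoids this by \emph{not} insisting $S$ stay embedded: through the induction it only preserves the weaker property that $S$ is \emph{embedded near $\bdy Y$}. With that relaxed invariant, the mirror swap $S'=(S\setminus D_0)\cup(-D_0)$ is automatically embedded near $\bdy Y$ after a small pushoff, $\int_{S'}\omega'=\int_S\omega'$, and the number of intersection circles drops by one---no cleanup at all. The induction terminates with a (possibly singular) disk $D$ embedded near $\bdy Y$, so $\bdy D$ is an embedded curve in $\bdy Y$ with $\int_{\bdy D}\omega\neq 0$ that is nullhomotopic in $Y$, and Dehn's lemma/the loop theorem is invoked \emph{once}, at the very end, to replace $D$ by an embedded disk with the same boundary. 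Incidentally, with this setup your preliminary step ruling out circles $c_i$ that bound disks in $\bdy Y$ becomes unnecessary (and in fact is never used in your argument as written). The fix is therefore: run your same extremal argument but over spheres embedded near $\bdy Y$, and finish with Dehn's lemma, rather than trying to maintain global embeddedness.
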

Since $\omega$ is generic, the condition that
$\int_{\bdy D}\omega\neq 0$ is equivalent to
$[\bdy D]\neq 0\in H_1(\bdy Y)$.
\begin{proof}
  If there is an embedded disk in $(Y,\bdy Y)$ with
  $\int_{\bdy D}\omega\neq 0$ then, letting
  $S=(-D)\cup_\bdy D\subset (-Y\cup_\bdy Y)$, we have
  $\int_S\omega'=\int_{\bdy D}\omega\neq 0$.

  For the converse, suppose there is an embedded sphere $S$ in
  $(-Y)\cup_\bdy Y$ with $\int_S\omega\neq 0$. Perturbing $S$
  slightly, we may assume that $S$ is transverse to $\bdy Y$, so
  $Z=S\cap\bdy Y$ is an embedded $1$-manifold in $S$. If $Z$ has a
  single component then we can take $D$ to be either hemisphere of
  $S\setminus Z$. So, we will inductively reduce the number of
  components of $Z$, while preserving the properties that:
  \begin{itemize}
  \item $\int_S\omega'\neq 0$ and
  \item there is an open neighborhood $U\subset (-Y)\cup_\bdy Y$ of
    $\bdy Y$ so that $S\cap U$ is an embedded submanifold. That is,
    $S$ is \emph{embedded near $\bdy Y$}.
  \end{itemize}
  At the intermediate steps, $S$ may not itself be embedded. Our goal
  is to find a disk $D$ in $Y$ or $-Y$ which is embedded near $\bdy Y$
  and so that $\int_{\bdy D}\omega=\int_D\omega'\neq 0$. Dehn's lemma 
  then gives an embedded disk $D'$ in $Y$ with the same
  boundary, proving the result.

  So, choose an innermost circle $Z_0$ in $Z\subset D$ and let $D_0$ be the
  disk in $S$ bounded by $Z_0$. Without loss of generality, suppose
  $D_0\subset Y$. Let $S_0=(-D_0)\cup_{Z_0}D_0$ be the double of
  $D_0$, which is a sphere in $(-Y)\cup_\bdy Y$, embedded near
  $\bdy Y$. Since $\int_{\bdy D_0}\omega=\int_{S_0}\omega'$, if
  $\int_{S_0}\omega'\neq 0$ then we are done. If $\int_{S_0}\omega'=0$ then
  $S'=(S\setminus D_0)\cup(-D_0)$ is another sphere in
  $(-Y)\cup_\bdy Y$, and 
  $
    \int_{S'}\omega'=\int_S\omega'-\int_{S_0}\omega'=\int_S\omega'\neq 0.
  $
  We can isotope $S'$ to intersect $\bdy Y$ in
  $Z\setminus Z_0$. In particular, $S'$ is still embedded near
  $\bdy Y$. So, by induction on the number of components of $Z$, we
  are done.
\end{proof}

\begin{proof}[Proof of Theorem~\ref{thm:detect-incompress}]
  By Lemma~\ref{lem:tw-mor-cx}, we want to show that
  $\tHFa((-Y)\cup_\bdy Y;\Lambda_{[\psi]})=0$ if and only if $\bdy Y$
  has a homologically essential compressing disk. By
  Lemma~\ref{lem:disk-to-sphere}, $\bdy Y$ has a homologically
  essential compressing disk if and only if $(-Y)\cup_\bdy Y$ contains
  an embedded sphere $S$ with $\int_{S}[\psi]\neq 0$. By
  Theorem~\ref{thm:detect-S2}, this condition is equivalent to
  $\tHFa((-Y)\cup_\bdy Y;\Lambda_{[\psi]})=0$.
\end{proof}

\begin{remark}\label{rem:weak-compress}
  Call a compressing disk $D$ in $Y$ \emph{weakly homologically essential} if $[D]\neq 0\in H_2(Y,\bdy Y)$. If $Y$ has no $S^2\times S^1$ summands then the totally twisted bordered Floer module $\tCFDa(Y,\bdy Y)$ detects whether $\bdy Y$ has a weakly homologically essential compressing disk, as follows. Choose an injection $\omega\co  H_2(Y,\bdy Y)\to \RR$, making the universal Novikov field into a module $\Lambda_\omega$ over $\Ftwo[H_2(Y,\bdy Y)]$. From $\tCFDa(Y,\bdy Y)$ we can construct $\tCFDa(Y,\bdy Y;\Lambda_\omega)$. Then
  \[
    H_*\Mor(\CFDa(Y,\bdy Y),\tCFDa(Y,\bdy Y;\Lambda_\omega))=0
  \]
  if and only if $\bdy Y$ has a weakly homologically essential compressing disk. To see this, imitate the proof of Lemma~\ref{lem:disk-to-sphere} to show that $Y$ has a weakly homologically essential compressing disk if and only if the double of $Y$ has a homologically essential $2$-sphere $S$ such that $\omega$ evaluates nontrivially on $[S\cap Y]\in H_2(Y,\bdy Y)$. Then apply Theorem~\ref{thm:detect-S2}.
\end{remark}

\section{Multiple boundary components}\label{sec:multiple-bdy}
Theorem~\ref{thm:detect-incompress} has generalizations to
manifolds with multiple boundary components. For notational simplicity
we will focus on the case of manifolds with two boundary components;
there are no essential differences in the general case.

\begin{figure}
  \centering
  \includegraphics[scale=.8]{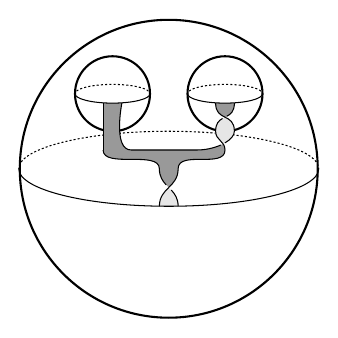}
  \caption{\textbf{Framed prongs.} This is an analogue of an arced cobordism for $Y=\bar{B}^3\setminus (B^3\amalg B^3)$. The embedded disk which plays the role of the framed arc is shaded.}
  \label{fig:prongs}
\end{figure}

So, consider a compact $3$-manifold $Y$ with boundary components
$\bdy_LY$ and $\bdy_RY$. Choosing parametrizations
$\phi_L\co F(\PMC_L)\to\bdy_LY$ and $\phi_R\co F(\PMC_R)\to\bdy_R Y$
and a framed arc $\gamma$ in $Y$ from $\phi_L(z)$ to $\phi_R(z)$ makes
$Y$ into an \emph{arced cobordism}. (Thinking of the framed arc as a
ribbon, the generalization to the case of more than two boundary
components is to choose an embedding of a closed disk $\bD^2$ so that
one arc on $\bdy\bD^2$ maps to each component of $\bdy Y$. See
Figure~\ref{fig:prongs}, as well as~\cite[Remark
5.7]{LOT2} and~\cite[Section 2]{Hanselman16:graph}.)  There is a corresponding left-left
type \DD\ structure
$\lsup{\Alg(-\PMC_L),\Alg(-\PMC_R)}\CFDDa(Y)$~\cite[Section 6]{LOT2}. Equivalently, we can
think of $\CFDDa(Y)$ as a left type $D$ structure over
$\Alg(-\PMC_L)\otimes_{\Ftwo}\Alg(-\PMC_R)$.

The endomorphism complex of
$\lsup{\Alg(-\PMC_L)\otimes_{\Ftwo}\Alg(-\PMC_R)}\CFDDa(Y)$ does not
compute the Heegaard Floer homology of the double of $Y$. Rather, if
we let $K=(-\gamma)\cup_{\bdy}\gamma\subset (-Y)\cup_{\bdy} Y$ then
\begin{multline}\label{eq:self-pair-1}
  \Mor^{\Alg(-\PMC_L)\otimes_{\Ftwo}\Alg(-\PMC_R)}(\lsup{\Alg(-\PMC_L)\otimes_{\Ftwo}\Alg(-\PMC_R)}\CFDDa(Y),\lsup{\Alg(-\PMC_L)\otimes_{\Ftwo}\Alg(-\PMC_R)}\CFDDa(Y))\\
  \simeq
  \CFKa\bigl([(-Y)\cup_{\bdy} Y]_{-1}(K),K'\bigr)
\end{multline}
where $K'$ is the core of the surgery torus in the result of $-1$-surgery on $K$~\cite[Theorem 6]{LOTHomPair}.

To obtain the Heegaard Floer homology of $(-Y)\cup_{\bdy} Y$ involves an
extra bimodule, denoted $\BSDA(\TC(\PMC))$ in earlier
work~\cite{LT:hoch-loc} (see also~\cite{Hanselman16:graph}). We can think
of an arced cobordism $(Y,\phi_L,\phi_R,\gamma)$ as a bordered-sutured
manifold $Y(\gamma)$ by deleting a neighborhood of $\gamma$ and
placing two longitudinal sutures along the result (using the
framing). (In the multiple boundary component case, the sutures are
the edges of the ribbon.) In this setting, Formula~\eqref{eq:self-pair-1} follows from
Theorem~\ref{thm:BS-hom-pair}; the surgery comes from the two twisting
slices. For a pointed matched circle $\PMC$, if we think of $F(\PMC)$ as a surface with one boundary
component (i.e., think of $\PMC$ as an arc diagram) then the
bordered-sutured cobordism $\TC(\PMC)$ is obtained from
$[0,1]\times F(\PMC)$ by attaching a $3$-dimensional $2$-handle to
$\{1/2\}\times(\bdy F(\PMC))$. The bordered boundary of $\TC(\PMC)$ is
$\{0,1\}\times F(\PMC)$. The remainder of $\bdy\TC(\PMC)$ consists of
two $2$-disks. There is a single suture arc on each of these $2$-disks, so $R_+$ and $R_-$ each consists of two disks, one on each component of $\bdy\TC(\PMC)$.

It follows that
\[
  \bigl(-Y(\gamma)\bigr)\cup_{F(-\PMC_L)\amalg F(-\PMC_R)} \bigl(\TC(-\PMC_R)\cup_{F(-\PMC_R)}Y(\gamma)\bigr)\cong
  (-Y\cup_{\bdy}Y)\setminus \bD^3,
\]
as sutured manifolds. 
Thus, the following result is the natural generalization of Theorem~\ref{thm:detect-incompress}:
\begin{theorem}\label{thm:detect-incompress-2-bdy}
  Fix a compact, connected, oriented 3-manifold $Y$ with two boundary
  components, $\bdy_LY$ and $\bdy_RY$. Make $Y$ into an arced
  cobordism with boundary $F(-\PMC_L)$ and $F(-\PMC_R)$ for some pointed
  matched circles $\PMC_L$ and $\PMC_R$. Then
  \begin{equation}\label{eq:detect-incompress-2-bdy}
    H_*\Mor^{\Alg(\PMC_L)\otimes\Alg(\PMC_R)}\bigl(\CFDDa(Y),\bLambda(\PMC_R)\DT_{\Alg(\PMC_R)}\BSDA(\TC(\PMC_R))\DT_{\Alg(\PMC_R)}\CFDDa(Y)\bigr)=0
  \end{equation}
  if and only if $\bdy_RY$ is compressible.
\end{theorem}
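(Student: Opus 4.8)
The plan is to mirror the proof of Theorem~\ref{thm:detect-incompress} step for step, replacing the pairing lemma (Lemma~\ref{lem:tw-mor-cx}), the $3$-manifold topology lemma (Lemma~\ref{lem:disk-to-sphere}), and the appeal to Theorem~\ref{thm:detect-S2} by their two-boundary analogues; no new ideas are needed, only more elaborate bookkeeping of bimodule actions. The first step is to show that the left-hand side of Equation~\eqref{eq:detect-incompress-2-bdy} computes $\tHFa\bigl((-Y)\cup_\bdy Y;\Lambda_{[\psi_R]}\bigr)$, where $[\psi_R]\in H^2\bigl((-Y)\cup_\bdy Y;\RR\bigr)$ is induced, via the Mayer--Vietoris connecting map, by a generic element of $H^1(\bdy_R Y;\RR)$ -- exactly as $[\psi]$ was built in Section~\ref{sec:bordered-detects}, but using only the $\bdy_R$ parametrization. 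The ingredients are already in place: box-tensoring $\bLambda(\PMC_R)$ onto the $\Alg(\PMC_R)$-action of $\CFDDa(Y)$ introduces twisted coefficients along $\bdy_R Y$, the two-boundary analogue of Equation~\eqref{eq:tCFD-is-box}; the $\Mor$-complex of the type $D$ (here $DD$) structure $\CFDDa(Y)$ is computed by gluing two negative twisting slices (Theorem~\ref{thm:BS-hom-pair}); and, as explained around Equation~\eqref{eq:self-pair-1}, inserting the bimodule $\BSDA(\TC(\PMC_R))$ is precisely what converts the resulting surgered manifold into the genuine double, via the identification $\bigl(-Y(\gamma)\bigr)\cup\bigl(\TC(-\PMC_R)\cup Y(\gamma)\bigr)\cong\bigl((-Y)\cup_\bdy Y\bigr)\setminus\bD^3$ recorded above. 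Applying the bordered-sutured pairing theorem (Theorem~\ref{thm:bordred-sut-pairing}) in its twisted-coefficient form (as in Corollary~\ref{cor:twisted-pairing}), together with the standard fact that $\SFH$ of a closed $3$-manifold with a ball removed is its $\HFa$, then yields the desired quasi-isomorphism.

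Next I would prove the analogue of Lemma~\ref{lem:disk-to-sphere}: $(-Y)\cup_\bdy Y$ contains an embedded sphere $S$ with $\int_S[\psi_R]\neq 0$ if and only if $Y$ contains a compressing disk $D$ for $\bdy_R Y$ with $[\bdy D]\neq 0\in H_1(\bdy_R Y)$; this is the sense in which ``$\bdy_R Y$ is compressible'' is to be read, in exact parallel with Theorem~\ref{thm:detect-incompress}. One direction is just doubling the disk. For the converse, one runs the same innermost-circle induction as in Lemma~\ref{lem:disk-to-sphere}, now with $S$ made transverse to all of $\bdy Y=\bdy_L Y\amalg\bdy_R Y$: doubling an innermost disk whose boundary lies on $\bdy_L Y$ produces a sphere pairing trivially with $[\psi_R]$ (which vanishes on classes supported near $\bdy_L Y$), so such circles can be surgered away without changing $\int_S[\psi_R]$, while circles on $\bdy_R Y$ are handled verbatim; Dehn's lemma then promotes the final disk to an embedded one.

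Combining these two steps with Theorem~\ref{thm:detect-S2}, applied to $(-Y)\cup_\bdy Y$ with the coefficient system $\Lambda_{[\psi_R]}$, would finish the proof: the left side of Equation~\eqref{eq:detect-incompress-2-bdy} vanishes iff $\tHFa\bigl((-Y)\cup_\bdy Y;\Lambda_{[\psi_R]}\bigr)=0$ iff the double contains a sphere $S$ with $\int_S[\psi_R]\neq 0$ iff $\bdy_R Y$ has a homologically essential compressing disk. I expect the first step to be the main obstacle: one must verify that the two negative twisting slices, the bimodule $\BSDA(\TC(\PMC_R))$, and the twisted coefficients assemble so that the morphism complex genuinely computes $\tCFa$ of the double -- rather than of a Dehn surgery on it, which is what the naive endomorphism complex of $\CFDDa(Y)$ gives by Equation~\eqref{eq:self-pair-1} -- and then transport $\Lambda_{[\psi_R]}$ faithfully through the successive $\DT$'s. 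The topology step is routine low-dimensional topology, and the final assembly is formal.
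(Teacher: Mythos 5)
Your proposal matches the paper's proof step for step: identify the morphism complex with $\tHFa$ of the double via the discussion preceding the theorem (twisted pairing plus the identification of $-Y(\gamma)\cup\bigl(\TC(-\PMC_R)\cup Y(\gamma)\bigr)$ with the double minus a ball), prove the two-boundary analogue of Lemma~\ref{lem:disk-to-sphere} by innermost-circle induction (correctly observing that circles on $\bdy_LY$ pair trivially with $[\psi]$ and so can always be surgered away), and invoke Theorem~\ref{thm:detect-S2}. Your reading of ``$\bdy_RY$ is compressible'' as ``$\bdy_RY$ has a homologically essential compressing disk'' is correct and is what the paper's Lemma~\ref{lem:disk-to-sphere-2-bdy} actually establishes.
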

(The generalization of Theorem~\ref{thm:detect-incompress-2-bdy} to
$n>2$ boundary components involves $n-1$ copies of $\TC$.)
\begin{proof}
  From the discussion above, the left side of
  Formula~\eqref{eq:detect-incompress-2-bdy} computes
  $\tHFa((-Y)\cup_\bdy Y;\Lambda_\omega)$ where
  $\omega\in H^2((-Y)\cup_{\bdy} Y;\RR)$ is the image of
  $[\psi]\in H^1(\bdy_RY;\RR)$ under the composition
  \begin{align*}
    H^1(\bdy_R Y;\RR)&\stackrel{(\Id,0)}{\longrightarrow}H^1(\bdy_R Y;\RR)\oplus H^1(\bdy_L Y;\RR)\cong H^1(\bdy Y;\RR)\\
    &\longrightarrow H^2(Y,\bdy Y;\RR)\cong H^2((-Y)\cup_\bdy Y,-Y;\RR)
    \longrightarrow H^2((-Y)\cup_\bdy Y;\RR)
  \end{align*}
  (see Section~\ref{sec:bordered-detects}). So, the result follows
  from an analogue of Lemma~\ref{lem:disk-to-sphere}:

  \begin{lemma}\label{lem:disk-to-sphere-2-bdy}
    Let $Y$ be a compact $3$-manifold with boundary
    $\bdy_LY\amalg\bdy_R Y$ and $\omega$ a generic element of
    $H^1(\bdy_R Y;\RR)$. Let $\omega'$ be the induced element of
    $H^2((-Y)\cup_\bdy Y;\RR)$.  There is an embedded sphere $S$ in
    $(-Y)\cup_\bdy Y$ with $\int_S\omega'\neq 0$ if and only if there
    is an embedded disk $D$ in $(Y,\bdy_R Y)$ with
    $\int_{\bdy D}\omega\neq 0$.
  \end{lemma}
  The proof of Lemma~\ref{lem:disk-to-sphere-2-bdy} is similar to the
  proof of Lemma~\ref{lem:disk-to-sphere} and is left to the reader.
\end{proof}

Note in particular that one can use Theorem~\ref{thm:detect-incompress-2-bdy} to test whether Seifert surfaces are incompressible (from either side).

\section{Partly boundary parallel tangles}\label{sec:tangles}
In this section, we prove Theorem~\ref{thm:detect-tang}. 

Suppose $Y$ is an oriented $3$-manifold and $T$ is a tangle in
$Y$. (Recall our convention that all components of all tangles are assumed to be
nullhomologous. So, in particular, their meridians are homologically essential in $Y(T)=Y\setminus\nbd(T)$.) An embedded disk $(D, \bdy D)$ in $(Y\setminus \nbd(T),\bdy
Y)$ is called \emph{essential}, if $D$ is not isotopic relative
boundary, in $Y\setminus \nbd(T)$,  to an embedded disk in $\bdy Y$. The
tangle $(Y,T)$ is called \emph{irreducible} if it has no essential
disk, and otherwise, it is called \emph{reducible}. Let $L$ denote the
nullhomologous link specified by doubling $T$ i.e., $L=-T\cup_{\bdy T} T$ in the closed $3$-manifold $X=-Y\cup_{\bdy Y} Y$. 

Corresponding to $(Y,T)$, there is a bordered-sutured manifold
$\mathcal{Y}_\Gamma(T)=(Y(T),\Gamma)$ with bordered and sutured
boundaries as described in Construction \ref{con:tang-to-BS}. Further,
for any interval component $T_i$ of $T$, we can modify $\Gamma$ to get
a new set $\Gamma_i$ of sutures and a bordered-sutured manifold
$\mathcal{Y}_{\Gamma_i}(T)=(Y(T),\Gamma_i)$.  Let $F(-\PMC)$ be the
bordered boundary of $\mathcal{Y}_{\Gamma}(T)$ (and
$\mathcal{Y}_{\Gamma_i}(T)$). The double
$D(Y,T)=-\mathcal{Y}_\Gamma(T)\cup_{F(\PMC)}\mathcal{Y}_\Gamma(T)$ is
the sutured manifold constructed from $X(L)=X\setminus \nbd(L)$ by
adding a pair of meridional sutures on any closed components of $-T$
or $T$ and a pair of longitudinal sutures on the rest of the
components. Note that despite there being many choices of longitudinal
sutures in $\Gamma$, the sutures $-\Gamma\cup_{\bdy}\Gamma$ of
$D(Y,T)$ are unique: the doubles of the longitudinal sutures are longitudes for
the nullhomologous knot $-T\cup_\bdy T$, and hence are well-defined up
to isotopy.

Moreover, for any choice of interval component $T_i$ of $T$, one may construct a sutured manifold from $X(L)$ by adding a pair of 
longitudinal sutures along $-T_i\cup_\bdy T_i$ and a pair of meridional sutures around the rest of the components. Denote this sutured manifold by $D_i(Y,T)$.

\begin{proof}[Proof of Theorem~\ref{thm:detect-tang}] By Corollary~\ref{cor:doubling} we have: 
  \[
  \SFH(D(Y,T))\cong  H_*\Mor^{\Alg(\PMC)}\left(\lsup{\Alg(\PMC)}\BSD(\mathcal{Y}_\Gamma(T)),\lsup{\Alg(\PMC)}\btau_{\Alg(\PMC)}\DT\lsup{\Alg(\PMC)}\BSD(\mathcal{Y}_\Gamma(T))\right).
  \]
Therefore, 
$$
H_*\Mor_{\Alg(\PMC)}\left(\lsup{\Alg(\PMC)}\BSD(\mathcal{Y}_\Gamma(T)),\lsup{\Alg(\PMC)}\btau_{\Alg(\PMC)}\DT\lsup{\Alg(\PMC)}\BSD(\mathcal{Y}_\Gamma(T))\right)=0.
$$
if and only if $\SFH(D(Y,T))=0$.

Choose a set $D=\{D_1,\dots,D_n\}$ of disjoint essential disks in $(Y,T)$ so that for the decomposition
$$Y\setminus \nbd(D)=Y_0\amalg\cdots\amalg Y_n$$
all pairs $(Y_i,T_i=Y_i\cap T)$ are irreducible. Note that some $T_i$ might be empty. This decomposition corresponds to a decomposition of the form
\[
  D(Y,T)=D(Y_0,T_0)\#\cdots\#D(Y_k,T_k)\#\bigl(\#^{n-k}(S^1\times S^2)\bigr)
  \]

By the connected sum formula, $\SFH(D(Y,T))=0$ if and only if for some $0\le i\le k$
we have $\SFH(D(Y_i,T_i))=0$. Therefore, it is enough to prove the theorem for an irreducible tangle.

Assume $(Y,T)$ is irreducible. It follows from Theorem~\ref{thm:SFH-nonvanishing} that $\SFH(D(Y,T))=0$ if and only if there is a suture $\gamma$ that bounds an embedded disk in $X(L)$.  Since $L$ is nullhomologous,
meridional sutures are homologically essential. So, $\gamma$ is a longitudinal suture and the corresponding component of $L$ bounds a disk in $X\setminus L$. Therefore, $\SFH(D(Y,T))=0$ is equivalent to existence of an interval component $T_0$, such that $-T_0\cup_{\bdy} T_0$
bounds a disk in $X\setminus L$. It remains to show that
$-T_0\cup_{\bdy} T_0$ bounding a disk in $X\setminus L$ is equivalent
to $T_0$ being partly boundary parallel. One direction, that $T_0$
partly boundary parallel implies $-T_0\cup_\bdy T_0$ bounds a disk, is
obvious. For the other, let $D$ denote the corresponding embedded disk
in $X\setminus L$ where $\bdy D=-T_0\cup_{\bdy} T_0$ and $Z=D\cap \bdy
Y$. If $Z$ is a single arc connecting the points in $\bdy T_0$, then
$T$ is partly boundary parallel. If $Z$ has some circle components,
consider an innermost circle $Z_0$ in $Z$ which bounds an empty disk
$D_0\subset D$. Since $(Y,T)$ is irreducible,  we can isotope $D$ so
that after the isotopy it intersects $\bdy Y$ at $Z\setminus
Z_0$. Continue this process until $Z$ has only one component, and we
are done.

For the second part, note that the sutured manifold $m(\mathcal{Y}_{\Gamma_i}(T))\cup\mathcal{TW}_{F(-\PMC),-}\cup\mathcal{Y}_{\Gamma_i}(T)$ is obtained from $D_i(Y,T)$ by a negative Dehn twist on the component corresponding to $-T_i\cup_{\bdy}T_i$. Thus, 
$$
H_*\Mor_{\Alg(\PMC)}\left(\lsup{\Alg(\PMC)}\BSD(\mathcal{Y}_{\Gamma_i}(T)),\lsup{\Alg(\PMC)}\btau^i_{\Alg(\PMC)}\DT\lsup{\Alg(\PMC)}\BSD(\mathcal{Y}_{\Gamma_i}(T))\right)\cong \SFH(D_i(Y,T)),
$$
where $\lsup{\Alg(\PMC)}\btau^i_{\Alg(\PMC)}$ is the DA bimodule
corresponding to performing a Dehn twist on one of the components of $\bdy F(-\PMC)$ specified by $T_i$. On the other hand, since $L$ is nullhomologous, the meridional sutures do not bound disks, so $\SFH(D_i(Y,T))=0$ if and only if the longitudinal suture corresponding to $-T_i\cup_{\bdy} T_i$ bounds a disk in $X(L)$. Equivalently, $-T_i\cup_{\bdy} T_i$ bounds a disk in $X\setminus L$. The rest of the proof is completely similar to the proof of the first part.
\end{proof}

\section{Computationally effective versions}\label{sec:comp}
Finally, we show that
Theorems~\ref{thm:detect-incompress} and~\ref{thm:detect-tang} can be
adapted for machine verification. (This is not
the first algorithm for detecting incompressibility.)
There are three barriers to applying these theorems to
verify incompressibility and partial boundary parallelness:
\begin{enumerate}[label=({\tiny \RainCloud}\arabic*)]
\item\label{item:comp-1} Computing $\lsup{\Alg(\PMC)}\CFDa(Y,\phi)$
  (Theorem~\ref{thm:detect-incompress}) or $\lsup{\Alg(\PMC)}\BSD(\mathcal{Y}_{\Gamma}(T))$
(Theorem~\ref{thm:detect-tang}).
\item\label{item:comp-1.5} In the case of Theorem~\ref{thm:detect-tang}, computing the bimodule $\lsup{\Alg(\PMC)}\btau_{\Alg(\PMC)}$.
\item\label{item:comp-2} Computing the homology of the morphism
  complex.
\end{enumerate}
The difficulty in~\ref{item:comp-1} and~\ref{item:comp-1.5} is that the bordered or bordered-sutured modules are defined by counting pseudoholomorphic curves. The difficulty in~\ref{item:comp-2} is that a general element of the Novikov field $\Lambda$ consists of infinitely much data---the sequence $a_i$ of real numbers---and so is not well-adapted to computer linear algebra.

\subsection{Computing bordered and bordered-sutured modules}
For $\CFDa$,~\ref{item:comp-1} was addressed in earlier work of
Ozsv\'ath, Thurston, and the second author~\cite[Section 8]{LOT4}. The
strategy is to decompose $Y$ as the union of a handlebody $H_0$ and a
compression body $H_1$, glued via a diffeomorphism $\psi$. For
standard parametrizations $\phi_0$ of $\bdy H_0$ and $\phi_1\amalg
\phi'_1$ of $\bdy H_1$ it is straightforward to compute
$\CFAa(H_0,\phi_0)$ and $\CFDAa(H_1,\phi_1\amalg \phi'_1)$, say. One
then factors $\phi_1^{-1}\circ\psi\circ
\phi_0=\psi_1\circ\cdots\circ\psi_n$ as a product of generators of the
mapping class groupoid, called \emph{arcslides}. One also factors
$\phi^{-1}\circ\phi'_1$ into arcslides
$\chi_1\circ\cdots\circ\chi_m$. The bimodules $\CFDDa(\psi_i)$ and
$\CFDDa(\chi_i)$ are complicated but can be described explicitly (cf.\
Section~\ref{sec:arcslides}); this
is the main work. One can then compute the type \DA\ bimodules for
these arcslides using some dualities (cf.\ Section~\ref{sec:identity}). Finally, one has
\begin{multline*}
  \CFDa(Y,\phi)\simeq \CFAa(H_0,\phi_0)\DT\CFDAa(\psi_1)\DT\cdots\DT\CFDAa(\psi_n)\DT\CFDAa(H_1,\phi_1\amalg\phi'_1)\\
  \DT\CFDAa(\chi_1)\DT\cdots\DT\CFDAa(\chi_{m-1})\DT\CFDDa(\chi_m).
\end{multline*}

Though it has not appeared in the literature, a similar strategy works
to compute bordered-sutured modules. There are a few more \emph{basic
  bordered-sutured pieces} that must be computed:
\begin{enumerate}
\item Arcslides between arc diagrams. This is a simple extension of the bordered computation~\cite{LOT4} (see Section~\ref{sec:arcslides}).
\item One- and two-handle attachments to the bordered boundary. The bimodules for $1$- and $2$-handle
  attachments are the same, except for which action corresponds to
  which boundary component. It will be sufficient to consider
  $1$-handles with both feet in the same
  bordered boundary component, and $2$-handles which do not disconnect
  a boundary component.

  The relevant bordered-sutured Heegaard diagrams are shown in
  Figure~\ref{fig:interiorHandle}.
  In the case of 1-handle attachments to a pointed matched circle, the
  bimodule was already computed~\cite[Section 8.1]{LOT4}, and the
  extension to arc diagrams is straightforward (see
  Section~\ref{sec:interior-handle}). (In fact, the pointed matched
  circle case suffices for us.)
\item Attaching a $1$-handle to $R_-$. The Heegaard diagram is shown in Figure~\ref{fig:RminusOneHandle}. Topologically, this cobordism is a product $[0,1]\times F(\PMC)$, but there is a (particular) $1$-handle $H$ in $F(\PMC)$ so that $R_-$ is $\bigl([0,1]\times S_-(F(\PMC))\bigr)\cup \bigl(\{1\}\times H\bigr)$. The set $R_+$ is $[0,1]\times S_+(F(\PMC))$. The bordered-sutured invariant can be deduced from the bordered-sutured invariant of the identity cobordism (See Section~\ref{sec:R-handles}).
\item Attaching a $1$-handle to $R_+$. The Heegaard diagram is shown in Figure~\ref{fig:RplusOneHandle}. This is similar to the previous case, except that $\{1\}\times H$ becomes part of $R_+$ instead of $R_-$. Again, the bordered-sutured invariant can be deduced from the bordered-sutured invariant of the identity cobordism (See Section~\ref{sec:R-handles}).
\item An $R_+$-cup or cap. The Heegaard diagram is shown in Figure~\ref{fig:cupInRminus}. Topologically, this cobordism is a product $[0,1]\times F(\PMC)$. All but one component of $R_-$ (respectively $R_+$) in the sutured boundary is of the form $[0,1]\times S_-(F(\PMC))$ (respectively $[0,1]\times S_+(F(\PMC))$). For a cup, one component of $R_-$ is a hexagon with three sides components of $\Gamma$, two sides on $\{1\}\times S_-(F(\PMC))$ and one side on $\{0\}\times S_-(F(\PMC))$, and one component of $R_+$ is a bigon with one side a component of $\Gamma$ and one side on $\{1\}\times S_-(F(\PMC))$. A cap is similar, except with two sides of the hexagon on $\{0\}\times S_-(F(\PMC))$ and one side on $\{1\}\times S_-(F(\PMC))$, and the bigon has boundary in $\{0\}\times S_-(F(\PMC))$.
These bordered-sutured invariants are easy to describe, given the invariants of the identity cobordism (See Section~\ref{sec:cup-cap-sutures}).
\item  An $R_-$-cup or cap. The Heegaard diagram is shown in Figure~\ref{fig:cupInRplus}. This is similar to the previous case, but with the roles of $R_\pm$ exchanged. Again, these bordered-sutured invariants are easy to describe (See Section~\ref{sec:cup-cap-sutures}).
\item Capping off a pointless bordered arc. Given an arc diagram
  $\PMC=(\{Z_j\},\mathbf{a},M)$ so that one of the arcs, say $Z_0$, has
  $Z_0\cap\mathbf{a}=\emptyset$, let $\PMC'=(\{Z_j\mid j\neq 0\},\mathbf{a},M)$ be the arc diagram
  consisting of all the arcs except $Z_0$. There is a bordered-sutured
  cobordism from $\PMC$ to $\PMC'$ which is the disjoint union of the
  identity cobordism of $\PMC'$ and a $3$-ball with one suture. The
  corresponding Heegaard diagram is shown in Figure~\ref{fig:pointless}. The
  bordered-sutured invariant for this  cobordism is easy to describe
  (see Section~\ref{sec:pointless}).
  There is also a dual operation, creating a pointless bordered arc, but we will not need this operation.
\end{enumerate}

\begin{figure}
  \centering
  \includegraphics{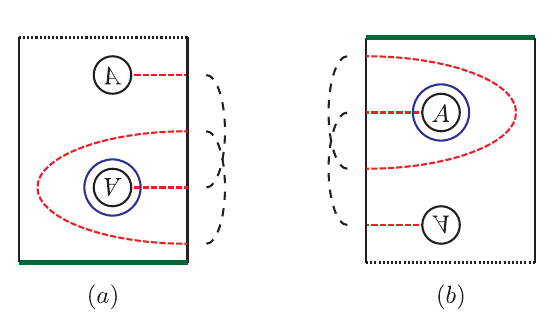}
  \caption{\textbf{Attaching a $1$- or $2$-handle to the interior of $Y$.} (a) Attaching a $1$-handle with both feet on the same boundary component. (b) Attaching a $2$-handle without disconnecting a boundary component. The $\beta$-circles are \textcolor{blue}{solid}, the $\alpha$-arcs are \textcolor{red}{dashed}, and the sutured boundary is \textcolor{darkgreen}{thick}. Beyond the dotted lines, each diagram is the same as the standard Heegaard diagram for the identity cobordism.}
  \label{fig:interiorHandle}
\end{figure}

\begin{figure}
  \centering
  \includegraphics{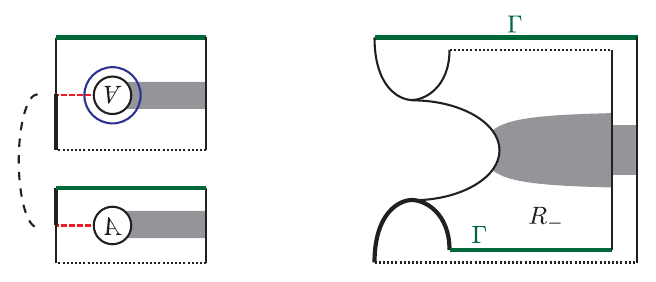}
  \caption{\textbf{Attaching a $1$-handle to $R_-$.} Left: part of a bordered-sutured Heegaard diagram corresponding to attaching a $1$-handle to $R_-$. The $\beta$-circles are \textcolor{blue}{solid}, the $\alpha$-arcs are \textcolor{red}{dashed}, and the sutured boundary is the \textcolor{darkgreen}{thick} horizontal lines.  Right: an illustration of attaching a $1$-handle to $R_-$. The shaded parts of the diagrams on the left and right correspond, as do the thick parts of the left boundaries.}
  \label{fig:RminusOneHandle}
\end{figure}

\begin{figure}
  \centering
  \includegraphics{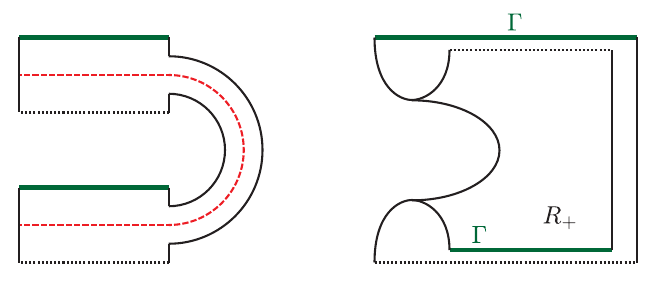}
  \caption{\textbf{Attaching a $1$-handle to $R_+$.} Left: part of a bordered-sutured Heegaard diagram corresponding to attaching a $1$-handle to $R_+$. Right: the corresponding $1$-handle being attached to $R_+$.}
  \label{fig:RplusOneHandle}
\end{figure}

\begin{figure}
  \centering
  \includegraphics{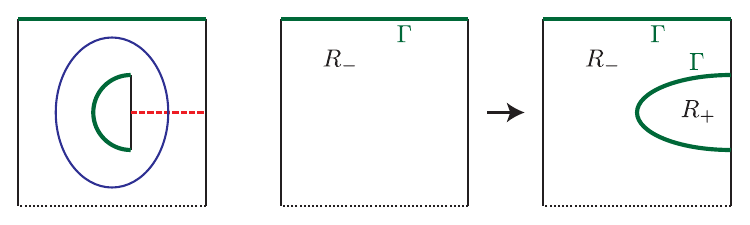}
  \caption{\textbf{A cup in $R_-$, i.e., an $R_+$-cup.} Left: the
    Heegaard diagram. Note that this piece of the diagram has two boundary components. Right: the effect on part of $R_-$. The rest of $R_\pm$ is unchanged. For a cap instead of a cup, rotate the diagram by $\pi$.}
  \label{fig:cupInRminus}
\end{figure}

\begin{figure}
  \centering
  \includegraphics{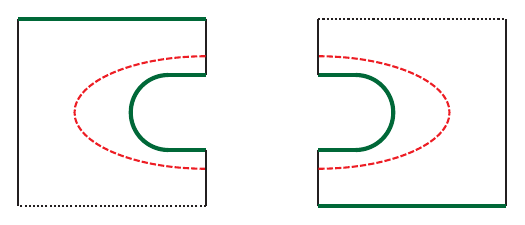}
  \caption{\textbf{A cup or cap in $R_+$, i.e., an $R_-$-cup.} Left: the Heegaard diagram for a cup. Right: the Heegaard diagram for a cap. The effect of the cup on $R_\pm$ is similar to Figure~\ref{fig:cupInRminus}, except with the roles of $R_\pm$ exchanged.}
  \label{fig:cupInRplus}
\end{figure}

\begin{figure}
  \centering
  \includegraphics{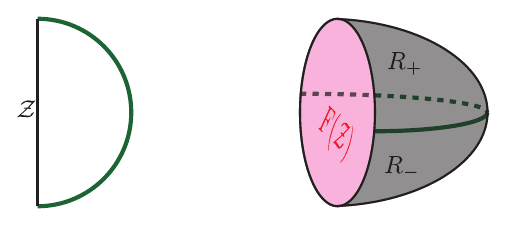}
  \caption{\textbf{Capping off a pointless bordered arc.} Left: the
    relevant component of the bordered-sutured Heegaard diagram. The
    bordered boundary is on the left, and thin, and the sutured
    boundary is on the right, and \textcolor{darkgreen}{thick}. The rest of the Heegaard
    diagram is the standard Heegaard diagram fro the identity map of
    $\PMC'$. Right: the non-identity component of the corresponding
    bordered-sutured cobordism. The bordered boundary is
    \textcolor{pink}{lightly} shaded and $R_\pm$ are darkly shaded.}
  \label{fig:pointless}
\end{figure}

Our first milestone is to prove that any bordered-sutured manifold can
be decomposed into these basic pieces. We start by showing that
arcslides generate the mapping class groupoid in the bordered-sutured
case. (In the connected boundary case, this is
well-known~\cite{Penner87:MCGroupoid,Bene08:ChordDiagrams,LOT4}.)
\begin{definition}
  The \emph{mapping class groupoid} has objects the arc diagrams $\PMC$. The morphism set $\Hom(\PMC_1,\PMC_2)$ is the set of isotopy classes of diffeomorphisms $F(\PMC_1)\to F(\PMC_2)$ taking the arcs $S_+\subset\bdy F(\PMC_1)$ homeomorphically to the arcs $S_+\subset F(\PMC_2)$.
\end{definition}
The mapping class groupoid is a disjoint union of groupoids, one for each topological type of sutured surface.

Given an arcslide from $\PMC_1$ to $\PMC_2$ there is a corresponding \emph{arcslide diffeomorphism} from $F(\PMC_1)$ to $F(\PMC_2)$ (see, e.g.,~\cite[Figure 3]{LOT4}). Abusing terminology, we will typically refer to the arcslide diffeomorphism as an arcslide.

\begin{lemma}
  The arcslide diffeomorphisms generate the mapping class groupoid.
\end{lemma}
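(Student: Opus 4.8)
The plan is to imitate the proof in the connected-boundary (pointed matched circle) case, due to Penner~\cite{Penner87:MCGroupoid} and Bene~\cite{Bene08:ChordDiagrams} and recast in the arcslide language in~\cite{LOT4}, taking care only to allow $Z$ to be a disjoint union of several intervals and $F(\PMC)$ to have several boundary components. The mapping class groupoid is a disjoint union of groupoids indexed by the homeomorphism type of the sutured surface, so I would fix one such type and a ``model'' arc diagram $\PMC_0$ of that type. For arc diagrams $\PMC_1,\PMC_2$ of this type the morphism set $\Hom(\PMC_1,\PMC_2)$ is nonempty and is a torsor over $\Hom(\PMC_0,\PMC_0)$, which is the mapping class group $G$ of $F(\PMC_0)$ preserving the decomposition $S_+\cup S_-$ of the boundary. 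Hence it suffices to prove: (i) every arc diagram $\PMC$ of the given type is joined to $\PMC_0$ by a finite sequence of arcslides; and (ii) $G$ is generated by the products of arcslide diffeomorphisms that begin and end at $\PMC_0$. Granting these, any morphism $\PMC_1\to\PMC_2$ agrees, up to an element of $G$, with the product of arcslides obtained by concatenating a sequence from $\PMC_1$ to $\PMC_0$ and one from $\PMC_0$ to $\PMC_2$ as in~(i), and the $G$-ambiguity is itself a product of arcslides by~(ii).

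For~(ii): the group $G$ is finitely generated by Dehn twists together with finitely many elementary homeomorphisms of the boundary (rotations within, and permutations among, boundary components of the same combinatorial type). Choosing a presentation of $F(\PMC_0)$ as a disk with bands adapted to $\PMC_0$, each generator can be written --- exactly as in~\cite{LOT4} --- as a product of elementary handle slides, i.e.\ of arcslides, returning $\PMC_0$ to itself; that argument is local near the support of the twist and never uses connectedness of $\bdy F$, so it applies verbatim. For~(i): an arc diagram $\PMC$ for $(F,\gamma)$ is equivalent to a cut system, namely a family of disjoint, properly embedded arcs in $F$ with endpoints on $S_+$ that cuts $F$ into disks (the cocores of the $1$-handles), together with the chord data recording the order in which these arcs meet $S_+$; an arcslide is precisely the effect of sliding one such endpoint past an adjacent one along $S_+$. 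So~(i) asserts that the graph whose vertices are arc diagrams of the given type and whose edges are arcslides is connected. I would prove this by induction on a complexity such as the number of matched pairs: in a diagram of positive complexity some arcslide either decreases the complexity or exhibits a separating arc along which $F$, equivalently $\PMC$, decomposes, after which one applies the inductive hypothesis to the pieces and reassembles; the base cases, the disk and the annulus, are immediate. Alternatively, one can merge the intervals $Z_i$ and the components of $F$ by adjoining bands --- themselves realized by arcslides inside a larger arc diagram --- so as to reduce~(i) to the known pointed matched circle statement.

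The step I expect to be the main obstacle is~(i): pinning down precisely the dictionary between arc diagrams and cut systems of arcs when $Z$ has several components and $F$ has several boundary components, and verifying in that generality that arcslides generate all changes of cut system (as opposed to the better-documented situation of a single boundary circle). Once this bookkeeping is set up, the reduction in~(i) and the twist-by-arcslides argument in~(ii) proceed as in~\cite{LOT4}, and the torsor argument above assembles them into the statement.
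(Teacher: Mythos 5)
Your proof takes a genuinely different route from the paper's, and the paper's route is substantially more economical. The paper does not split the problem into a ``connectedness of the arcslide graph'' part and a ``$G$ is generated by arcslide loops'' part. Instead, it runs a single Morse-theoretic (Cerf-theoretic) argument: given a diffeomorphism $\phi\co F(\PMC_1)\to F(\PMC_2)$ respecting markings, one chooses $\PMC$-compatible Morse functions $f_1$ and $f_2$ in Zarev's sense that agree near $S_+$, and connects $f_1$ to $f_2\circ\phi$ by a generic one-parameter family of Morse functions rel a neighborhood of $S_+$. For a generic metric, the finitely many parameter values at which Morse--Smale transversality fails --- by a gradient flow between two index-$1$ critical points --- are exactly the arcslides, and this factors $\phi$ directly. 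This argument makes no use of connectedness of $Z$ or of $\bdy F$ and so handles your steps~(i) and~(ii) simultaneously; it also sidesteps the torsor bookkeeping entirely.

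There are real gaps in your version that would require substantial work to close. The step you flag as the main obstacle, your~(i), is essentially the whole content of the lemma: the results of Penner and Bene, and the chord-diagram/arcslide reformulation in~\cite{LOT4}, are all stated for surfaces with a single boundary circle, and you give only a sketch of how to extend them. Your induction ``on a complexity such as the number of matched pairs'' is not actually a well-defined induction --- arcslides preserve the number of matched pairs, so there is no quantity that a single arcslide decreases --- and the ``decompose along a separating arc'' fallback is not made precise. Your alternative, merging the intervals $Z_i$ by adjoining bands to reduce to a pointed matched circle, is closer to something that could work (the paper uses a similar stabilization by the arc diagram $\PMC_b$ elsewhere, in Lemma~\ref{lem:arc-sub-pmc}), but you would still need to check that a factoring in the stabilized diagram descends to one for $\PMC$ itself, which is not automatic since arcslides in the big diagram can cross into the added bands. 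For~(ii), the claim that the~\cite{LOT4} argument is ``local near the support of the twist'' and hence applies verbatim is optimistic: the argument there relies on a specific handle presentation of a once-punctured genus-$k$ surface, and the generators of $G$ in the disconnected-boundary case include moves (rotations of boundary intervals, permutations of boundary components) that have no analogue in the connected case and would need to be treated separately. In short, your outline is plausible but leaves the heart of the lemma unproved, whereas the Morse-function family argument proves it in a few lines.
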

\begin{proof}
  Fix arc diagrams $\PMC_1$ and $\PMC_2$ and a diffeomorphism $\phi\co
  F(\PMC_1)\to F(\PMC_2)$ respecting the markings of the
  boundary. Choose Morse functions $f_i$ on $F(\PMC_i)$ compatible
  with the arc diagrams (\emph{$\PMC$-compatible Morse functions} in
  Zarev's language~\cite[Definition 2.3]{Zarev09:BorSut}), and so that
  there is a neighborhood $U$ of $\bdy F(\PMC_2)$ so that
  $f_1|_{\phi^{-1}(U)}=f_2\circ\phi|_{\phi^{-1}(U)}$. The functions
  $f_1$ and $f_2\circ\phi$ can be connected by a $1$-parameter family
  of Morse functions $f_t$, $t\in[0,1]$, all of which agree with $f_1$
  over $\phi^{-1}(U)$.
  (See, e.g.,~\cite{Sharko98:MorseOnSurf}.) For a generic choice of metric, there are finitely many $t$ for which  $f_t$ is not Morse-Smale, because of a flow between two index $1$ critical points. These are the arcslides.
\end{proof}

A second lemma allows us to restrict to bordered-sutured manifolds
with connected boundary. Given a bordered-sutured manifold
$\mathcal{Y}=(Y,\Gamma,\phi)$, a \emph{decomposing disk} is a disk $D$ in $Y$ with
boundary in the sutured boundary of $Y$, and so that $\bdy D\cap R_+$
and $\bdy D\cap R_-$ each consists of one arc. Given a decomposing
disk $D$, $Y\setminus \nbd(D)$ can be made into a bordered-sutured
manifold by including $\bdy\nbd(D)$ in the sutured boundary, and
adding a single arc in each of the components of $\bdy\nbd(D)$ to
$\Gamma$. The bordered boundary of $Y\setminus\nbd(D)$ inherits a
parametrization from $\mathcal{Y}$. We say that such bordered-sutured manifolds differ by a \emph{disk decomposition}.

\begin{lemma}\label{lem:disk-decomp}
  If $\mathcal{Y}$ and $\mathcal{Y}'$ are bordered-sutured manifolds which
  differ by a disk decomposition then $\BSD(\mathcal{Y})\simeq\BSD(\mathcal{Y}')$.
\end{lemma}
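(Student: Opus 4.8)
The plan is to realize the disk decomposition at the level of bordered-sutured Heegaard diagrams. The point is that a decomposing disk $D$ is a \emph{product disk} ($\bdy D$ meets each of $R_+$ and $R_-$ in a single arc), and such a disk can be made ``invisible'' to a suitable Heegaard diagram. First I would show that $D$ can be isotoped into standard position relative to some bordered-sutured Heegaard diagram $\HD=(\Sigma,\alphas,\betas,\mathbf{z})$ for $\mathcal{Y}$: namely, so that $D$ appears as a regular neighborhood of a properly embedded arc $c\subset\Sigma$ with $\bdy c\subset\mathbf{z}$ and $c$ disjoint from every $\alpha$- and $\beta$-curve and arc. Cutting $\Sigma$ along $c$, and modifying $\mathbf{z}$ in the evident way so that the two new boundary arcs become part of the sutured boundary, produces a bordered-sutured Heegaard diagram $\HD'$; unwinding the construction $\HD\mapsto\mathcal{Y}(\HD)$ identifies $\mathcal{Y}(\HD')$ with $\mathcal{Y}'$.

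Next I would compare the two diagrams. Since $c$ misses $\alphas\cup\betas$, the intersection points are unchanged, so $\Gens(\HD)=\Gens(\HD')$, and the idempotents they occupy agree. For the structure maps, I would observe that every domain on $\HD$ has coefficient $0$ along $c$: the coefficient is constant along $c$, because $c$ meets no $\alpha$- or $\beta$-curve, and it vanishes near either endpoint of $c$, because those endpoints lie on $\mathbf{z}$. Hence every domain contributing to $\delta^1$ on $\HD$ is supported in $\Sigma\setminus\nbd(c)$, that is, on the surface underlying $\HD'$, with literally the same moduli space of holomorphic curves; conversely the domains allowed on $\HD'$ — those vanishing near the new sutured boundary, which contains the two images of $c$ — are exactly these. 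Thus $\lsup{\Alg(\PMC)}\BSD(\mathcal{Y})$ and $\lsup{\Alg(\PMC)}\BSD(\mathcal{Y}')$ are computed by identical data, which gives the asserted homotopy equivalence.

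The main obstacle is the first step: producing a Heegaard diagram for $\mathcal{Y}$ in which the product disk $D$ is standard. This is the bordered-sutured counterpart of the realization of ``nice'' surface decompositions on sutured Heegaard diagrams, and I would prove it in the same spirit — build a Morse function on $Y$ compatible with the bordered-sutured structure for which $D$ is swept out by gradient trajectories passing through a standard local model between a pair of critical points of indices $1$ and $2$ — rather than redoing surface-decomposition theory from scratch.

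As an alternative, one can deduce the lemma from Zarev's pairing theorem with no Heegaard diagrams at all. A regular neighborhood of $D$, cut out with its induced sutures, is the identity bordered-sutured cobordism $[0,1]\times F(\PMC_0)$, where $\PMC_0$ is the one-interval arc diagram with no marked points, so that $F(\PMC_0)$ is a disk with two suture points and $\Alg(\PMC_0)\cong\Ftwo$. Passing from $\mathcal{Y}'$ to $\mathcal{Y}$ amounts to gluing the two copies of $D$ in $\bdy_s\mathcal{Y}'$ back together along $F(\PMC_0)$, and since $\Alg(\PMC_0)\cong\Ftwo$ the relevant gluing bimodule is free of rank one, so $\DT$ leaves $\BSD$ unchanged. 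The only care needed there is to present this self-gluing as an honest instance of the pairing theorem — for instance by first using the (trivial) operation promoting a sutured disk with one suture to a bordered boundary arc $F(\PMC_0)$ and then gluing in a trivial cap — but this is routine.
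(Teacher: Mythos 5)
Your main argument is essentially the paper's proof: choose a bordered-sutured Heegaard diagram in which the decomposing disk $D$ meets $\Sigma$ in a single arc $\eta$ with endpoints on $\mathbf{z}$ and disjoint from $\alphas\cup\betas$, then cut $\Sigma$ along $\eta$ to get a diagram for $\mathcal{Y}'$. The paper simply asserts that the generators and differential ``are exactly the same''; your observation that every domain has coefficient zero along $\eta$ (constant along $\eta$ since $\eta$ meets no $\alpha$- or $\beta$-curve, and zero at the endpoints since they lie on $\mathbf{z}$) is the correct justification for that assertion, and it is good that you spelled it out. The Morse-theoretic argument you sketch for putting $D$ in standard position is also the right idea and fills a step the paper leaves implicit. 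Note that the paper also offers a one-line alternative: this is a special case of Zarev's surface decomposition theorem \cite[Theorem 10.6]{Zarev09:BorSut}.

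Your second, pairing-theorem route is a genuinely different argument, and the algebraic core is right: for the one-interval arc diagram $\PMC_0$ with no marked points, $\Alg(\PMC_0)\cong\Ftwo$, so whatever gluing bimodule appears is rank one and $\DT$ with it does nothing, and the twisting slice $\TW_{F(-\PMC_0),-}$ is trivial since there is nothing to twist. The place where you wave your hands --- ``present this self-gluing as an honest instance of the pairing theorem'' --- is actually the nontrivial point. Zarev's pairing theorem composes two \emph{distinct} cobordisms, whereas recovering $\mathcal{Y}$ from $\mathcal{Y}'$ glues two boundary disks of the \emph{same} manifold; you need to re-express this as an external gluing (for example, present $\mathcal{Y}$ as the union of $\mathcal{Y}'$ with a $1$-handle cobordism along its two copies of $F(\PMC_0)$), and you need the ``promotion'' of a one-suture sutured disk to a bordered $F(\PMC_0)$-boundary, which is not among the operations Zarev records. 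None of this is an obstruction --- indeed it is related to the ``capping off a pointless bordered arc'' operation developed later in this paper --- but it is more machinery, not less, than the direct Heegaard-diagram argument, so I would keep the first proof as the primary one.
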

\begin{proof}
  This is a special case of Zarev's surface decomposition
  theorem~\cite[Theorem 10.6]{Zarev09:BorSut}, but is also
  easy to see directly. We can find a bordered-sutured
  Heegaard diagram $(\Sigma,\alphas,\betas)$ for $\mathcal{Y}$ so that
  $D\cap \Sigma$ is a single arc $\eta$ with $\bdy\eta$ contained
  in the sutures of $\Sigma$ and so that $\eta$ is disjoint from
  $\alphas$ and $\betas$. Then
  $(\Sigma\setminus\nbd(\eta),\alphas,\betas)$ is a bordered-sutured
  Heegaard diagram for $\mathcal{Y}'$, where we include $\bdy\nbd(\eta)$ as
  part of the sutured boundary of $\Sigma\setminus\nbd(\eta)$. If we choose corresponding
  almost complex structures, the generators and differential for
  $\BSD(\Sigma,\alphas,\betas)$ and
  $\BSD(\Sigma\setminus\nbd(\eta),\alphas,\betas)$ are exactly the
  same.
\end{proof}

\begin{figure}
  \centering
  \includegraphics{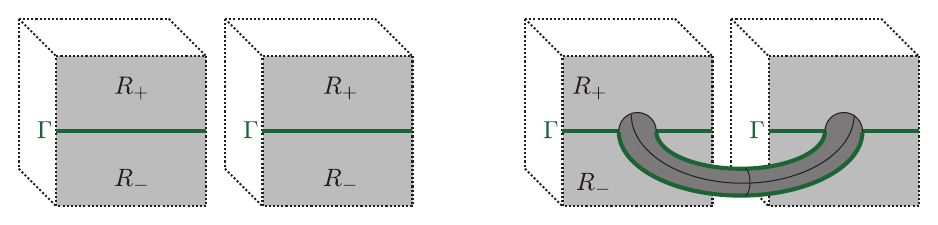}
  \caption{\textbf{Attaching a 3-dimensional $1$-handle to $\Gamma$.}
    Left: part of a bordered-sutured cobordism $Y$. The part of
    $\bdy Y$ shown is lightly shaded. Outside the dashed cubes, $Y$ is
    arbitrary. Right: the result of attaching a $1$-handle to connect
    these two components of $\bdy Y$.}
  \label{fig:connect-bdy-3d}
\end{figure}

\begin{figure}
  \centering
  \includegraphics{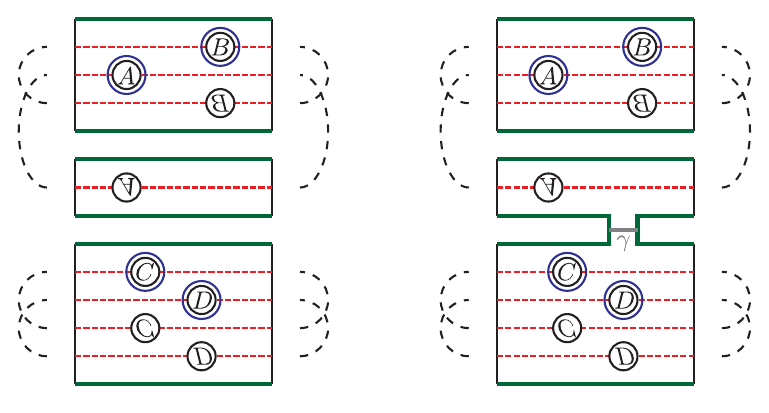} 
  \caption{\textbf{Making the boundary connected.} Left: a bordered-sutured Heegaard diagram for a bordered-sutured manifold with two boundary components. Right: a bordered-sutured Heegaard diagram for a bordered-sutured manifold with connected boundary, and an arc $\eta$, so that a disk decomposition along $\eta$ gives the diagram on the left. }
  \label{fig:connect-bdy}
\end{figure}

In particular, given any bordered-sutured manifold $\mathcal{Y}=(Y,\Gamma,\phi)$ one can attach 
$3$-dimensional $1$-handles to $\bdy Y$ with attaching points on $\Gamma$ to make $\bdy Y$ connected,
without changing $\BSD(\mathcal{Y})$. In this operation, each attaching disk intersects $\Gamma$ in an arc, and $\Gamma$ changes by surgery  at the attaching points so that these arcs are replaced with two parallel arcs that go over the $1$-handle and the result is a bordered-sutured manifold. See Figure~\ref{fig:connect-bdy-3d} for an
illustration of this operation and Figure~\ref{fig:connect-bdy} for the
corresponding operation on Heegaard diagrams.

Thus, it suffices to
compute $\BSD(\mathcal{Y})$ where $\mathcal{Y}$ has connected boundary.
\begin{lemma}\label{lem:decompose-bs}
  Up to disk decomposition, any bordered-sutured manifold
  $\mathcal{Y}=(Y,\Gamma,\phi\co F(\PMC)\to \bdy Y)$ can be decomposed as a union of the seven basic
  bordered-sutured pieces above.
\end{lemma}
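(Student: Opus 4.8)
The plan is to adapt to the bordered-sutured setting the strategy of \cite[Section~8]{LOT4}, where a closed bordered $3$-manifold is cut into a handlebody, a compression body, and a sequence of arcslides; here everything is done at the level of Heegaard diagrams. First, by the discussion preceding the lemma, attaching $3$-dimensional $1$-handles to $\Gamma$ connects up $\bdy Y$, and by Lemma~\ref{lem:disk-decomp} this alters $\mathcal{Y}$ only by disk decompositions, so we may assume $\bdy Y$ is connected. Fix a bordered-sutured Heegaard diagram $\mathcal{H}=(\Sigma,\alphas,\betas,\mathbf{z})$ for $\mathcal{Y}$, with $\PMC$ the arc diagram parametrizing the bordered boundary; recall that $Y$ is built from $\Sigma\times[0,1]$ by attaching $2$-handles along the $\alpha$- and $\beta$-circles, with the bordered-sutured structure read off from $\mathbf{z}$ and the $\alpha$-arcs.

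Next, choose a Morse function $h\co\Sigma\to[0,1]$ with $h^{-1}(0)$ a collar of the bordered boundary, generic with respect to the $\alpha$- and $\beta$-circles, the $\alpha$-arcs, and the features of $\mathbf{z}$ (the arcs of $\mathbf{z}^a$ and the circles of $\mathbf{z}^c$), and with all of these ``events'' occurring at distinct regular levels. Decomposing $\Sigma$ into the slabs $h^{-1}([t_i,t_{i+1}])$ between consecutive events --- together with the product region, the attached $2$-handles, and the sutured data --- exhibits $\mathcal{Y}$ as a composition of elementary bordered-sutured cobordisms, one per event; the slab near $\bdy_L\Sigma$ carrying no event is an identity cobordism, which we absorb into its neighbor. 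The heart of the argument, carried out as in the closed bordered case \cite{LOT4} but with more cases, is to show that, after reparametrizing the incoming and outgoing bordered boundary of each elementary cobordism by arcslides, it becomes one of the seven basic pieces: a critical point of $h$ on $\Sigma$ or the passage of an $\alpha$- or $\beta$-circle gives the interior $1$- and $2$-handle attachments of Figure~\ref{fig:interiorHandle}; the passage of the features of $\mathbf{z}$ gives the $R_\pm$-handle attachments of Figures~\ref{fig:RminusOneHandle} and~\ref{fig:RplusOneHandle} and the $R_\pm$-cups and caps of Figures~\ref{fig:cupInRminus} and~\ref{fig:cupInRplus}; and any boundary component of $F(\PMC)$ that ends up carrying no $\alpha$-arcs is removed by the pointless-arc cap of Figure~\ref{fig:pointless}. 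Since each arcslide is itself one of the seven basic pieces, and the arcslide diffeomorphisms generate the mapping class groupoid (shown above), the entire composition is a composition of basic pieces.

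I expect the main obstacle to be this last identification. As in \cite[Section~8]{LOT4}, one must arrange --- by isotoping and handle-sliding the $\alpha$- and $\beta$-curves, each slide of a curve across the bordered boundary being realized as an arcslide of the parametrizing arc diagram --- that each elementary slice is \emph{literally} one of the standard local Heegaard diagrams, and one must check that every intermediate slice remains a valid bordered-sutured Heegaard diagram: in particular, that $\mathbf{z}$ continues to meet every component of the sliced surface and of its complements with $\alphas$ and $\betas$, and that the decomposition $\bdy_sY=R_+\cup_\Gamma R_-$ persists. The new feature relative to \cite{LOT4} is the $\mathbf{z}$-region together with the $R_\pm$-decomposition; this both enlarges the list of local models one must recognize and makes the interplay between the bordered boundary and the sutured region the delicate point. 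It is a routine but lengthy elaboration of the argument there, which is why it is only sketched here.
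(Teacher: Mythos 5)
Your proposal takes a genuinely different route from the paper's proof. The paper first parametrizes the entire (now-connected) boundary $\bdy Y$ by a pointed matched circle $\PMC'$, thereby splitting $\mathcal{Y}$ into a purely bordered manifold $(Y,\psi)$ glued to a topologically trivial bordered-sutured cobordism $Y'\cong [0,1]\times F(\PMC')$ that carries only the change in sutured structure. It then invokes~\cite[Section~8]{LOT4} to decompose $(Y,\psi)$ into interior $1$- and $2$-handle attachments and arcslides, and uses a Morse function on the $2$-complex $\bdy_s Y'$ (with critical points required to lie in $\mathrm{int}(R_\pm)$ and on $\Gamma$) to produce precisely the remaining five ``sutured'' pieces. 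Because the Morse function lives on $\bdy_s Y'$, its critical points correspond directly and unambiguously to the $R_\pm$-handle attachments, cups/caps, and pointless-arc caps; no hard matching lemma is needed. You instead work entirely on the Heegaard surface $\Sigma$ with a single Morse function, which unifies the picture but forces you to re-derive, rather than cite, the bordered factorization of~\cite{LOT4} and to handle the sutured data through ``events'' coming from critical points of $h$ restricted to various objects on and near $\bdy\Sigma$.

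The main concern is that the crux of your proof --- the claim that each elementary slice is, after arcslide reparametrizations, literally one of the seven standard local models --- is asserted rather than established, and I do not think the correspondence you list is as clean as stated. For instance, a slab containing an index-$1$ critical point of $h$ but no $\alpha$- or $\beta$-circle gives a \emph{product} piece of $Y$ whose bordered boundary changes topology; this is not the same local model as attaching an interior $1$-handle along a curve, and it is not obvious which of the seven pieces it should be reparametrized into. Likewise, ``the passage of the features of $\mathbf{z}$'' is delicate because $\mathbf{z}\subset\bdy\Sigma$ and so a generic Morse function on $\Sigma$ does not sweep past it in the interior; making precise how the critical points of $h|_{\bdy\Sigma\setminus(\bdy_L\Sigma\sqcup\bdy_R\Sigma)}$ produce the $R_\pm$-handles, cups, caps, and pointless arcs is exactly the content one would have to supply. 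You also omit the paper's reduction to \emph{connected sutured boundary} (only the connectedness of $\bdy Y$ is mentioned), which matters for the later Morse-theoretic argument. None of these objections shows the approach is unworkable, but the ``routine but lengthy elaboration'' you defer is where essentially all of the new difficulty of the lemma lives, and the paper's two-step split is designed precisely so that this matching problem disappears: the bordered events are handled by citation, and the sutured events fall directly out of Morse theory on $\bdy_s Y'$.
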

\begin{proof}  
  Since attaching a $1$-handle to $\Gamma$ is the inverse of a disk
  decomposition, we may assume that $\bdy Y$ is connected and that the
  sutured part of $\bdy Y$ is connected.

  Choose some parametrization $\psi\co F(\PMC')\to\bdy Y$ of $\bdy Y$
  by the surface associated to a pointed matched circle $\PMC'$.  Then we can view
  $\mathcal{Y}$ as the composition of the bordered manifold $(Y,\psi)$
  and a bordered-sutured cobordism $(Y',\Gamma')$ from $F(\PMC')$ to
  $F(\PMC)$ so that $Y'$ is topologically a cylinder
  $[0,1]\times F(\PMC')$.

  From the bordered case~\cite[Section 8]{LOT4}, we can decompose
  $(Y,\psi)$ as a composition of arcslides and $1$- and $2$-handle
  attachments to the bordered boundary. It remains to decompose $Y'$
  into $1$- and $2$-handle attachments to $R_\pm$, $R_\pm$ cups and
  caps, capping pointless arcs, and arcslides. To this end, let $\bdy_sY'\subset \bdy Y'$ denote the sutured
  boundary. Choose a Morse function $f\co \bdy_sY'\to[0,1]$ so that
  $f^{-1}(0)=\bdy_sY'\cap F(\PMC')$, $f^{-1}(1)=\bdy_sY'\cap F(\PMC)$, $f$ has
  no index $0$ critical points, and each critical level of
  $f$ has a single critical point.
  By a possibly large perturbation of the sutures, we can arrange that
  for each $t\in[0,1]$ and each connected component $C$ of
  $f^{-1}(t)$, $C\cap \Gamma\neq \emptyset$. Further perturbing the
  sutures slightly, we may assume that:
  \begin{enumerate}
  \item If $p_1,\dots,p_k$ are the critical points of $f$ then each
    $p_i$ is in the interior of either $R_+$ or $R_-$ (i.e., $p_i\not\in\Gamma$).
  \item The restriction of $f$ to $\Gamma$ is a Morse function, with
    critical points $q_1,\dots,q_\ell$, say.
  \item The real numbers $f(p_1),\dots,f(p_k),f(q_1),\dots,f(q_\ell)$
    are all distinct.
  \end{enumerate}
  Choose $0=t_0<t_1<\dots<t_{k+\ell}=1$ so that each
  $f^{-1}((t_i,t_{i+1}))$ contains exactly one $p_i$ or $q_i$. Then
  $f^{-1}([t_i,t_{i+1}])$ corresponds to a $1$-handle attachment to
  $R_+$ or $R_-$ (for a $p_i$) or an $R_+$- or $R_-$-cup or cap or
  capping off a pointless bordered arc (for a
  $q_i$). Let
  \[
    \mathcal{Y}_i=(Y_i,\ \phi_{i,L}\co -F(\PMC_{i,L})\to \bdy_L Y_i,\ \phi_{i,R}\co F(\PMC_{i,R})\to \bdy_R Y_i)
  \]
  be the corresponding bordered-sutured cobordism (parametrized as in
  Figures~\ref{fig:RminusOneHandle}--\ref{fig:pointless}, above). 
  For
  $i=1,\dots,k+\ell-2$, let $\mathcal{Y}_{i,i+1}$ be the mapping cylinder of a
  composition of arclides from $\phi_{i,R}$ to $\phi_{i+1,L}$. Let
  $\mathcal{Y}_{k+\ell-1,k+\ell}$ be the mapping cylinder of a sequence of arcslides from $\phi_{k+\ell}$ to $\phi$. Then,
  \[
    \mathcal{Y}=(Y,\psi)\cup \mathcal{Y}_1\cup \mathcal{Y}_{1,2}\cup \mathcal{Y}_2\cup\cdots\cup \mathcal{Y}_{k+\ell-1,k+\ell}\cup \mathcal{Y}_{k+\ell-1}\cup \mathcal{Y}_{k+\ell-1,k+\ell},
  \]
  as bordered-sutured manifolds.  This completes the proof.
\end{proof}

Our next task is to compute the bordered-sutured modules associated to
the seven basic bordered-sutured pieces.  We start with some tools for
deducing bordered-sutured computations from bordered computations.
\begin{definition}
  Let $\PMC=(Z,\mathbf{a},M)$ and $\PMC'=(Z',\mathbf{a}',M')$ be
  arc diagrams.
  We say that
  $\PMC$ is a \emph{subdiagram} of $\PMC'$ if there is an embedding
  $\phi\co Z\to Z'$ so that $\phi(\mathbf{a})\subset \mathbf{a}'$ and
  $M$ is induced from $M'$. Note that we do not require that $\phi$ be
  proper, i.e., send the boundary of $Z$ to the boundary of $Z'$,
  or that $\phi^{-1}(\mathbf{a}')=\mathbf{a}$. If
  $\phi(\mathbf{a})=\mathbf{a}'$ (so, in particular,
  $\phi^{-1}(\mathbf{a}')=\mathbf{a}$) then we say that $\PMC$ is a
  \emph{full subdiagram} of $\PMC'$.

  Similarly, let $\HD=(\Sigma,\alphas,\betas,\z)$ and
  $\HD'=(\Sigma',\alphas',\betas',\z')$ be bordered-sutured Heegaard
  diagrams. We say that $\HD$ is a \emph{subdiagram} of $\HD'$ if
  there is an embedding $\phi\co \Sigma\to \Sigma'$ with the following
  properties:
  \begin{enumerate}
  \item $\phi$ sends the bordered boundary of $\Sigma$ to the bordered
    boundary of $\Sigma'$.
  \item $\phi(\alphas)\subset\alphas'$.
  \item $\phi(\betas)=\betas'$ and $\phi(\alphas)$ contains every
    circle in $\alphas'$.
  \end{enumerate}
  If further $\phi(\alphas)=\alphas'$ and then we say that $\HD$
  is a \emph{full subdiagram} of $\HD'$.
\end{definition}
In other words, a full subdiagram is obtained by turning some of the
bordered boundary of $\Sigma$ into sutured boundary, and in a
non-full subdiagram one can also forget some $\alpha$-arcs. 
See Figure~\ref{fig:subdiag} for some examples of subdiagrams.

\begin{figure}
  \centering
  \includegraphics{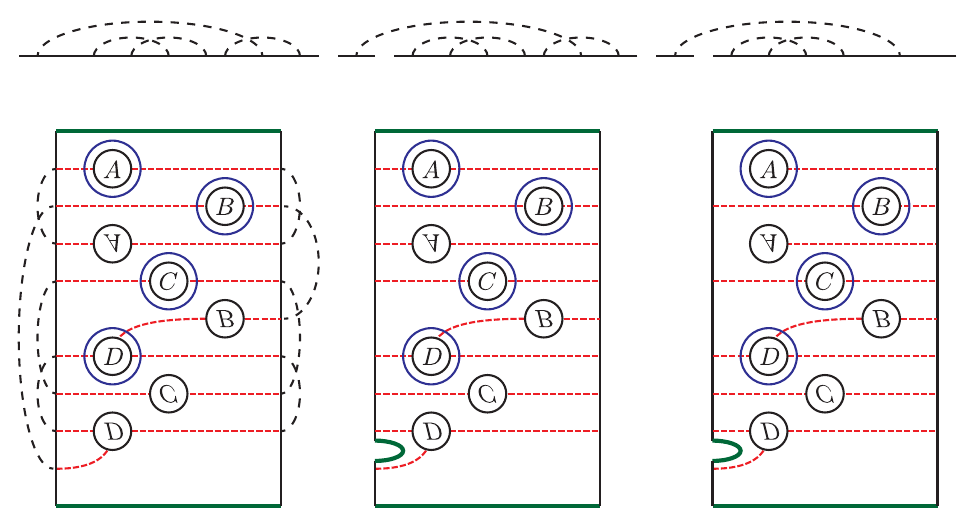}
  \caption{\textbf{Examples of subdiagrams of arc diagrams and Heegaard diagrams.} Top: a pointed matched circle $\PMC$, a full subdiagram of $\PMC$, and a non-full subdiagram of $\PMC$. Bottom: the standard Heegaard diagram for an arcslide, a full subdiagram of this Heegaard diagram, and a non-full subdiagram of this Heegaard diagram.}
  \label{fig:subdiag}
\end{figure}

If $\PMC$ is a subdiagram of $\PMC'$ then there is an injective
homomorphism $i\co\Alg(\PMC)\to \Alg(\PMC')$ obtained by regarding a
strand diagram in $\PMC$ as lying in $\PMC'$. If $\PMC$ is a full
subdiagram of $\PMC'$ then there is also a projection map
$p\co\Alg(\PMC')\to\Alg(\PMC)$ which is the identity on strand
diagrams contained entirely in $\PMC$ and sends any strand diagram not
entirely contained in $\PMC$ to $0$. Associated to $i$ is a
\emph{restriction of scalars} functor
\[
  i^*\co \ModCat_{\Alg(\PMC')}\to\ModCat_{\Alg(\PMC)}
\]
and associated to $p$ is an \emph{extension of scalars functor}
\begin{align*}
  p_*\co \lsup{\Alg(\PMC')}\ModCat&\to \lsup{\Alg(\PMC)}\ModCat\\
  \lsup{\Alg(\PMC')}P&\mapsto \lsup{\Alg(\PMC)}[p]_{\Alg(\PMC')}\DT\lsup{\Alg(\PMC')}P,
\end{align*}
where $\lsup{\Alg(\PMC)}[p]_{\Alg(\PMC')}$ denotes the rank $1$ type
\DA\ bimodule associated to $p$~\cite[Definition 2.2.48]{LOT2}.
\begin{lemma}\label{lem:induct-or-restrict}
  If $\HD$ is a subdiagram of $\HD'$ then
  $\BSA(\HD)\cong i^*\BSA(\HD')$. If $\HD$ is a full subdiagram of
  $\HD'$ then $\BSD(\HD)\cong p_*\BSD(\HD')$.
\end{lemma}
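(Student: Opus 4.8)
The plan is to establish both isomorphisms at the chain level, by a direct comparison of generators and of holomorphic curve counts for $\HD$ and $\HD'$, using the geometric picture that $\HD$ is obtained from $\HD'$ by (in the non-full case) deleting some $\alpha$-arcs and then absorbing the portion $B$ of the bordered boundary of $\Sigma'$ on which they lay---together, in the full case, with some $\alpha$-arc-free portion---into the basepoint region $\mathbf{z}$. For the generators: in the full subdiagram case $\phi(\alphas)=\alphas'$ and $\phi(\betas)=\betas'$, so $\alphas'$ and $\betas'$ lie entirely in $\phi(\Sigma)$, every point of $\alphas'\cap\betas'$ lies in $\phi(\Sigma)$, and $\phi$ gives a bijection between the generators of $\HD$ and of $\HD'$; since $\phi(\mathbf{a})=\mathbf{a}'$, this bijection identifies matched pairs and hence idempotents. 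Unwinding $p_*\lsup{\Alg(\PMC')}P=\lsup{\Alg(\PMC)}[p]_{\Alg(\PMC')}\DT\lsup{\Alg(\PMC')}P$ shows that $p_*\BSD(\HD')$ has the same underlying $\Idem(\PMC)$-module as $\BSD(\HD')$ (idempotents relabelled via $\phi$), with type $D$ operation obtained by applying $1\otimes p$, i.e.\ discarding every term whose algebra coefficient is not supported in $\PMC$. For the type $A$ statement, where $\HD$ need only be a possibly non-full subdiagram, $i^*\BSA(\HD')$ is $\BSA(\HD')$ with its action restricted along $i$ and cut down by the idempotent $i(1_{\Alg(\PMC)})$; a generator of $\BSA(\HD')$ survives this exactly when it occupies only $\alpha$-arcs in the image of $\phi$, and such generators biject with the generators of $\BSA(\HD)$.

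Next, the structure maps. A domain for $\HD'$ is a nonnegative relative $2$-chain, and any holomorphic curve representing it has an algebra output, read off from its Reeb asymptotics along the bordered boundary. The crucial point is that this output lies in $\Alg(\PMC)\subset\Alg(\PMC')$---equivalently, for type $D$, is fixed by $p$---if and only if the curve has no asymptotics along $B$, which by positivity of multiplicities forces the domain to be disjoint from a collar of $B$, i.e.\ to be supported in the subsurface of $\Sigma'$ surviving in $\HD$; conversely, every curve for $\HD$ includes into such a curve for $\HD'$. Together with the generator identification and the fact that (provincial) admissibility passes from $\HD'$ to $\HD$ (only fewer periodic and provincial domains appear), this matches the moduli spaces counted by $\BSD(\HD)$ (resp.\ $\BSA(\HD)$) with those counted by $\BSD(\HD')$ (resp.\ $\BSA(\HD')$) whose output lies in $\PMC$ (resp.\ in $\Alg(\PMC)$), giving $\BSD(\HD)\cong p_*\BSD(\HD')$ and $\BSA(\HD)\cong i^*\BSA(\HD')$ as chain-level identities and hence as the asserted homotopy-equivalence classes.

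I expect the main obstacle to be making precise the geometric input of the previous paragraph: verifying that a holomorphic curve for $\HD'$ with output supported in $\PMC$ really has domain disjoint from the excised collar of $B$, so that restricting it to the surviving subsurface produces exactly a holomorphic curve for $\HD$, and that this correspondence preserves (mod $2$) counts. This is a routine positivity-and-compactness argument, of the same flavour as the standard fact that adjoining a basepoint region to a Heegaard diagram leaves the generators unchanged and only restricts which curves contribute; the slight extra bookkeeping in the non-full case---tracking the deleted $\alpha$-arcs and the generators of $\BSA(\HD')$ they kill---is of the same nature. Once this is settled, the matching of the algebraic structures ($p_*$ for type $D$, $i^*$ for type $A$) is purely formal. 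One could instead glue $\HD'$ to a standard piece that converts $B$ into sutured boundary and invoke the bordered-sutured pairing theorem, but since the point of the lemma is precisely to reduce bordered-sutured computations to bordered ones, the direct comparison above is the appropriate route.
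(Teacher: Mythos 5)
The paper's entire proof of this lemma is the single sentence ``This is immediate from the definitions.'', so there is no alternative argument in the paper to compare against. Your detailed unwinding---the bijection of generators, the observation that a curve for $\HD'$ whose domain meets the excised collar of $B$ necessarily carries Reeb asymptotics along $B$ (hence its algebra output is killed by $p$ in the type~$D$ case, or never fed in after restriction along $i$ in the type~$A$ case), and the resulting chain-level identification---is exactly what that sentence compresses, and it is correct; your closing remark that one could instead glue a standard bordered-sutured piece and invoke the pairing theorem, but that this would be circular given the lemma's purpose, is also apt.
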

(Compare~\cite[Sections 6.1 and 6.2]{LOT2}.)
\begin{proof}
  This is immediate from the definitions.
\end{proof}

\begin{figure}
  \centering
  \includegraphics{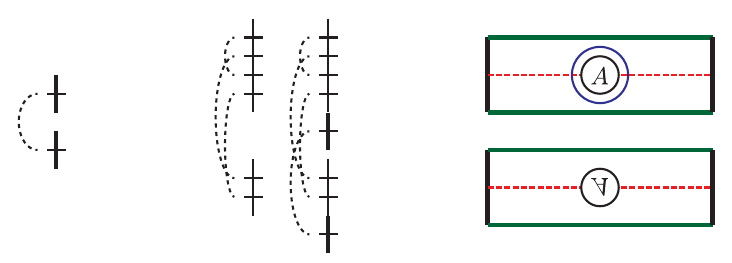}
  \caption{\textbf{The diagram $\PMC_b$ and its use.} Left: the arc
    diagram $\PMC_b$. Center: an arc diagram which is not a full
    subdiagram of a pointed matched circle and the result of gluing
    this diagram to $\PMC_b$, to give an arc diagram which is a full
    subdiagram of a pointed matched circle. Right: the standard
    Heegaard diagram for the identity cobordism of $\PMC_b$.}
  \label{fig:PMCb}
\end{figure}

Let $\PMC_b$ be the arc diagram with two intervals so that
$\mathbf{a}$ has a single point on each. See Figure~\ref{fig:PMCb}.

\begin{lemma}\label{lem:arc-sub-pmc}
  Let $\PMC$ be an arc diagram. Then there is a pointed matched circle
  $\overline{\PMC}$ so that either $\PMC$ or $\PMC\amalg \PMC_b$ is a
  full subdiagram of $\overline{\PMC}$.
\end{lemma}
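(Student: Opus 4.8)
The plan is to induct on the number $m$ of intervals of $\PMC$, after normalizing the number of marked points. Since $|\mathbf{a}(\PMC_b)|=2$ and $F(\PMC_b)$ is a disk, replacing $\PMC$ by $\PMC\amalg\PMC_b$ when $|\mathbf{a}(\PMC)|\equiv 2\pmod 4$ corrects the parity of the point count without changing anything essential; as $|\mathbf{a}|$ is always even, it therefore suffices to show that any arc diagram with $|\mathbf{a}|=4\ell$ marked points is a full subdiagram of a genus-$\ell$ pointed matched circle. A candidate is produced by choosing a cyclic order of (and orientation on) the intervals $Z_1,\dots,Z_m$, joining consecutive ones by arcs $e_1,\dots,e_m$ disjoint from $\mathbf{a}$, and putting the basepoint on one $e_i$; the matching is inherited, so the only thing to arrange is that the resulting surface $\overline{F}$ has exactly two boundary components. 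It will then automatically be connected of genus $\ell$: attaching the $2\ell$ one-handles to the annulus $[0,1]\times S^1$ gives $\chi(\overline{F})=-2\ell$, and the bands $e_1,\dots,e_m$ form a cycle through all the rectangles, so $\overline{F}$ is connected since each component of $F(\PMC)$ contains a rectangle.

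For the base case $m=1$, I would form $\overline{\PMC}$ by gluing the two ends of $Z_1$. Here $F(\PMC)$ is one rectangle with $2\ell$ one-handles, hence connected with $\chi=1-2\ell$; nondegeneracy bounds its number of boundary circles by $2$ (each must contain one of the two points of $\bdy Z_1$), and the parity of $\chi$ forces that number to be $1$, so the single band added to close up $Z_1$, being attached orientation-compatibly, splits that boundary circle and $\overline{F}$ has two boundary components.

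For the inductive step I would use the claim: if $\PMC$ is a valid arc diagram with $m\geq 2$ intervals, some two intervals $Z_i,Z_j$ can be joined by an arc $e$ disjoint from $\mathbf{a}$ (connecting an endpoint of $Z_i$ to an endpoint of $Z_j$) so that $\PMC'$, obtained by replacing $Z_i,Z_j$ with $Z_i\cup e\cup Z_j$, is again a valid arc diagram. Granting this, $\PMC'$ has $m-1$ intervals and the same marked points, so by induction it is a full subdiagram of a pointed matched circle $\overline{\PMC}$; since $\PMC$ is recovered from $\PMC'$ by cutting one interval at an interior point of $e$ — the reverse of a gluing — $\PMC$ is a full subdiagram of the same $\overline{\PMC}$. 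To prove the claim, note $F(\PMC)$ has no closed components, and every boundary circle carries at least one $S_+$-arc $\{1\}\times Z_k$ by nondegeneracy (the points of $\bdy Z$ are exactly the endpoints of these arcs). If $F(\PMC)$ has $\geq 2$ boundary circles, choose $Z_i,Z_j$ whose $S_+$-arcs lie on distinct circles and join any of their endpoints: the band merges those two circles, and the merged circle still carries the unused endpoint of $Z_i$, a point of $\bdy Z(\PMC')$, so nondegeneracy persists. If $F(\PMC)$ has a single boundary circle $B$, then $b_0\equiv\chi(F(\PMC))=m-2\ell\equiv m\pmod 2$ forces $m$ odd, so $m\geq 3$; take three $S_+$-arcs $Z_a,Z_b,Z_c$ consecutive along $B$ and join the endpoint of $Z_a$ facing $Z_b$ to the endpoint of $Z_c$ facing $Z_b$ — the band pinches off a circle containing $Z_b$'s $S_+$-arc (hence a point of $\bdy Z(\PMC')$), while the new arc $Z_a\cup e\cup Z_c$ lies on the complementary circle, which still meets $\bdy Z(\PMC')$. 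So $\PMC'$ is valid in either case.

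The main obstacle is precisely the one-boundary-circle case of the claim: one must choose $e$ so that neither piece of the split boundary circle loses all of its marked points, which is what dictates the "join endpoints flanking an intermediate $S_+$-arc" prescription; the parity bookkeeping, the $\PMC_b$ reduction, and the base case are all routine once this is in place.
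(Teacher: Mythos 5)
Your proof is correct and follows the same basic strategy as the paper's: reduce the number of intervals of $\PMC$ one at a time by gluing an endpoint of one interval to an endpoint of another, keeping nondegeneracy, with $\PMC_b$ brought in to fix the case that would otherwise obstruct the reduction. The differences are in the bookkeeping. The paper defines a pairing on $\bdy Z$ (two endpoints are paired if they lie on the same component of the surgered $Z$, equivalently are joined by an $S_-$ arc of a boundary circle of $F(\PMC)$), picks any gluing of an \emph{unpaired} top/bottom pair on distinct intervals, and only invokes $\PMC_b$ when the process stalls at two intervals. You instead correct the parity of $|\mathbf{a}|$ with $\PMC_b$ at the outset — which guarantees that stalling never occurs — and then choose the gluing by a case analysis on $b_0(\bdy F(\PMC))$, merging two circles when there are several and splitting one circle carefully when there is only one. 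Your explicit gluings are exactly instances of the paper's ``unpaired'' gluings (endpoints on different boundary circles cannot be paired; endpoints separated along a single circle by an intermediate $S_+$-arc cannot be paired either), so the two arguments agree in substance. What your version buys is that it makes explicit (i) why the $\PMC_b$ correction is needed precisely when $|\mathbf{a}|\equiv 2\pmod 4$ and never again, and (ii) why the closed-up diagram really is a pointed matched circle — both of which the paper's terse proof leaves implicit.
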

\begin{proof}
  Write $\PMC=(Z,\mathbf{a},M)$.  There is a pairing of the endpoints
  of $Z$ by saying that $z$ is paired with $z'$ if, after doing
  surgery on $Z$ according to $(\mathbf{a},M)$ the points $z$ and $z'$
  are on the same connected component.  If $Z$ has more than two
  components then we can choose a top endpoint $z$ of an interval $I$
  and a bottom endpoint $z'$ of a different interval $I'$ so that $z$
  is not paired to $z'$ and glue $z$ to $z'$. If $Z$ has two
  components then either we can choose such a pair of points or we can
  do so after gluing on a copy of $\PMC_b$ as in
  Figure~\ref{fig:PMCb}. This gluing gives an arc diagram with
  one fewer intervals.
\end{proof}

\begin{lemma}\label{lem:disconnected}
  If $\HD=\HD_1\amalg\HD_2$ is a disconnected bordered-sutured Heegaard diagram then
  \[
    \BSD(\HD)\cong\BSD(\HD_1)\otimes_{\Ftwo}\BSD(\HD_2).
  \]  
\end{lemma}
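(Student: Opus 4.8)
The plan is to choose compatible (product) data on $\HD$ and check directly that, under the natural identifications of generators and algebras, the chain-level type $D$ structure $\BSD(\HD)$ literally splits as the tensor product, using that pseudoholomorphic curves in a disconnected Heegaard diagram are disjoint unions of pseudoholomorphic curves in the two pieces. Write $\HD_j=(\Sigma_j,\alphas_j,\betas_j,\z_j)$ and let $\PMC_j$ be the arc diagram parametrizing the bordered boundary of $\HD_j$, so that $\HD=(\Sigma_1\amalg\Sigma_2,\ \alphas_1\amalg\alphas_2,\ \betas_1\amalg\betas_2,\ \z_1\amalg\z_2)$ has bordered boundary parametrized by $\PMC\coloneqq\PMC_1\amalg\PMC_2$. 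The first ingredient is the purely algebraic fact, already used elsewhere in this paper, that $\Alg(\PMC)\cong\Alg(\PMC_1)\otimes_{\Ftwo}\Alg(\PMC_2)$ as \dg algebras, with $\Idem(\PMC)\cong\Idem(\PMC_1)\otimes_{\Ftwo}\Idem(\PMC_2)$.

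Next I would match generators. A generator of $\BSD(\HD)$ is a tuple of points occupying each $\beta$-circle, each $\alpha$-circle, and a complementary subset of the $\alpha$-arcs subject to the matching condition; since $\Sigma_1$ and $\Sigma_2$ are disjoint, any such tuple splits uniquely as $\x=\x_1\amalg\x_2$ with $\x_j$ a generator of $\BSD(\HD_j)$, and the idempotent of $\x$ is the tensor product of those of $\x_1$ and $\x_2$. This gives a bijection $\Gens(\HD)\cong\Gens(\HD_1)\times\Gens(\HD_2)$ compatible with idempotents, hence an isomorphism of the underlying $\Idem(\PMC)$-modules $\BSD(\HD)\cong\BSD(\HD_1)\otimes_{\Ftwo}\BSD(\HD_2)$.

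The main point is to match the structure maps. A domain in $\HD$ from $\x_1\amalg\x_2$ to $\y_1\amalg\y_2$ is a sum $B=B_1+B_2$ with $B_j\in\pi_2(\x_j,\y_j)$, and a holomorphic representative of $B$ in $(\Sigma_1\amalg\Sigma_2)\times[0,1]\times\RR$ is precisely a pair of holomorphic representatives of $B_1$ and $B_2$ sharing the same $\RR$-coordinate, since the translation action is diagonal. The Euler measure, the local multiplicities at $\x$ and $\y$, and the Reeb-chord data are all additive under disjoint-support addition of domains, so $\mathrm{ind}(B)=\mathrm{ind}(B_1)+\mathrm{ind}(B_2)$, and the moduli space relevant to $\delta^1$ --- the one left after quotienting by the single diagonal $\RR$ --- has expected dimension $\mathrm{ind}(B_1)+\mathrm{ind}(B_2)-1$. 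Since for a nontrivial domain the $\RR$-action on its moduli space is free, so that any nonempty such moduli space has dimension at least $1$, a rigid configuration contributing to $\delta^1$ must have one of $B_1,B_2$ trivial and the other of index $1$. Reading the algebra output of a curve in $\HD_j$ inside $\Alg(\PMC)$ by tensoring it with the idempotent of the untouched generator, this gives
\[
  \delta^1(\x_1\otimes\x_2)=\delta^1(\x_1)\otimes\x_2+\x_1\otimes\delta^1(\x_2),
\]
which is exactly the type $D$ structure map on $\BSD(\HD_1)\otimes_{\Ftwo}\BSD(\HD_2)$; as everything is over $\Ftwo$ there are no signs to track, so the isomorphism of $\Idem(\PMC)$-modules above upgrades to an isomorphism of type $D$ structures.

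The only real content is the dimension bookkeeping in the last paragraph --- verifying that a rigid holomorphic curve in the disconnected diagram cannot have both components nontrivial once one quotients by the shared $\RR$-translation. This is the same mechanism behind the K\"unneth formula for $\HFa$ and the behavior of $\CFDa$ under disjoint unions; transversality introduces nothing new, as one may choose product data and treat the two components independently, so in the end the lemma is essentially immediate from the definitions.
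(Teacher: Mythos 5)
Your argument is correct and fleshes out what the paper dispatches in a single sentence (``This is immediate from the definitions''): generators, domains, algebras, and moduli spaces all split along the two components, and the $\RR$-translation quotient forces rigid curves to have exactly one nontrivial factor, yielding the Leibniz-type formula for $\delta^1$. One wording quibble: a holomorphic curve in $(\Sigma_1\amalg\Sigma_2)\times[0,1]\times\RR$ is simply a pair of curves in the two pieces with no constraint linking their $\RR$-coordinates --- the moduli space is a genuine product $\cM^B\cong\cM^{B_1}\times\cM^{B_2}$, and only the single diagonal $\RR$-action is quotiented out --- so the phrase ``sharing the same $\RR$-coordinate'' could suggest a pairing constraint that does not exist; since your subsequent dimension count $\mathrm{ind}(B_1)+\mathrm{ind}(B_2)-1$ and the rigidity conclusion are nevertheless stated correctly, the argument is unaffected.
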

\begin{proof}
  This is immediate from the definitions.
\end{proof}

There is another operation which has essentially no effect on the bordered-sutured invariants. Suppose that $\HD$ is a bordered-sutured Heegaard diagram and $\eta$ is an arc in $\HD$ starting on the sutured boundary of $\HD$ and ending on the bordered boundary, and so that $\eta$ is disjoint from the $\alpha$- and $\beta$-curves. Cutting along $\eta$ gives a new bordered-sutured Heegaard diagram $\HD'$. We will say that $\HD'$ is obtained from $\HD$ by a \emph{safe cut}. The modules $\BSD(\HD)$ and $\BSD(\HD')$ are essentially the same, the subtlety being that they are over different algebras.
\begin{lemma}\label{lem:cut-HD-no-effect}
  Suppose that $\HD'$ is obtained from $\HD$ by a safe cut. Let $\PMC$
  (respectively $\PMC'$) be the arc diagram on the boundary of $\HD$
  (respectively $\HD'$). Then there is an inclusion
  $i\co \Alg(\PMC')\into\Alg(\PMC)$ (induced by the inclusion of
  $\PMC'$ into $\PMC$) and a projection
  $p\co \Alg(\PMC)\to\Alg(\PMC')$ (induced by setting any strand
  diagram not contained in $\PMC'$ to zero) so that
  \begin{align}
    \BSD(\HD)&\simeq i_*\BSD(\HD')\label{eq:iBSD}\\
    \BSD(\HD')&\simeq p_*\BSD(\HD)\label{eq:pBSD} \\
    \BSA(\HD)&\simeq p^*\BSA(\HD')\\
    \BSA(\HD')&\simeq i^*\BSA(\HD).
  \end{align}
  (Here, $i_*$ and $p_*$ are induction, or extension of scalars and
  $i^*$ and $p^*$ are restriction of scalars.)  
\end{lemma}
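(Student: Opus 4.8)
The plan is to mimic the proof of Lemma~\ref{lem:disk-decomp}: produce a bordered-sutured Heegaard diagram $\HD$ whose safe cut $\HD'$ makes the comparison transparent, observe that $\HD$ and $\HD'$ carry the same Floer-theoretic data, and then track how re-parametrizing the bordered boundary by the finer arc diagram $\PMC'$ changes the bookkeeping algebra from $\Alg(\PMC)$ to its subalgebra $\Alg(\PMC')$.

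First I would fix the cutting arc $\eta$ disjoint from $\alphas\cup\betas$ (possible by hypothesis) and let $w$ denote its endpoint on the bordered boundary. Since $\eta$ misses all of the $\alpha$- and $\beta$-curves, it lies in the interior of a single region of $\HD$; cutting along $\eta$ splits that region into two and leaves every other region untouched. Hence $\HD$ and $\HD'$ have the same generators, and, for corresponding choices of almost complex structure, the same moduli spaces of pseudo-holomorphic curves contributing to the structure maps — this is exactly the assertion made, for the case in which $\eta$ has both endpoints on the sutures, in the proof of Lemma~\ref{lem:disk-decomp}.

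Next I would compare the algebras. Cutting along $\eta$ replaces the bordered arc of $\PMC$ through whose interior $\eta$ exits by two arcs of $\PMC'$, carrying the same points of $\mathbf{a}$ and the same matching. Thus $\Idem(\PMC)\cong\Idem(\PMC')$ canonically, while $\Alg(\PMC')$ is precisely the subalgebra of $\Alg(\PMC)$ spanned by those strand diagrams none of whose strands straddles $w$: the map $i$ is the inclusion, $p$ is the projection killing strand diagrams that straddle $w$, and $p\circ i=\mathrm{id}$. The structure maps of $\BSD(\HD)$ and of $\BSD(\HD')$ are computed by the same curve counts, with the east (bordered) boundary of each contributing curve recorded as an element of $\Alg(\PMC)$, respectively $\Alg(\PMC')$. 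But in $\HD'$ any Reeb chord is a subarc of a single interval component of the bordered boundary, so no such chord straddles $w$; hence the two records of any given curve agree under $i$. This gives $\BSD(\HD)\simeq i_*\BSD(\HD')$, i.e.\ Equation~\eqref{eq:iBSD}, and composing with $p$ and using $p\circ i=\mathrm{id}$ gives Equation~\eqref{eq:pBSD}. The two type $A$ formulas follow in the same way, recording the sequences of Reeb chords that feed the $\Ainf$-operations of $\BSA$ and noting that these are again supported in $i(\Alg(\PMC'))\subseteq\Alg(\PMC)$; alternatively they follow from the type $D$ formulas via the duality of Equation~\eqref{eq:BSD-dual}, since the $\beta$-diagram $\HD^\beta$ (which represents the mirror) admits the same safe cut.

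The hard part will be the one geometric input underlying the first step: that cutting open the region containing $\eta$ neither creates nor destroys any curve relevant to the structure maps, so that $\HD$ and $\HD'$ really do have identical moduli spaces. Given the hypothesis that $\eta$ is disjoint from $\alphas\cup\betas$, I expect this to be immediate once the diagram is arranged appropriately — as it is for Lemma~\ref{lem:disk-decomp} — but it is the only point that requires genuine attention; the algebra bookkeeping in the remaining steps is routine.
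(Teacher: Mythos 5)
Your proposal takes essentially the same approach as the paper, and the algebraic bookkeeping is correct: $\Alg(\PMC')$ is the subalgebra of $\Alg(\PMC)$ spanned by strand diagrams whose support misses $w$, $p\circ i=\mathrm{id}$, the structure maps of $\BSD(\HD)$ only involve algebra elements in the image of $i$ because the corresponding Reeb chords avoid $w$, and the type $A$ statements can indeed be gotten either directly or from the $\beta$-diagram duality. One caution about the step you explicitly defer: the identification of moduli spaces does not follow merely from $\eta$ being disjoint from $\alphas\cup\betas$. The decisive ingredient is that $\eta$ also has an endpoint on the sutured boundary $\mathbf{z}$, so the region $R$ through which it runs is adjacent to $\mathbf{z}$ and therefore has multiplicity zero in every domain in $\pi_2(\x,\y)$; it is this, together with positivity (which upgrades ``zero multiplicity in $R$'' to ``the image of the curve avoids $\eta$''), that lets every holomorphic curve in $\HD$ lift to one in $\HD'$. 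The paper makes exactly this precise by introducing the gluing map $\iota\co\Sigma'\to\Sigma$, observing that $\iota$ gives bijections on generators and on $\pi_2$, and then that $u\mapsto\iota\circ u$ is a bijection on moduli spaces with surjectivity following from positivity of domains. Were $\eta$'s second endpoint on the bordered boundary instead, the region $R$ could carry nonzero multiplicity and the cut would genuinely change the invariant, so you should cite the sutured endpoint explicitly rather than appealing to the diagram being ``arranged appropriately.''
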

\begin{proof}
  Write $\HD=(\Sigma,\alphas,\betas,\z)$ and $\HD'=(\Sigma',\alphas',\betas',\z')$. There is a map $\iota\co \Sigma'\to\Sigma$ (which is $2$-to-$1$ on the new sutured part of the boundary of $\Sigma'$ and $1$-to-$1$ otherwise). Abusing notation, we will also let $\iota$ denote the induced map $\Sigma'\times[0,1]\times\RR\to \Sigma\times[0,1]\times\RR$.

  The map $\iota$ induces a bijection of generators, $\x'=\{x'_i\}\mapsto \iota(\x')=\{\iota(x'_i)\}$, and a bijection of domains $\iota_*\co \pi_2(\x',\y')\to \pi_2(\iota(\x'),\iota(\y'))$. Choose a sufficiently generic almost complex structure $J$ on $\Sigma\times[0,1]\times\RR$~\cite[Definition 5.7]{LOT1} and let $J'=\iota^*J$ be the induced almost complex structure on $\Sigma'\times[0,1]\times\RR$. Then for $B'\in\pi_2(\x',\y')$, a map $u\co S\to \Sigma'\times[0,1]\times\RR$ is a $J'$-holomorphic curve in the moduli space $\cM^{B'}(\x',\y')$ if and only if $\iota\circ u$ is a $J$-holomorphic curve in $\cM^{\iota_*(B')}(\iota(\x'),\iota(\y'))$. The assignment $u\mapsto \iota\circ u$ is clearly injective; from positivity of domains~\cite[Proof of Lemma 5.4]{LOT1}, it is also surjective.

  Thus, if $a\otimes \y$ is a term in $\delta^1(\x)$ for some
  generators $\x,\y\in\BSD(\HD)$ and strand diagram $a\in\Alg(\PMC)$
  then $a$ is the image of a strand diagram
  $a'\in\Alg(\PMC')$. Further, $a'\otimes \y'$ occurs in the
  differential of $\x'$, where $\iota(\x')=\x$ and
  $\iota(\y')=\y$. The results about $\BSD$ follow. The proofs for
  $\BSA$ are only notationally different.
\end{proof}

\begin{lemma}\label{lem:BSD-id-PMCb}
  The module $\BSD(\Id_{\PMC_b})$ associated to the identity cobordism
  of $\PMC_b$ has two generators and trivial differential.
\end{lemma}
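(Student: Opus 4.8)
The plan is to read $\BSD(\Id_{\PMC_b})$ directly off the standard bordered-sutured Heegaard diagram $\HD_b$ for the identity cobordism of $\PMC_b$ depicted in Figure~\ref{fig:PMCb}. First I would record the relevant structure of $\PMC_b$ itself: it has two intervals, each carrying a single point of $\mathbf{a}$, and its unique matching pairs these two points. Since a nonconstant strand would need two points of $\mathbf{a}$ on a single interval, $\Alg(\PMC_b)$ has no strand diagrams beyond the idempotents; that is, $\Alg(\PMC_b)=\Idem(\PMC_b)$, and because there is exactly one matched pair this ring is $\Ftwo\times\Ftwo$, with two primitive idempotents and zero differential. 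This also pins down what ``two generators'' should mean.

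Next I would identify the generators. The diagram $\HD_b$ has genus zero, no $\beta$-circles, and no $\alpha$-circles -- only the two $\alpha$-arcs coming from the two copies of the single matched pair of $\PMC_b$ -- so a generator of $\BSD(\HD_b)$ amounts to nothing more than a choice of idempotent for $\Alg(\PMC_b)\otimes_{\Ftwo}\Alg(-\PMC_b)$ of the ``diagonal'' form imposed by the identity cobordism. There are exactly two such, so $\BSD(\Id_{\PMC_b})$ has two generators, one for each primitive idempotent of $\Alg(\PMC_b)$.

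For the differential, recall that $\delta^1$ counts embedded holomorphic curves of Maslov index one whose domains are positive classes in $\pi_2(\x,\y)$. A direct inspection of $\HD_b$ shows there are no bigons, no nonconstant provincial periodic domains, and no positive index-one domain connecting the two generators; equivalently, by the first paragraph there are simply no non-idempotent algebra elements available to appear in the output of $\delta^1$. Hence $\delta^1=0$. As a consistency check one may instead note that $\BSDA(\Id_{\PMC_b})$ must act as the identity under $\DT$ by Theorem~\ref{thm:bordred-sut-pairing}, and that passing from $\BSDA(\Id_{\PMC_b})$ to $\BSD(\Id_{\PMC_b})$ only involves pairing with twisting slices, for which $\BSAA(\TW)\simeq\Alg(\PMC_b)$ carries no differential; together with the generator count this again forces a trivial differential.

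I expect no genuine mathematical difficulty here; the only point requiring care is bookkeeping with conventions -- confirming that the diagram pictured in Figure~\ref{fig:PMCb} is already the minimal one (so that no homotopy reduction is needed to reach two generators) and that the orientation and $S_\pm$ conventions are tracked so that the two generators sit in the expected diagonal idempotents of $\Alg(\PMC_b)\otimes_{\Ftwo}\Alg(-\PMC_b)$. Once the diagram is pinned down, both the generator count and the vanishing of the differential are immediate.
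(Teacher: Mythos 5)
Your overall approach is the same as the paper's (the paper just says this is ``immediate from the definitions,'' referring to Figure~\ref{fig:PMCb}), and your key algebraic observation is correct and does the real work: since each interval of $Z$ in $\PMC_b$ contains only one point of $\mathbf{a}$, there are no chords, hence $\Alg(\PMC_b)=\Idem(\PMC_b)\cong\Ftwo\times\Ftwo$ and likewise for $\Alg(-\PMC_b)$. Since the two generators lie in distinct complementary pairs of idempotents and the coefficient algebra $\Alg(\PMC_b)\otimes_{\Ftwo}\Alg(-\PMC_b)$ contains nothing but idempotents, there is simply no algebra element available that could appear as a coefficient of a $\delta^1$-arrow between them, so $\delta^1=0$.

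However, your description of the Heegaard diagram is not right, and in the form you state it it would not even give the correct generator count. You claim $\HD_b$ has no $\beta$-circles. If that were so, the unique generator would be the empty intersection, lying in the idempotent $I(\emptyset)\otimes I(\emptyset)$, which is not a complementary pair; you would get one generator, not two. In fact the standard identity diagram for $\PMC_b$ has exactly one $\beta$-circle, meeting the left $\alpha$-arc once and the right $\alpha$-arc once. The two generators are the two intersection points; the one on the left arc occupies the matched pair on the $-\PMC_b$ side and its complement on the $\PMC_b$ side, and vice versa for the other, so they sit in the two complementary idempotent pairs as required by Lemma~\ref{prop:DD-id-is}. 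Generators of $\BSD$ are always tuples of $\alpha$-$\beta$ intersection points, not abstract idempotent choices, so the phrase ``a generator amounts to nothing more than a choice of idempotent'' is a heuristic rather than a definition. Once the diagram is corrected, the rest of your argument -- no bigons, no positive provincial domains, and, decisively, no non-idempotent algebra elements to label a differential -- goes through and matches the paper's intent.
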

\begin{proof}
  Again, this is immediate from the definitions. (See also
  Figure~\ref{fig:PMCb}.)
\end{proof}

\subsubsection{The identity cobordism}\label{sec:identity}
Fix an arc diagram $\PMC=(Z,\mathbf{a},M)$. 
Given a set of matched pairs $\mathbf{s}$ in $\PMC$, let $\mathbf{s}^c$ denote the complementary set of matched pairs, but viewed as lying in $-\PMC$. We call the pair of idempotents $I(\mathbf{s})\otimes I(\mathbf{s}^c)\in \Alg(\PMC)\otimes\Alg(-\PMC)$ \emph{complementary}.

A \emph{chord} in $\PMC$ is an interval in $Z$ with endpoints in $\mathbf{a}$. Given a chord $\rho$ in $\PMC$ there is a corresponding algebra element $a(\rho)$, which is the sum of all strand diagrams in which $\rho$ is the only moving strand. There is also a corresponding chord $\rho'$ in $-\PMC$.

\begin{lemma}\label{prop:DD-id-is}
  For any arc diagram $\PMC$, the bordered-sutured bimodules $\BSDD(\Id_{\PMC})$ associated to the identity cobordism of $\PMC$ is generated by the set of pairs of complementary idempotents $(I\otimes J)\in\Alg(\PMC)\otimes\Alg(-\PMC)$, and has differential
  \[
    \delta^1(I\otimes J)=\sum_{I'\otimes J'}\sum_{\text{chords }\rho} (I\otimes J)(a(\rho)\otimes a(\rho'))\otimes (I'\otimes J'),
  \]
  where the first sum is over the pairs of complementary idempotents.
\end{lemma}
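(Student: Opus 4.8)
The plan is to reduce the computation to the case of a pointed matched circle, where $\BSDD(\Id_\PMC)$ is the familiar $\DDid$ bimodule, and then push the answer through the subdiagram functors of Lemma~\ref{lem:induct-or-restrict}.

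First, by Lemma~\ref{lem:arc-sub-pmc} there is a pointed matched circle $\overline{\PMC}$ for which $\PMC$, or else $\PMC\amalg\PMC_b$, is a \emph{full} subdiagram of $\overline{\PMC}$. A full subdiagram has its set of points, and hence its set of matched pairs, in bijection with those of the ambient diagram, so $\Idem(\PMC)\cong\Idem(\overline{\PMC})$ (respectively $\Idem(\PMC\amalg\PMC_b)\cong\Idem(\overline{\PMC})$) with the basic idempotents matching up; likewise every chord of $\PMC$ is a chord of $\overline{\PMC}$, the extra chords of $\overline{\PMC}$ being exactly those crossing one of the arcs along which the intervals of $Z$ are glued up, or the basepoint region. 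Using the standard identification of a type \DD\ structure over $\Alg(\PMC),\Alg(-\PMC)$ with a type $D$ structure over $\Alg(\PMC)\otimes_{\Ftwo}\Alg(-\PMC)$, I regard $\BSDD(\Id_\PMC)$ as $\BSD$ of the identity cobordism with bordered boundary $F(\PMC\amalg(-\PMC))$. Zarev's standard bordered-sutured Heegaard diagram for this cobordism is a full subdiagram of the one for $\Id_{\overline{\PMC}}$ — one simply re-labels the glued-up boundary intervals and basepoint region as sutured boundary — so Lemma~\ref{lem:induct-or-restrict} gives $\BSDD(\Id_\PMC)\cong p_*\BSDD(\Id_{\overline{\PMC}})$, where $p=p_+\otimes p_-\co\Alg(\overline{\PMC})\otimes\Alg(-\overline{\PMC})\to\Alg(\PMC)\otimes\Alg(-\PMC)$ is the product of the two projections (each killing any strand diagram not supported in the subdiagram). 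In the case that only $\PMC\amalg\PMC_b$ is a full subdiagram, Lemmas~\ref{lem:disconnected} and~\ref{lem:BSD-id-PMCb} let me strip off the tensor factor $\BSDD(\Id_{\PMC_b})$, which consists solely of its two complementary-idempotent generators with vanishing differential, recovering $\BSDD(\Id_\PMC)$.

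It then remains to evaluate $p_*$ on $\BSDD(\Id_{\overline{\PMC}})=\DDid$, which by \cite{LOT2} (equivalently \cite{Zarev09:BorSut}) is generated by the pairs of complementary idempotents of $\Alg(\overline{\PMC})\otimes\Alg(-\overline{\PMC})$ with $\delta^1(I\otimes J)=\sum_{I'\otimes J'}\sum_\rho (I\otimes J)(a(\rho)\otimes a(\rho'))\otimes(I'\otimes J')$, the inner sum over chords $\rho$ of $\overline{\PMC}$. Since $p$ is the identity on idempotents, all complementary idempotent pairs survive, giving the stated generating set. For the differential: each strand diagram completing $a(\rho)$ has the moving strand $\rho$, so if $\rho$ is not contained in $\PMC$ then this moving strand crosses a glued region and $p(a(\rho))=0$, whereas if $\rho$ is a chord of $\PMC$ the completing stationary strands all lie on $\mathbf{a}$ and survive, so $p(a(\rho))=a(\rho)\in\Alg(\PMC)$; the same holds for $\rho'$ in $-\PMC$. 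Hence $\delta^1$ on $p_*\DDid$ is exactly the asserted sum over chords of $\PMC$.

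The main obstacle is the geometric input behind the reduction: checking that the standard Heegaard diagram for the arc-diagram identity cobordism really is obtained from the pointed-matched-circle one by converting the appropriate piece of bordered boundary (including the basepoint region) into sutured boundary, so that Lemma~\ref{lem:induct-or-restrict} applies on the nose; once this is in hand the rest is the bookkeeping above. An essentially equivalent alternative, which some readers may prefer, is to skip the reduction and argue directly from the standard Heegaard diagram for $\Id_\PMC$: its generators are visibly the complementary idempotent pairs, and the index-one holomorphic curves contributing to $\delta^1$ are the same local bigon/rectangle families that appear in the pointed matched circle case (one per chord $\rho$, contributing $a(\rho)\otimes a(\rho')$), with the sutured boundary playing no role in the local analysis.
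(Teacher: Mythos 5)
Your proposal is correct and follows essentially the same route as the paper: reduce via Lemma~\ref{lem:arc-sub-pmc} to embedding $\PMC$ (or $\PMC\amalg\PMC_b$) as a full subdiagram of a pointed matched circle $\overline{\PMC}$, invoke Lemma~\ref{lem:induct-or-restrict} to get $p_*\BSDD(\Id_{\overline{\PMC}})$, handle the $\PMC_b$ factor with Lemmas~\ref{lem:disconnected} and~\ref{lem:BSD-id-PMCb}, import $\BSDD(\Id_{\overline{\PMC}})\cong\DDid(\overline{\PMC})$ from the bordered case, and check directly that $p_*$ of $\DDid(\overline{\PMC})$ yields the stated formula. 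The only cosmetic differences are the order of the two reductions (the paper strips off $\PMC_b$ before invoking Lemma~\ref{lem:arc-sub-pmc}, you do the reverse) and a citation detail (the identity \DD-bimodule computation is~\cite[Theorem 1]{LOT4}, not LOT2).
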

\begin{proof}
  We deduce this from the corresponding statement for pointed matched
  circles~\cite[Theorem 1]{LOT4}. Provisionally, let $\DDid(\PMC)$ be
  the type \DD\ bimodule described in the statement of the lemma, and
  let $\BSDD(\Id_{\PMC})$ be the type \DD\ bimodule associated to the
  standard Heegaard diagram for the identity cobordism of $\PMC$ (with
  respect to any sufficiently generic almost complex structure). We
  want to show that $\DDid(\PMC)\simeq \BSDD(\Id_{\PMC})$. Note that
  there is a unique isomorphism of underlying
  $\Idem(\PMC)\otimes\Idem(-\PMC)$-modules between $\DDid(\PMC)$ and
  $\BSDD(\Id_{\PMC})$; we will show that this identification
  intertwines the operations $\delta^1$.

  By definition, $\DDid(\PMC\amalg \PMC_b)\cong
  \DDid(\PMC)\otimes_{\Ftwo}\DDid(\PMC_b)$. Similarly, by
  Lemma~\ref{lem:disconnected}, $\BSDD(\Id_{\PMC\amalg\PMC_b})\cong
  \BSDD(\Id_{\PMC})\otimes_{\Ftwo}\BSDD(\Id_{\PMC_b})$.  
  If the unique isomorphism of underlying modules $\DDid(\PMC\amalg
  \PMC_b)\cong \BSDD(\Id_{\PMC\amalg\PMC_b})$ intertwines the
  operations $\delta^1$ then so does the unique isomorphism 
  $\DDid(\PMC)\cong \BSDD(\Id_{\PMC})$.
So, by Lemma~\ref{lem:arc-sub-pmc}, we may assume that $\PMC$ is a full subdiagram of a pointed matched circle $\overline{\PMC}$. By Lemma~\ref{lem:induct-or-restrict}, $\BSDD(\Id_{\PMC})\cong p_*\BSDD(\Id_{\overline{\PMC}})$. Further, by inspection, $\DDid(\PMC)\cong p_*\DDid(\overline{\PMC})$. Finally, from the bordered case~\cite[Theorem 1]{LOT4}, $\BSDD(\Id_{\overline{\PMC}})\cong \DDid(\overline{\PMC})$. The result follows.
\end{proof}

We note that we have also computed $\BSAA(\Id_{\PMC})$, by an analogue of a duality result in the bordered case~\cite[Theorem 5]{LOTHomPair}:
\begin{proposition}\label{prop:compute-AA-id}
  For any arc diagram $\PMC$ there is a homotopy equivalence of
  $\Ainf$-bimodules
  \begin{multline*}
    \BSAA(\Id_{\PMC})_{\Alg(\PMC),\Alg(-\PMC)}\\
    \simeq
    \Mor_{\Alg(\PMC)}\bigl((\Alg(-\PMC)\otimes_{\Ftwo}\Alg(\PMC))_{\Alg(-\PMC)\otimes\Alg(\PMC)}\DT\lsup{\Alg(-\PMC)\otimes\Alg(\PMC)}\BSDD(\Id_{\PMC}),\
    \Alg(\PMC)\bigr)
  \end{multline*}
\end{proposition}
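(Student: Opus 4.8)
The plan is to reduce, exactly as in the proof of Lemma~\ref{prop:DD-id-is}, to the case where $\PMC$ is a pointed matched circle; in that case the asserted equivalence is the bordered-sutured reformulation of \cite[Theorem 5]{LOTHomPair}. (Alternatively, the algebraic argument proving \cite[Theorem 5]{LOTHomPair} should apply verbatim to the algebra of any arc diagram once the shapes of $\BSDD(\Id_{\PMC})$ and $\BSAA(\Id_{\PMC})$ are pinned down, but it is cleanest to bootstrap from the closed case.) So the real content is to check that both sides of the claimed equivalence transform compatibly under the two operations used to pass from arc diagrams to pointed matched circles in Lemma~\ref{lem:arc-sub-pmc}: disjoint union with $\PMC_b$, and restriction/extension of scalars along a full subdiagram.

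First, disjoint unions. On the left, $\BSAA(\Id_{\PMC\amalg\PMC'})\cong\BSAA(\Id_{\PMC})\otimes_{\Ftwo}\BSAA(\Id_{\PMC'})$, the type~\AAm\ analogue of Lemma~\ref{lem:disconnected}. On the right, $\lsup{\Alg(-\PMC)\otimes\Alg(\PMC)}\BSDD(\Id_{\PMC})$ likewise splits as an external tensor product (Lemmas~\ref{prop:DD-id-is} and~\ref{lem:disconnected}), the algebra $\Alg(-\PMC)\otimes_{\Ftwo}\Alg(\PMC)$ and the induction bimodule split, and---since all algebras here are finite-dimensional over $\Ftwo$---the $\Mor$ complex of an external tensor product into an external tensor product is the external tensor product of the $\Mor$ complexes. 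Together with the case $\PMC=\PMC_b$, checked directly from Lemma~\ref{lem:BSD-id-PMCb} and its type~\AAm\ analogue, Lemma~\ref{lem:arc-sub-pmc} reduces us to the case that $\PMC$ is a full subdiagram of a pointed matched circle $\overline{\PMC}$, with inclusion $i\co\Alg(\PMC)\into\Alg(\overline{\PMC})$ and projection $p\co\Alg(\overline{\PMC})\to\Alg(\PMC)$ satisfying $p\circ i=\Id$. Here Lemma~\ref{lem:induct-or-restrict} (type~\AAm\ version, on both actions) gives $\BSAA(\Id_{\PMC})\cong i^*\BSAA(\Id_{\overline{\PMC}})$, and the proof of Lemma~\ref{prop:DD-id-is} gives $\BSDD(\Id_{\PMC})\cong p_*\BSDD(\Id_{\overline{\PMC}})$, where $p$ now denotes the projection of the relevant tensor-product algebra.

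The remaining point, which I expect to be the main obstacle, is a purely algebraic identity: for a full subdiagram $\PMC\subset\overline{\PMC}$ and any type \DD\ structure $P$ over $\Alg(\overline{\PMC})$, $\Alg(-\overline{\PMC})$, there should be a natural homotopy equivalence of $\Ainf$-bimodules
\[
  \Mor_{\Alg(\PMC)}\bigl((\Alg(-\PMC)\otimes\Alg(\PMC))\DT p_*P,\ \Alg(\PMC)\bigr)
  \ \simeq\ i^*\,\Mor_{\Alg(\overline{\PMC})}\bigl((\Alg(-\overline{\PMC})\otimes\Alg(\overline{\PMC}))\DT P,\ \Alg(\overline{\PMC})\bigr).
\]
Granting this, apply it with $P=\BSDD(\Id_{\overline{\PMC}})$, invoke \cite[Theorem 5]{LOTHomPair} for $\overline{\PMC}$, and identify $i^*$ of the right-hand side with $\BSAA(\Id_{\PMC})$ as above. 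The displayed identity should follow from standard adjunctions: $p\circ i=\Id$ exhibits $\Alg(\PMC)$ (respectively $\Alg(-\PMC)$) as a bimodule retract of $\Alg(\overline{\PMC})$ (respectively $\Alg(-\overline{\PMC})$), so that $\Hom$ into $\Alg(\PMC)$ is recovered, after restriction of scalars along $i$, as a direct summand of $\Hom$ into $\Alg(\overline{\PMC})$; combining this with the bar description of the $\Mor$ complex and the fact that $p_*$ is extension of scalars along $p$, one unwinds both sides to the same $\Ftwo$-complex. The subtlety to take care over is that the full $\Ainf$ (not merely dg) structures on the two $\Mor$ complexes are matched on the nose by these manipulations---which is exactly why the statement is phrased as a homotopy equivalence of $\Ainf$-bimodules rather than an isomorphism.
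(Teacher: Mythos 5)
The paper disposes of this proposition in one line: ``The proof is the same as in the bordered case \cite[Theorem 5]{LOTHomPair}.'' That is, the authors simply run the $\AZ$/twisting-slice argument of \cite{LOTHomPair} verbatim for the arc-diagram algebra, using that the needed ingredient $\BSAA(\TW_{F(-\PMC),-})\simeq \Alg(\PMC)$ (Formula~\eqref{eq:BSAA-of-TW}, from \cite[Proposition 3.12]{Zarev:JoinGlue}) is already established for arbitrary arc diagrams. Your proposal takes the opposite tack: you explicitly remark that the direct adaptation ``should apply verbatim'' but set it aside, and instead try to bootstrap from the pointed matched circle case via Lemma~\ref{lem:arc-sub-pmc}, mimicking the proof of Lemma~\ref{prop:DD-id-is}. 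So your approach is genuinely different from the paper's, and in fact the paper takes exactly the route you dismissed as less clean.

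The trouble is that the bootstrapping route is not merely inelegant here; it has a real gap that does not appear in the proof of Lemma~\ref{prop:DD-id-is}. That lemma is an identification of an explicit, small type \DD\ structure, and $p_*$ acts on it transparently. The present statement instead involves a $\Mor$ complex, which does not interact simply with restriction and induction along the non-unital inclusion $i\co\Alg(\PMC)\into\Alg(\overline\PMC)$ and the projection $p$. Your ``purely algebraic identity''
\[
  \Mor_{\Alg(\PMC)}\bigl((\Alg(-\PMC)\otimes\Alg(\PMC))\DT p_*P,\ \Alg(\PMC)\bigr)
  \ \simeq\ i^*\Mor_{\Alg(\overline{\PMC})}\bigl((\Alg(-\overline{\PMC})\otimes\Alg(\overline{\PMC}))\DT P,\ \Alg(\overline{\PMC})\bigr)
\]
is stated, flagged as ``the main obstacle,'' and then argued only at the level of ``standard adjunctions'' plus a bimodule-retract observation. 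But note the two sides are of very different sizes (the right side, before $i^*$, has bar tensors drawn from the strictly larger algebra $\Alg(\overline\PMC)$ and the strictly larger type $D$ structure $P$ rather than $p_*P$), $i^*$ does not shrink the underlying complex, and $p\circ i=\Id$ alone gives you a retract, not the homotopy equivalence you need, and certainly not as $\Ainf$-bimodules over the smaller algebra. Verifying this would require genuine work --- essentially re-deriving a version of the $\AZ$ argument --- at which point the detour through pointed matched circles has bought nothing. The disjoint-union step has a smaller analogous issue: for $\Ainf$-bimodules one gets a natural quasi-isomorphism, not an isomorphism, between $\Mor$ of external tensor products and the external tensor product of $\Mor$ complexes, and the bimodule structure must be tracked. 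These are fixable, but they are not free; the algebraic identity is the one I would not sign off on without a proof. I would recommend following the paper and adapting the $\AZ$/$\TW_{F(-\PMC),-}$ argument directly.
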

\begin{proof}
  The proof is the same as in the bordered case~\cite[Theorem 5]{LOTHomPair}.
\end{proof}

\begin{corollary}\label{cor:compute-DA}
  If $\mathcal{Y}$ is a bordered-sutured cobordism from $\PMC_1$ to $\PMC_2$ then
  \[
    \lsup{\Alg(\PMC_1)}\BSDA(\mathcal{Y})_{\Alg(\PMC_2)}\simeq \Mor^{\Alg(-\PMC_2)}\bigl(
    \Alg(\PMC_2)\DT\BSDD(\Id_{\PMC_2}),\lsup{\Alg(\PMC_1),\Alg(-\PMC_2)}\BSDD(\mathcal{Y})\bigr).
  \]
  (Here, $\Mor$ is the chain complex of type $D$ structure morphisms.)
\end{corollary}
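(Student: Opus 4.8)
The plan is to deduce this formally from Proposition~\ref{prop:compute-AA-id} and the bordered-sutured pairing theorem, by the same homological algebra that establishes the analogous bordered identity in~\cite{LOTHomPair}. The first step is to observe that $\mathcal{Y}\cup_{F(\PMC_2)}\Id_{\PMC_2}=\mathcal{Y}$, so Theorem~\ref{thm:bordred-sut-pairing}, applied so as to pair one type $A$ and one type $D$ action over $\Alg(-\PMC_2)$, gives
\[
  \lsup{\Alg(\PMC_1)}\BSDA(\mathcal{Y})_{\Alg(\PMC_2)}\simeq \lsup{\Alg(\PMC_1),\Alg(-\PMC_2)}\BSDD(\mathcal{Y})\DT \BSAA(\Id_{\PMC_2}),
\]
where the box tensor product pairs the type $D$ action of $\BSDD(\mathcal{Y})$ over $\Alg(-\PMC_2)$ with one of the two type $A$ actions of $\BSAA(\Id_{\PMC_2})$, leaving the type $D$ action over $\Alg(\PMC_1)$ and the remaining type $A$ action over $\Alg(\PMC_2)$ in place.

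Next I would insert the description of $\BSAA(\Id_{\PMC_2})$ supplied by Proposition~\ref{prop:compute-AA-id} as the morphism complex $\Mor_{\Alg(\PMC_2)}\bigl((\Alg(-\PMC_2)\otimes_{\Ftwo}\Alg(\PMC_2))\DT\BSDD(\Id_{\PMC_2}),\Alg(\PMC_2)\bigr)$, and then commute $\BSDD(\mathcal{Y})$ past the first argument of this $\Mor$. The tools for this are: the fact that morphism complexes of type $D$ structures are themselves iterated box tensor products (the identity $\Mor^{\Alg}(P,Q)=(P^*)^{\Alg}\DT\Alg\DT Q$ of~\cite[Proposition 2.7]{LOTHomPair} quoted above), the associativity of $\DT$, and the duality $\BSD(m(\cdot))^{\Alg}\cong(\BSD(\cdot))^*$ of Formula~\eqref{eq:BSD-dual}, which is what converts the $\Ainf$-module morphism complex $\Mor_{\Alg(\PMC_2)}$ appearing in Proposition~\ref{prop:compute-AA-id} into the type $D$ structure morphism complex $\Mor^{\Alg(-\PMC_2)}$ appearing in the statement. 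When $\BSDD(\mathcal{Y})$ is box tensored into the inner copy of $\BSDD(\Id_{\PMC_2})$ along $\Alg(-\PMC_2)$, Theorem~\ref{thm:bordred-sut-pairing} applies once more to give $\BSDD(\mathcal{Y}\cup_{F(\PMC_2)}\Id_{\PMC_2})=\BSDD(\mathcal{Y})$; this consumes the $\Alg(-\PMC_2)$ factor of the tensor algebra, leaving $\Alg(\PMC_2)\DT\BSDD(\Id_{\PMC_2})$ in the first slot, $\BSDD(\mathcal{Y})$ in the second, and the surviving $\Alg(\PMC_1)$-action as the type $D$ action on the resulting morphism complex. Regrouping yields exactly $\Mor^{\Alg(-\PMC_2)}\bigl(\Alg(\PMC_2)\DT\BSDD(\Id_{\PMC_2}),\lsup{\Alg(\PMC_1),\Alg(-\PMC_2)}\BSDD(\mathcal{Y})\bigr)$.

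I expect the only real obstacle to be the bookkeeping of the algebra actions: one must track the two actions carried by $\BSDD(\mathcal{Y})$ together with the several copies of $\Alg(\PMC_2)^{\pm}$ introduced by $\BSAA(\Id_{\PMC_2})$ and by rewriting it as a $\Mor$-complex built from $\BSDD(\Id_{\PMC_2})$, and verify at each regrouping that the correct type $A$/type $D$ pair is being contracted and that the $\Alg(\PMC_1)$-action emerges untouched on the target. No new geometric input is required beyond Proposition~\ref{prop:compute-AA-id} and Theorem~\ref{thm:bordred-sut-pairing}; the argument is a direct transcription of the bordered case.
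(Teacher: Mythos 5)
Your overall route — deduce the corollary from the pairing theorem (Theorem~\ref{thm:bordred-sut-pairing}) plus Proposition~\ref{prop:compute-AA-id} plus homological-algebra bookkeeping — is a reasonable alternative to the paper's, which is terser: the paper cites Proposition~\ref{prop:compute-AA-id} together with the fact that $\Alg(-\PMC_2)\DT\cdot$ is a quasi-equivalence of \dg categories from type~$D$ structures over $\Alg(-\PMC_2)$ to $\Ainf$-modules over $\Alg(-\PMC_2)$, and does not need to invoke the pairing theorem at all. The first two steps of your proposal are fine: $\BSDA(\mathcal{Y})\simeq\BSDD(\mathcal{Y})\DT\BSAA(\Id_{\PMC_2})$ is a correct application of Theorem~\ref{thm:bordred-sut-pairing}, and substituting the formula for $\BSAA(\Id_{\PMC_2})$ from Proposition~\ref{prop:compute-AA-id} is legitimate.

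The real issue is the third step, where you claim the conversion from the $\Ainf$-module morphism complex $\Mor_{\Alg(\PMC_2)}(-,\Alg(\PMC_2))$ of Proposition~\ref{prop:compute-AA-id} to the type~$D$ morphism complex $\Mor^{\Alg(-\PMC_2)}(-,-)$ of the statement is accomplished by Formula~\eqref{eq:BSD-dual}. That formula is a geometric statement, $\BSD(m(\mathcal{Y}))\cong\BSD(\mathcal{Y})^*$, comparing the invariant of the mirror to the vector-space dual; it says nothing about relating $\Ainf$-module morphism complexes to type~$D$ morphism complexes. The actual tool that does this conversion is exactly the quasi-equivalence fact cited in the paper: for type~$D$ structures $P,Q$ over $\Alg(-\PMC_2)$ one has $\Mor^{\Alg(-\PMC_2)}(P,Q)\simeq\Mor_{\Alg(-\PMC_2)}(\Alg(-\PMC_2)\DT P,\Alg(-\PMC_2)\DT Q)$, and this (not Formula~\eqref{eq:BSD-dual}) is what lets you pass between the two kinds of $\Mor$. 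Relatedly, your description of "box tensoring $\BSDD(\mathcal{Y})$ into the inner copy of $\BSDD(\Id_{\PMC_2})$" and then reading this as $\BSDD(\mathcal{Y}\cup_{F(\PMC_2)}\Id_{\PMC_2})$ skates over the fact that both those objects carry type~$D$ (not type~$A$) actions over $\Alg(-\PMC_2)$; the contraction has to pass through the intervening copy of $\Alg(-\PMC_2)$ (or through $\BSAA(\Id)$), and making this precise is where the quasi-equivalence is really being used. So the proposal is a sensible skeleton, but the ingredient you cite for the crucial step is the wrong one, and that step is precisely the content the paper's proof isolates.
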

\begin{proof}
  This is immediate from Proposition~\ref{prop:compute-AA-id} and the
  fact that $\Alg(-\PMC_2)\DT\cdot$ is a quasi-equivalence of \dg categories from the category of type $D$ structures to the category of $\Ainf$-modules.
\end{proof}

\subsubsection{Arcslides}\label{sec:arcslides}
Let $\phi\co \PMC\to\PMC'$ be an arcslide and $\HD_\phi$ the standard
Heegaard diagram for $\phi$~\cite[Figure 16]{LOT4} (see also Figures~\ref{fig:subdiag} and~\ref{fig:btau-HD}). Let
$\overline{\PMC}$ be the pointed matched circle from
Lemma~\ref{lem:arc-sub-pmc} and
$\overline{\phi}\co\overline{\PMC}\to\overline{\PMC}'$ the
corresponding arcslide.

\begin{proposition}\label{prop:bs-arcslide}
  The bimodule $\BSDD(\HD_\phi)$ is generated by the
  \emph{near-complementary pairs of idempotents}~\cite[Definition
  1.5]{LOT4} in $\Alg(\PMC)\otimes\Alg(\PMC')$ and the differential is
  given by
  \[
    \delta^1(I\otimes J)=\sum_{I'\otimes J'}(I\otimes J)A\otimes (I'\otimes J')
  \]
  where $A$ is the sum of all near-chords in
  $(\overline{\PMC},\overline{\PMC}')$~\cite[Definition 4.19]{LOT4}
  which are contained in $(\PMC,\PMC')$ if $\overline{\phi}$ is an
  underslide, and is the sum of all near-chords in
  $(\overline{\PMC},\overline{\PMC}')$ determined by any basic
  choice~\cite[Definition 4.32]{LOT4} which are contained in
  $(\PMC,\PMC')$ if $\overline{\phi}$ is an overslide.
\end{proposition}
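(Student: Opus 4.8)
The plan is to mirror, essentially verbatim, the argument used to prove Lemma~\ref{prop:DD-id-is}: introduce a provisional ``combinatorial'' bimodule, observe that it agrees with $\BSDD(\HD_\phi)$ as an underlying $\Idem(\PMC)\otimes\Idem(\PMC')$-module, and then reduce the claim that the two differentials agree to the already-known arcslide computation for pointed matched circles \cite{LOT4}. Write $\DDid_\phi(\PMC)$ for the type \DD\ bimodule described in the statement, with generators the near-complementary pairs of idempotents and differential given by the indicated sum of near-chords contained in $(\PMC,\PMC')$ (for an overslide, fix once and for all a basic choice for $\overline{\phi}$). Since both $\DDid_\phi(\PMC)$ and $\BSDD(\HD_\phi)$ have underlying module the span of the near-complementary pairs, there is a unique isomorphism of underlying modules between them, and it suffices to show it intertwines the operations $\delta^1$.

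The reduction proceeds in two steps, exactly as in Lemma~\ref{prop:DD-id-is}. First, both constructions are multiplicative under disjoint union: $\DDid_{\phi\amalg\Id_{\PMC_b}}(\PMC\amalg\PMC_b)\cong\DDid_\phi(\PMC)\otimes_{\Ftwo}\DDid_{\Id_{\PMC_b}}(\PMC_b)$ by definition, while $\BSDD(\HD_\phi\amalg\HD_{\Id_{\PMC_b}})\cong\BSDD(\HD_\phi)\otimes_{\Ftwo}\BSDD(\Id_{\PMC_b})$ by Lemma~\ref{lem:disconnected}; since $\BSDD(\Id_{\PMC_b})$ has trivial differential (Lemma~\ref{lem:BSD-id-PMCb}), the isomorphism intertwines $\delta^1$ for $\phi$ iff it does for $\phi\amalg\Id_{\PMC_b}$. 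By Lemma~\ref{lem:arc-sub-pmc} we may therefore assume $\PMC$ is a full subdiagram of a pointed matched circle $\overline{\PMC}$. Because the slid arc and the arc it is slid over both lie in $\PMC$, the arcslide $\phi$ extends to an arcslide $\overline{\phi}\co\overline{\PMC}\to\overline{\PMC}'$ of the same type (under- or overslide), with $\PMC'$ a full subdiagram of $\overline{\PMC}'$, and the standard Heegaard diagram $\HD_\phi$ is a full subdiagram of $\HD_{\overline{\phi}}$. Second, Lemma~\ref{lem:induct-or-restrict} gives $\BSDD(\HD_\phi)\cong p_*\BSDD(\HD_{\overline{\phi}})$ for the projection $p\co\Alg(\overline{\PMC})\to\Alg(\PMC)$; since a full subdiagram shares the same matched points, $p$ is the identity on idempotents, so $p_*$ preserves the underlying module and simply kills from $\delta^1$ any algebra element whose support leaves $\PMC$. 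By inspection this is precisely the passage from ``near-chords of $(\overline{\PMC},\overline{\PMC}')$'' to ``near-chords of $(\overline{\PMC},\overline{\PMC}')$ contained in $(\PMC,\PMC')$'', so $\DDid_\phi(\PMC)\cong p_*\DDid_{\overline{\phi}}(\overline{\PMC})$ as well. Combining these with the pointed-matched-circle arcslide computation $\BSDD(\HD_{\overline{\phi}})\cong\DDid_{\overline{\phi}}(\overline{\PMC})$ of \cite{LOT4} finishes the proof.

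The main obstacle is not a hard argument but the bookkeeping needed to license the ``by inspection'' steps: one must pin down the definition of the standard bordered-sutured Heegaard diagram $\HD_\phi$ for an arcslide between arc diagrams and check it really is the full subdiagram of $\HD_{\overline{\phi}}$ obtained by converting the relevant bordered boundary to sutured boundary (matching $\alpha$-arcs, $\alpha$- and $\beta$-circles, and the marked boundary), confirm that near-complementary pairs and near-chords of $(\PMC,\PMC')$ are exactly those of $(\overline{\PMC},\overline{\PMC}')$ supported in $(\PMC,\PMC')$, and, in the overslide case, verify that a basic choice for $\overline{\phi}$ restricts to an admissible choice for $\phi$ (or invoke the independence of the homotopy type on this choice recorded in \cite{LOT4}). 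Each of these is routine but is where the care goes.
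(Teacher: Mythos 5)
Your proposal is correct and takes essentially the same approach as the paper: reduce to the pointed-matched-circle case via the $\PMC_b$ trick (Lemmas~\ref{lem:disconnected}, \ref{lem:BSD-id-PMCb}, \ref{lem:arc-sub-pmc}), observe that $\HD_\phi$ is a full subdiagram of the standard diagram for $\overline{\phi}$, and conclude via Lemma~\ref{lem:induct-or-restrict} together with the arcslide computations of~\cite{LOT4}. Your writeup is somewhat more explicit about the provisional combinatorial bimodule and the bookkeeping, but the structure and key lemmas are identical to the paper's proof.
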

\begin{proof}
  The proof is similar to the proof of Proposition~\ref{prop:DD-id-is}.
  By Lemmas~\ref{lem:disconnected} and~\ref{lem:BSD-id-PMCb}, it
  suffices to consider the case that $\PMC$ (rather than
  $\PMC\amalg\PMC_b$) is a full subdiagram of a pointed matched circle.
  In this case, $\HD_\phi$ is a full subdiagram of the standard
  Heegaard diagram for $\overline{\phi}$.  So, this is immediate from
  Lemma~\ref{lem:induct-or-restrict} and the computation for pointed
  matched circles~\cite[Propositions 4.20 and 4.37]{LOT4}.
\end{proof}

\subsubsection{Interior 1- and 2-handle attachments}\label{sec:interior-handle}
In this section we will give the bimodule for a $2$-handle attachment;
the bimodule for a $1$-handle attachment is the same except for
exchanging which action corresponds to which boundary component.

Before giving the bimodule we need notation for the algebra $\Alg(T^2)$ associated to the torus (with two boundary sutures). This algebra is the path algebra with relations
\[
  \mathcenter{
  \begin{tikzpicture}
    \node at (0,0) (i0) {$\iota_0$};
    \node at (2,0) (i1) {$\iota_1$};
    \draw[->, bend left=45] (i0) to node[above]{$\rho_1$} (i1);
    \draw[->] (i1) to node[above]{$\rho_2$} (i0);
    \draw[->, bend right=45] (i0) to node[above]{$\rho_3$} (i1);
  \end{tikzpicture}}
  /(\rho_2\rho_1=\rho_3\rho_2=0).
\]
(Compare~\cite[Section 11.1]{LOT1}. We are following the convention that $\rho_1\rho_2$ means the arrow labeled $\rho_1$ followed by the arrow labeled $\rho_2$; this is the opposite of composition order. So, for instance, $\rho_1=\iota_0\rho_1\iota_1$.)

Let $\HB_\infty$ be the $\infty$-framed solid torus, with boundary the genus $1$
pointed matched circle. The type $D$ structure $\CFDa(\HB_\infty)$ over
$\Alg(T^2)$ has a single generator $r$, and differential
\[
  \delta^1(r)=(\rho_2\rho_3)\otimes r.
\]
\cite[Section 11.2]{LOT2}.
We can regard $\CFDa(\HB_\infty)$ as a type \DD\ bimodule over $\Alg(T^2)$ and $\Ftwo$.

Given an arc diagram $\PMC$, let $\Id_{\PMC}$ denote the identity cobordism of $F(\PMC)$, and $\BSDD(\Id_{\PMC})$ the corresponding type \DD\ bimodule.

Consider a (non-disconnecting) $2$-handle attachment as in Figure~\ref{fig:interiorHandle}(b) where the pointed matched circle on the right is $\PMC_R$ and the pointed matched circle on the left is $\PMC_L$. Then there is an inclusion map
\begin{equation}\label{eq:tens-T2}
  \Alg(T^2)\otimes_{\Ftwo}\Alg(\PMC_R)\into\Alg(\PMC_L).
\end{equation}
\begin{proposition}\label{prop:interior-handle}
  The type \DD\ bimodule for a $2$-handle attachment from $\Alg(\PMC_L)$ to $\Alg(\PMC_R)$ is the image of $\BSDD(\Id_{\PMC_R})\otimes_{\Ftwo}\CFDa(\HB_\infty)$ under extension of scalars with respect to the map~\eqref{eq:tens-T2}. 
\end{proposition}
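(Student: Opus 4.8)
The plan is to follow the same template as the proofs of Propositions~\ref{prop:DD-id-is} and~\ref{prop:bs-arcslide}: first verify the statement when $\PMC_R$ (and hence $\PMC_L$) comes from a pointed matched circle, where it is essentially known, and then bootstrap to arbitrary arc diagrams using the full-subdiagram machinery of Lemmas~\ref{lem:induct-or-restrict},~\ref{lem:arc-sub-pmc},~\ref{lem:disconnected}, and~\ref{lem:BSD-id-PMCb}.

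For the pointed matched circle case, I would simply observe that the standard Heegaard diagram of Figure~\ref{fig:interiorHandle}(b) is the diagram for a $2$-handle attachment treated in the bordered setting~\cite[Section 8.1]{LOT4}, where the associated type \DD\ bimodule is computed in precisely the asserted form, namely as the image of $\BSDD(\Id_{\PMC_R})\otimes_{\Ftwo}\CFDa(\HB_\infty)$ under extension of scalars along the inclusion~\eqref{eq:tens-T2}. (The $1$-handle bimodule is identical up to interchanging the two boundary actions, so only one of the two cases needs to be addressed.) So in this case there is nothing further to do.

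Next I would reduce the general case to this one. As in the proof of Proposition~\ref{prop:DD-id-is}, both sides of the asserted equivalence are multiplicative under disjoint union with $\Id_{\PMC_b}$: on the Heegaard-diagram side this is Lemma~\ref{lem:disconnected}, and on the algebraic side one combines $\BSDD(\Id_{\PMC_R\amalg\PMC_b})\cong\BSDD(\Id_{\PMC_R})\otimes_{\Ftwo}\BSDD(\Id_{\PMC_b})$ with the triviality of $\BSDD(\Id_{\PMC_b})$ from Lemma~\ref{lem:BSD-id-PMCb}. Hence it suffices to prove the statement after adjoining a copy of $\PMC_b$, so by Lemma~\ref{lem:arc-sub-pmc} I may assume $\PMC_R$ is a full subdiagram of a pointed matched circle $\overline{\PMC}_R$. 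Since the handle attachment does not disconnect a boundary component, the same completing arcs make $\PMC_L$ a full subdiagram of a pointed matched circle $\overline{\PMC}_L$, and the Heegaard diagram $\HD$ for the handle attachment over $\PMC_R$ is then a full subdiagram of the standard diagram $\overline{\HD}$ for the handle attachment over $\overline{\PMC}_R$. Viewing the type \DD\ bimodules as type $D$ structures over the tensor products of the corresponding algebras, Lemma~\ref{lem:induct-or-restrict} gives $\BSDD(\HD)\cong p_*\BSDD(\overline{\HD})$, where $p$ is the projection associated to the pair of full-subdiagram inclusions $\PMC_R\subset\overline{\PMC}_R$ and $\PMC_L\subset\overline{\PMC}_L$. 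Combining this with the pointed matched circle case and with the identity $\BSDD(\Id_{\PMC_R})\cong p_*\BSDD(\Id_{\overline{\PMC}_R})$ from the proof of Proposition~\ref{prop:DD-id-is}, the claim follows once I know that extension of scalars along~\eqref{eq:tens-T2} is compatible with these projections.

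That last compatibility is the only place where genuine work is required, and I expect it to be the main obstacle: I need to check that the full-subdiagram projection $\Alg(\overline{\PMC}_L)\to\Alg(\PMC_L)$ intertwines the inclusion $\Alg(T^2)\otimes_{\Ftwo}\Alg(\overline{\PMC}_R)\into\Alg(\overline{\PMC}_L)$ with $\Alg(T^2)\otimes_{\Ftwo}\Alg(\PMC_R)\into\Alg(\PMC_L)$, so that applying $p_*$ to the image of $\BSDD(\Id_{\overline{\PMC}_R})\otimes_{\Ftwo}\CFDa(\HB_\infty)$ returns the image of $\BSDD(\Id_{\PMC_R})\otimes_{\Ftwo}\CFDa(\HB_\infty)$. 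This is a direct, if somewhat fiddly, inspection of how strand diagrams supported in the genus-one handle region sit inside the two larger arc diagrams, entirely analogous to the ``by inspection'' steps in the proofs of Propositions~\ref{prop:DD-id-is} and~\ref{prop:bs-arcslide}; I do not expect any surprises.
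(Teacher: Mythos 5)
Your proposal takes a genuinely different route from the paper's. The paper's proof is a one-liner: it invokes Lemma~\ref{lem:cut-HD-no-effect} and Lemma~\ref{lem:disconnected}. Concretely, the Heegaard diagram in Figure~\ref{fig:interiorHandle}(b) admits a safe cut separating the genus-one handle region from the rest of the diagram; cutting disconnects the diagram into the standard diagram for $\Id_{\PMC_R}$ and the genus-one diagram computing $\CFDa(\HB_\infty)$, and Equation~\eqref{eq:iBSD} of Lemma~\ref{lem:cut-HD-no-effect} then says $\BSDD$ of the original diagram is obtained from the tensor product by induction along the inclusion $i$, which is exactly the map~\eqref{eq:tens-T2}. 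This works uniformly for all arc diagrams, with no special role for pointed matched circles. You instead try to reproduce the template of Propositions~\ref{prop:DD-id-is} and~\ref{prop:bs-arcslide} --- reduce to the pointed matched circle case and bootstrap via full subdiagrams. That template is used in the paper only for statements where the punchline is a \emph{comparison with a previously known explicit formula} in the pointed matched circle setting; here the point of the proposition is precisely the \emph{form} of the answer (a tensor product induced along~\eqref{eq:tens-T2}), and the safe-cut lemma gives that form directly.

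Beyond the inefficiency, your proposal has two real gaps. First, the base case: you assert that $\cite[\text{Section 8.1}]{LOT4}$ computes the bimodule ``in precisely the asserted form,'' but the paper's own phrasing is only that the bimodule ``was already computed'' there; whether that computation is packaged as $i_*\bigl(\BSDD(\Id_{\PMC_R})\otimes_{\Ftwo}\CFDa(\HB_\infty)\bigr)$ is exactly what would need to be checked, and is not automatic. Second, and more substantively, the compatibility between extension of scalars along~\eqref{eq:tens-T2} and the full-subdiagram projections $p_L$, $p_R$ --- the claim that $p_{L,*}\circ (i_{\overline{L}})_*$ and $(i_L)_*\circ(\Id\otimes p_R)_*$ agree as functors on type $D$ structures --- is precisely the content you defer as ``fiddly inspection.'' This is the crux of your reduction and it is not a formal triviality: $p_L$ kills strand diagrams of $\Alg(\overline{\PMC}_L)$ not supported in $\PMC_L$, and one has to see that this interacts correctly with the image of $\Alg(T^2)\otimes\Alg(\overline{\PMC}_R)$. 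Until that is written out, the proposal is an outline rather than a proof. Had you reached for Lemma~\ref{lem:cut-HD-no-effect} --- which the authors set up two subsections earlier expressly for computations like this one --- both gaps would have vanished.
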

\begin{proof}
  This is immediate from Lemmas~\ref{lem:cut-HD-no-effect} and~\ref{lem:disconnected}.
\end{proof}

\subsubsection{Attaching handles to $R_\pm$}\label{sec:R-handles}
We describe the bimodules for attaching a $1$-handle to $R_-$ or $R_+$.

Let $\PMC'=(Z',\mathbf{a}',M')$ be an arc diagram, and let $z_1,z_2$ be terminal endpoints of two different components of $Z'$. Let $\PMC=(Z',\mathbf{a},M)$ where $\mathbf{a}=\mathbf{a}'\cup\{b,c\}$ and $b$ is slightly below $z_1$ and $c$ is slightly below $z_2$; and $M$ agrees with $M'$ on $\mathbf{a}'$ and pairs $b$ and $c$. That is, $-\PMC$ (respectively $\PMC'$) is as on the left (respectively right) of the Heegaard diagram in Figure~\ref{fig:RminusOneHandle}.
Given a strand diagram $a\in \Alg(\PMC')$, composing $a$ with the inclusion map $i\co \PMC'\into \PMC$ gives a strand diagram $i(a)\in\Alg(\PMC)$, and this induces an algebra homomorphism $\inobc\co \Alg(\PMC')\into\Alg(\PMC)$. There is an induced homomorphism
\[
  (\inobc\otimes\Id)\co \Alg(\PMC')\otimes\Alg(-\PMC')\into \Alg(\PMC)\otimes\Alg(-\PMC').
\]
\begin{proposition}\label{prop:R-minus-one-handle}
  If $\mathcal{Y}$ is the bordered-sutured cobordism from $\Alg(\PMC)$ to $\Alg(\PMC')$ associated to a $1$-handle attachment to $R_-$ (Figure~\ref{fig:RminusOneHandle}) then there is a homotopy equivalence
  \[
    \lsup{\Alg(\PMC),\Alg(-\PMC')}\BSDD(\mathcal{Y})\simeq (\inobc\otimes\Id)_*\lsup{\Alg(\PMC'),\Alg(-\PMC')}\BSDD(\Id_{\PMC'}).
  \]
\end{proposition}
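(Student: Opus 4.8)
The plan is to follow the template already used for Propositions~\ref{prop:DD-id-is}, \ref{prop:bs-arcslide}, and~\ref{prop:interior-handle}: exhibit the standard bordered-sutured Heegaard diagram $\HD$ for $\mathcal{Y}$ (the left of Figure~\ref{fig:RminusOneHandle}), recognize it as a mild decoration of the standard Heegaard diagram $\HD'$ for $\Id_{\PMC'}$, and check that the passage from $\HD'$ to $\HD$ implements extension of scalars along $\inobc\otimes\Id$.

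First I would reduce to a convenient normal form. Using Lemma~\ref{lem:disconnected} and Lemma~\ref{lem:BSD-id-PMCb} to absorb, if necessary, a copy of $\Id_{\PMC_b}$, together with Lemma~\ref{lem:arc-sub-pmc}, I may assume $\PMC'$ is a full subdiagram of a pointed matched circle $\overline{\PMC'}$; adjoining the matched pair $\{b,c\}$ just below the relevant terminal endpoints then realizes $\PMC$ as a full subdiagram of a pointed matched circle $\overline{\PMC}$ and $\mathcal{Y}$ as a full subdiagram of the corresponding pointed-matched-circle $R_-$ $1$-handle cobordism. By Lemma~\ref{lem:induct-or-restrict} it then suffices to treat that pointed-matched-circle case, where the invariant is essentially the computation of~\cite[Section 8.1]{LOT4}; the content of the proposition is the compatibility of this known computation with the subdiagram projection and with the inclusion $\inobc$.

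Next I would compare $\HD$ and $\HD'$ directly. The diagram $\HD$ is obtained from $\HD'$ by inserting, near the bordered boundary parametrized by $\PMC$, the two new points $b,c\in\mathbf{a}$ together with a single new $\alpha$-arc $\alpha_{bc}$ joining them which is disjoint from every $\beta$-circle, every $\alpha$-circle, every other $\alpha$-arc, and the sutures; this is precisely the Heegaard-diagram manifestation of attaching the handle $H$ to $R_-$. Because $\alpha_{bc}$ meets no $\beta$-curve, every generator of $\HD$ leaves $\alpha_{bc}$ unoccupied, so its left idempotent omits $\{b,c\}$; this gives a bijection of generators of $\HD$ with those of $\HD'$ under which the left idempotent is carried through $\inobc$ while the right $\Alg(-\PMC')$-idempotent is unchanged. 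By positivity of domains, exactly as in the proof of Lemma~\ref{lem:cut-HD-no-effect}, no domain admitting a holomorphic representative can cross the region newly cut off by $\alpha_{bc}$, so for matching almost complex structures the moduli spaces, and hence the operations $\delta^1$, of $\HD$ and $\HD'$ coincide, with each left algebra input reinterpreted via $\inobc$. This is exactly the assertion $\lsup{\Alg(\PMC),\Alg(-\PMC')}\BSDD(\HD)\cong(\inobc\otimes\Id)_*\lsup{\Alg(\PMC'),\Alg(-\PMC')}\BSDD(\HD')$, and combining this with the invariance of $\BSDD$ under change of Heegaard diagram proves the proposition.

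The main obstacle I anticipate is geometric rather than formal: correctly reading the Heegaard diagram for the $R_-$ $1$-handle attachment off Figure~\ref{fig:RminusOneHandle} and confirming that $\alpha_{bc}$ genuinely bounds no provincial region feeding into the differential, i.e., that the region it cuts off lies in $\mathbf{z}^c$ (the deleted part of the boundary) rather than contributing a nontrivial domain. Once the diagram is correctly drawn, every remaining step is routine bookkeeping with the induction/restriction formalism (Lemmas~\ref{lem:induct-or-restrict}, \ref{lem:disconnected}, \ref{lem:cut-HD-no-effect}) together with the positivity-of-domains argument above.
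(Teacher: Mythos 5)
Your second paragraph is essentially the paper's proof, just with Lemma~\ref{lem:cut-HD-no-effect} re-derived in this special case rather than cited: the authors say ``This is immediate from Lemma~\ref{lem:cut-HD-no-effect} and the observation that there are no holomorphic curves with image in the non-identity part of the Heegaard diagram from Figure~\ref{fig:RminusOneHandle},'' which is precisely your observation that $\alpha_{bc}$ meets no $\beta$-curve, that all generators therefore leave the new matched pair unoccupied (giving the idempotent bijection through $\inobc$), and that positivity of domains forbids any curve from entering the region near $\alpha_{bc}$. That part is correct and is the argument the paper intends.

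Your first paragraph, however, does not work and is unnecessary. The normal-form reduction via Lemma~\ref{lem:arc-sub-pmc} and Lemma~\ref{lem:induct-or-restrict} is the right template for Propositions~\ref{prop:DD-id-is} and~\ref{prop:bs-arcslide}, where the pointed matched circle case is literally the bordered case computed in~\cite{LOT4}. But there is no ``pointed-matched-circle $R_-$ $1$-handle cobordism'' to reduce to: for a pointed matched circle $\overline{\PMC}$, the surface $F(\overline{\PMC})$ is closed, so a cobordism topologically of the form $[0,1]\times F(\overline{\PMC})$ has empty sutured boundary, hence no $R_-$ on which to attach a handle. The very thing that makes this proposition bordered-sutured-specific is that $R_-$ is nontrivial, which never happens over a pointed matched circle. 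Relatedly, the appeal to~\cite[Section 8.1]{LOT4} is a misattribution: that reference computes bimodules for \emph{interior} $1$-handle attachments (the topic of Proposition~\ref{prop:interior-handle}), not for handle attachments to $R_\pm$. If you simply delete the first paragraph and keep the direct Heegaard-diagram comparison (or, more efficiently, cite Lemma~\ref{lem:cut-HD-no-effect} together with the vanishing of holomorphic curves in the new region), you have the paper's proof.
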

\begin{proof}
  This is immediate from Lemmas~\ref{lem:disconnected} and~\ref{lem:cut-HD-no-effect} and the observation that there are no holomorphic curves with image in the non-identity part of the Heegaard diagram from Figure~\ref{fig:RminusOneHandle}.
\end{proof}

Next, let $\PMC=(Z,\mathbf{a},M)$ be an arc diagram. Suppose that $b$ and $c$ are points in $\mathbf{a}$ which are adjacent to terminal endpoints of arcs in $Z$ and so that $b$ is matched with $c$. Let $\PMC'=(Z',\mathbf{a}',M')$ be the result of performing surgery on the pair $(b,c)$, with $\mathbf{a}'=\mathbf{a}\setminus\{b,c\}$.
So, $-\PMC$ (respectively $\PMC'$) is the arc diagram on the left (respectively right) of the Heegaard diagram in Figure~\ref{fig:RplusOneHandle}.

There is an inclusion map $\ibc\co \Alg(\PMC')\into \Alg(\PMC)$, defined as follows. Recall that a strand diagram in $\PMC$ consists of subsets $S$, $T$ of the set of matched pairs in $\mathbf{a}$ and a collection of arcs $\rho$ with initial endpoints in $S$ and terminal endpoints in $T$, satisfying some conditions. 
Given a strand diagram $b=(S,T,\rho)$ in $\PMC'$, let 
\[
	\ibc(b)=(S\cup\{b,c\},T\cup\{b,c\},\rho).
\]
There is an induced homomorphism
\[
  (\ibc\otimes\Id)\co \Alg(\PMC')\otimes\Alg(-\PMC')\into \Alg(\PMC)\otimes\Alg(-\PMC').
\]

\begin{proposition}\label{prop:R-plus-one-handle}
  If $\mathcal{Y}$ is the bordered-sutured cobordism from $\Alg(\PMC)$ to $\Alg(\PMC')$ associated to a $1$-handle attachment to $R_+$ (Figure~\ref{fig:RplusOneHandle}) then there is a homotopy equivalence
  \[
    \lsup{\Alg(\PMC),\Alg(-\PMC')}\BSDD(\mathcal{Y})\simeq (\ibc\otimes\Id)_*\lsup{\Alg(\PMC'),\Alg(-\PMC')}\BSDD(\Id_{\PMC'}).
  \]
\end{proposition}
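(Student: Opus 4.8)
The plan is to follow the proof of Proposition~\ref{prop:R-minus-one-handle} almost verbatim, keeping careful track of the one place where the positive and negative handle attachments differ. Let $\HD_{\mathcal{Y}}$ denote the standard bordered-sutured Heegaard diagram for the $1$-handle attachment to $R_+$ shown in Figure~\ref{fig:RplusOneHandle}. Away from the local region drawn there, $\HD_{\mathcal{Y}}$ is exactly the standard Heegaard diagram for $\Id_{\PMC'}$; the only change in that region is the presence of an extra $\alpha$-arc joining the two new points $b,c$, which sit just inside the terminal endpoints of two components of $Z$ and are matched to one another.

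First I would record the curve-count observation, exactly as in the proof of Proposition~\ref{prop:R-minus-one-handle}: there is no nonconstant positive domain with support entirely in the non-identity part of $\HD_{\mathcal{Y}}$, since that part of the diagram is a single trivial polygon adjacent to the bordered boundary admitting no such domain. Hence the operation $\delta^1$ on $\BSDD(\HD_{\mathcal{Y}})$ receives no contribution from the new region, and every contribution factors through the identity part of the diagram. Combining this with a safe cut (Lemma~\ref{lem:cut-HD-no-effect}) isolating the new region --- or, equivalently, directly comparing $\HD_{\mathcal{Y}}$ with the standard diagram for $\Id_{\PMC'}$ and invoking Lemma~\ref{prop:DD-id-is} --- identifies $\lsup{\Alg(\PMC),\Alg(-\PMC')}\BSDD(\mathcal{Y})$ with the extension of scalars of $\lsup{\Alg(\PMC'),\Alg(-\PMC')}\BSDD(\Id_{\PMC'})$ along the algebra map induced by the local modification, which is the identity on the second tensor factor $\Alg(-\PMC')$.

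It then remains only to check that the induced map on the first factor is $\ibc$. Because the new $\alpha$-arc lies on the $R_+$ side of the sutures in the local region of Figure~\ref{fig:RplusOneHandle}, every generator of $\BSDD(\HD_{\mathcal{Y}})$ must run along it, so in the strand algebra the new matched pair $\{b,c\}$ appears as a horizontal strand in the idempotent of each generator; that is, a strand diagram $(S,T,\rho)$ of $\Alg(\PMC')$ is carried to $(S\cup\{b,c\},T\cup\{b,c\},\rho)$, which is precisely $\ibc$. Tensoring with the identity on $\Alg(-\PMC')$ gives $(\ibc\otimes\Id)_*$, as claimed.

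The main obstacle is exactly this last step: confirming that the $R_+$ attachment produces $\ibc$ --- with the new pair forced into the idempotent --- rather than $\inobc$, the plain inclusion that governs the $R_-$ case in Proposition~\ref{prop:R-minus-one-handle}. This is where the geometric distinction between attaching the handle to $R_+$ versus $R_-$ enters the algebra, and it requires tracing the generators of $\BSDD(\HD_{\mathcal{Y}})$ through the local picture rather than merely quoting Lemma~\ref{lem:cut-HD-no-effect} as stated. Everything else is a routine local adaptation of the $R_-$ computation; in particular, since $\BSDD(\Id_{\PMC'})$ is already known for all arc diagrams by Lemma~\ref{prop:DD-id-is}, no reduction to the pointed-matched-circle case is needed here.
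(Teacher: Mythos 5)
Your proof has the same skeleton as the paper's (which is one sentence: cite Lemma~\ref{lem:cut-HD-no-effect} together with the observation that no holomorphic curves have image in the non-identity part of the diagram). You correctly identify the point where the $R_+$ case differs from the $R_-$ case: the induction has to be along $\ibc$, which forces the matched pair $\{b,c\}$ into the idempotent, rather than along the plain inclusion $\inobc$. The paper's proof is terse enough that it simply does not address this, so your flagging it is genuinely useful.

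That said, the justification you give for why the map is $\ibc$ is imprecise and, as stated, does not quite make sense. You write that ``the new $\alpha$-arc lies on the $R_+$ side of the sutures,'' but $\alpha$-arcs have their endpoints on the bordered boundary, not the sutured boundary, so the dichotomy ``on the $R_+$ side'' versus ``on the $R_-$ side'' is not applicable to an $\alpha$-arc; nor is ``which side of the sutures'' a property of the arc that can be read off locally. The actual mechanism forcing the occupancy of the new $\alpha$-arc has to be read off from the local Heegaard diagram of Figure~\ref{fig:RplusOneHandle}: the local picture contains a feature (in effect, a $\beta$-circle whose only intersections with $\alpha$-curves lie on the new arc) such that every generator of $\BSDD(\HD_{\mathcal{Y}})$ is forced to occupy the new $\alpha$-arc, whereas in Figure~\ref{fig:RminusOneHandle} the new $\alpha$-arc meets no $\beta$-circle and so is never occupied. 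This combinatorial difference in the two diagrams is exactly what distinguishes $\ibc$ from $\inobc$; ``being on the $R_+$ side'' is a consequence, not a cause. Fixing that sentence to refer to the $\beta$-circle in the local diagram would make your argument watertight, and it would then be a slightly more careful version of the paper's own proof rather than a different route.
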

\begin{proof}
  Again, this is immediate from Lemma~\ref{lem:cut-HD-no-effect} and the observation that there are no holomorphic curves with image in the non-identity part of the Heegaard diagram from Figure~\ref{fig:RplusOneHandle}.
\end{proof}

\subsubsection{Cupping and capping sutures}\label{sec:cup-cap-sutures}
We describe the type \DD\ bimodules associated to introducing an $R_+$-cup (Figure~\ref{fig:cupInRminus}) and an $R_-$-cup (Figure~\ref{fig:cupInRplus}). The bimodules associated to capping sutures are the same as these, except with the actions reversed. These bimodules are also essentially the same as the bimodules from Section~\ref{sec:R-handles}.

Given an arc diagram $\PMC=(Z,\mathbf{a},M)$ and a terminal endpoint $z$ of a component of $Z$, let $\PMC'=(Z',\mathbf{a}',M')$ be the arc diagram where:
\begin{itemize}
\item $Z'=Z\amalg Z_0$, where $Z_0$ is a single interval.
\item $\mathbf{a'}=\mathbf{a}\cup \{b,c\}$ where $b$ is a point in $Z$ adjacent to $z$ and $c$ is a point in the interior of $Z_0$.
\item $M'$ agrees with $M$ on $\mathbf{a}$ and matches $b$ with $c$.
\end{itemize}
There is an inclusion map $\inobc\co \Alg(\PMC)\into \Alg(\PMC')$, which
simply views a strand diagram in $\PMC$ as lying in $\PMC'$. 
There is also an inclusion map $\ibc\co \Alg(\PMC)\into\Alg(\PMC')$ which sends a strand diagram $(S,T,\rho)$ to $(S\cup\{b,c\},(T\cup\{b,c\},\rho)$, i.e., places a pair of horizontal strands at $b$ and $c$ and otherwise leaves the strand diagram unchanged.

\begin{proposition}\label{prop:cup-in-R}
  Let $\mathcal{Y}_{\cupplus}$ (respectively $\mathcal{Y}_{\cupminus}$) be the bordered-sutured cobordism from $\PMC$ to $\PMC'$ which creates an $R_+$-cup (respectively $R_-$-cup) as in Figure~\ref{fig:cupInRminus} (respectively Figure~\ref{fig:cupInRplus}). Then there are homotopy equivalences
  \begin{align}
    \lsup{\Alg(\PMC),\Alg(-\PMC')}\BSDD(\mathcal{Y}_{\cupplus}) &\simeq(\Id\otimes \inobc)_*\lsup{\Alg(\PMC),\Alg(-\PMC)}\BSDD(\Id_{\PMC})\\
    \lsup{\Alg(\PMC),\Alg(-\PMC')}\BSDD(\mathcal{Y}_{\cupminus}) &\simeq (\Id\otimes\ibc)_*\lsup{\Alg(\PMC),\Alg(-\PMC)}\BSDD(\Id_{\PMC}).
  \end{align}
\end{proposition}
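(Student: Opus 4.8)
The plan is to follow the template used for Propositions~\ref{prop:R-minus-one-handle} and~\ref{prop:R-plus-one-handle}: exhibit a bordered-sutured Heegaard diagram for each cobordism, recognize it as the standard diagram for $\Id_\PMC$ outside a small region, and apply Lemma~\ref{lem:cut-HD-no-effect}. Since $\BSDD$ is a type $D$ structure over the tensor product algebra $\Alg(\PMC)\otimes\Alg(-\PMC')$, that lemma applies directly.

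Concretely, I would take the standard Heegaard diagrams $\HD_{\cupplus}$ and $\HD_{\cupminus}$ of Figures~\ref{fig:cupInRminus} and~\ref{fig:cupInRplus}. In each case there is an arc $\eta$ running from the suture bordering the new interval $Z_0$ out to the bordered boundary, disjoint from all $\alpha$- and $\beta$-curves, so that a safe cut along $\eta$ turns $\HD_{\cupplus}$ (resp.\ $\HD_{\cupminus}$) into the standard Heegaard diagram for $\Id_\PMC$ --- together, if necessary, with a trivial disk component that contributes nothing by Lemma~\ref{lem:disconnected}. By Lemma~\ref{lem:cut-HD-no-effect}, $\BSDD(\mathcal{Y}_{\cupplus})$ is then the image of $\BSDD(\Id_\PMC)$ under extension of scalars along the inclusion of arc diagrams induced by $\eta$. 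Reading off the figures, this inclusion is the identity on the incoming ($\Alg(\PMC)$) factor, while on the outgoing ($\Alg(-\PMC')$) factor it is exactly the map $\inobc$ for the $R_+$-cup --- the new matched pair $(b,c)$ being simply adjoined --- and exactly $\ibc$ for the $R_-$-cup, where the configuration in Figure~\ref{fig:cupInRplus} forces a pair of horizontal strands at $b$ and $c$. This yields the two displayed homotopy equivalences; the statements for caps follow by interchanging the two type $D$ actions, exactly as in Section~\ref{sec:R-handles}.

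The one step requiring genuine verification is that no holomorphic curve has image meeting the non-identity (cup) part of $\HD_{\cupplus}$ or $\HD_{\cupminus}$, so that the structure maps of $\BSDD(\mathcal{Y}_{\cupplus})$ coincide on the nose with those of $\BSDD(\Id_\PMC)$ transported along the algebra map. This is where I expect the only real difficulty, and it is handled exactly as in Propositions~\ref{prop:R-minus-one-handle} and~\ref{prop:R-plus-one-handle}: the cup region admits no positive domain joining two distinct generators. The remaining content is the combinatorial identification of the safe-cut inclusion with $\inobc$ and $\ibc$, which is immediate from the definitions of those maps recalled just above the statement.
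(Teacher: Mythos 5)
Your proposal takes essentially the same approach as the paper, which gives a one-sentence proof: ``This is the same as the proofs of Propositions~\ref{prop:R-minus-one-handle} and~\ref{prop:R-plus-one-handle}, and is left to the reader.'' Those proofs invoke exactly the two ingredients you cite --- Lemma~\ref{lem:cut-HD-no-effect} (safe cuts) and the observation that no holomorphic curves have image in the non-identity region of the Heegaard diagram --- and you correctly identify the induced algebra inclusion with $\Id\otimes\inobc$ in the $R_+$-cup case and $\Id\otimes\ibc$ in the $R_-$-cup case, and note that the cap statements follow by interchanging the two $D$-actions. One small imprecision: a safe cut (cutting along an arc $\eta$ from sutured to bordered boundary, disjoint from $\alphas$ and $\betas$) does not disconnect the Heegaard surface, so the parenthetical appeal to Lemma~\ref{lem:disconnected} to discard a trivial disk component is not needed, and the safe-cut arc here starts on the suture component $\mathbf{z}^a$ adjacent to the new bordered arc $Z_0$ and ends on the original bordered boundary, exactly as in Figures~\ref{fig:cupInRminus} and~\ref{fig:cupInRplus}.
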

\begin{proof}
  This is the same as the proofs of Propositions~\ref{prop:R-minus-one-handle} and~\ref{prop:R-plus-one-handle}, and is left to the reader.
\end{proof}

\subsubsection{Capping off pointless arcs}\label{sec:pointless}
Suppose that $\PMC=(\{Z_j\},\mathbf{a},M)$ is an arc diagram so that
$Z_0\cap \mathbf{a}=\emptyset$ and that
$\PMC'=(\{Z_j\mid j\neq 0\},\mathbf{a},M)$. That is, $\PMC'$ is
obtained from $\PMC$ by deleting a pointless bordered arc. Then there
is a canonical isomorphism $\Alg(\PMC)\cong\Alg(\PMC')$.

\begin{proposition}\label{prop:pointless}
  If $\mathcal{Y}$ is a bordered-sutured cobordism from $\PMC$ to $\PMC'$
  corresponding to capping off a pointless bordered arc then
  $\BSDD(\mathcal{Y})\cong \BSDD(\Id_{\PMC})$ as a type \DD\ structure over
  $\Alg(\PMC)$ and $\Alg(-\PMC')\cong\Alg(-\PMC)$.
\end{proposition}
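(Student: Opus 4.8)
The plan is to read the statement off the Heegaard diagram of Figure~\ref{fig:pointless}, using the disconnection principle. First, since the pointless arc $Z_0$ contains no point of $\mathbf{a}$, no chord or strand diagram can have support on $Z_0$; hence there are canonical identifications $\Alg(\PMC)\cong\Alg(\PMC')$ and $\Alg(-\PMC)\cong\Alg(-\PMC')$ (the ground rings are literally equal, and the strand-diagram bases coincide). It therefore makes sense to compare the two type \DD\ bimodules over these algebras, and to speak of $\BSDD(\Id_{\PMC})$, $\BSDD(\mathcal{Y})$ as living over the same pair of algebras via these identifications.

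Next, recall from Figure~\ref{fig:pointless} that the standard bordered-sutured Heegaard diagram $\HD$ for $\mathcal{Y}$ is a disjoint union $\HD=\HD_0\amalg\HD'$, where $\HD'$ is the standard Heegaard diagram for the identity cobordism of $\PMC'$ and $\HD_0$ is the small capping component: topologically a disk carrying no $\alpha$- or $\beta$-curves, with the arc of $\PMC$ corresponding to $Z_0$ on its (thin) bordered boundary and a single suture arc on its (thick) sutured boundary. (Should a particular model present $\HD_0$ as joined to $\HD'$ along a thin strip rather than being literally disjoint, first apply a safe cut, Lemma~\ref{lem:cut-HD-no-effect}, to separate the two; this changes neither module.)

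The component $\HD_0$ has a single generator, the empty tuple of intersection points, and admits no positive domains, so $\BSDD(\HD_0)$ is the rank-one type \DD\ bimodule over the trivial ground ring $\Ftwo$ with vanishing $\delta^1$ — equivalently, it is the bordered-sutured invariant of a $3$-ball with one suture, which is $\Ftwo$. Hence, by Lemma~\ref{lem:disconnected},
\[
  \BSDD(\mathcal{Y})=\BSDD(\HD_0)\otimes_{\Ftwo}\BSDD(\HD')\cong\BSDD(\Id_{\PMC'})
\]
as type \DD\ bimodules over $\Alg(\PMC')$ and $\Alg(-\PMC')$. Running the identical argument for the identity cobordism of $\PMC$ — whose standard Heegaard diagram is likewise the disjoint union of the standard diagram for $\Id_{\PMC'}$ with a disk component carrying no $\alpha$- or $\beta$-curves — gives $\BSDD(\Id_{\PMC})\cong\BSDD(\Id_{\PMC'})$ under the same canonical identifications of the algebras. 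Composing the two isomorphisms produces the asserted $\BSDD(\mathcal{Y})\cong\BSDD(\Id_{\PMC})$.

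There is no real obstacle here; the only points demanding care are (i) confirming that the capping component contributes exactly the trivial rank-one bimodule — i.e., that it carries a single generator with no differential and that no holomorphic curves have image in the non-identity part of $\HD$ — and (ii) tracking the canonical identifications $\Alg(\PMC)\cong\Alg(\PMC')$ and $\Alg(-\PMC)\cong\Alg(-\PMC')$ carefully, so that ``the same type \DD\ structure'' is a genuine statement rather than merely an abstract isomorphism. Both are routine given Figure~\ref{fig:pointless}.
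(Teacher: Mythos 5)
Your proof is correct and takes essentially the same approach as the paper: the paper's entire proof is that this is ``immediate from Lemma~\ref{lem:disconnected},'' which is exactly the disconnection principle you invoke. You supply the details the paper leaves implicit — the canonical identification $\Alg(\PMC)\cong\Alg(\PMC')$ coming from $Z_0\cap\mathbf{a}=\emptyset$, the observation that the capping component contributes the trivial rank-one bimodule over $\Ftwo$, and the parallel disconnection for $\Id_{\PMC}$ itself — all of which are part of what makes the paper's one-line proof go through.
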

\begin{proof}
  This is immediate from Lemma~\ref{lem:disconnected}.
\end{proof}

\subsubsection{Putting it all together}
\begin{theorem}\label{thm:compute-borsut}
  For any bordered-sutured manifold $\mathcal{Y}$, the bordered-sutured modules
  $\BSD(\mathcal{Y})$ and $\BSA(\mathcal{Y})$ are algorithmically computable.
\end{theorem}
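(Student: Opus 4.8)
The plan is to assemble the ingredients already assembled above --- the decomposition of an arbitrary bordered-sutured manifold into the seven basic pieces (Lemma~\ref{lem:decompose-bs}), the explicit formulas for the invariants of those pieces, and the bordered-sutured pairing theorem (Theorem~\ref{thm:bordred-sut-pairing}) --- into a procedure that directly generalizes the factoring algorithm of~\cite{LOT4}. The input is a bordered-sutured Heegaard diagram for $\mathcal{Y}$; the output is the homotopy type of $\BSD(\mathcal{Y})$ (respectively $\BSA(\mathcal{Y})$) as a finitely generated type $D$ structure (respectively $A_\infty$-module) over the combinatorially defined algebra $\Alg(\PMC)$.

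First I would use Lemma~\ref{lem:disk-decomp} to attach $3$-dimensional $1$-handles along $\Gamma$ and reduce to the case of connected boundary, and then invoke Lemma~\ref{lem:decompose-bs} to write $\mathcal{Y}$, up to disk decomposition, as a composition of an honest bordered $3$-manifold $(Y,\psi)$ parametrized by a pointed matched circle together with finitely many cylinder-type pieces, each of which is an arcslide mapping cylinder or one of the six remaining basic pieces. This decomposition can be carried out algorithmically from the Heegaard diagram: the bordered part is~\cite[Section 8]{LOT4}, and the remaining steps --- choosing a Morse function on the sutured part of the cylindrical boundary and factoring its gradient flow into the elementary moves --- are combinatorial. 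Next, each summand has a manifestly computable invariant: $\CFDa(Y,\psi)$ is algorithmically computable by~\cite{LOT4}, and Propositions~\ref{prop:DD-id-is},~\ref{prop:bs-arcslide},~\ref{prop:interior-handle},~\ref{prop:R-minus-one-handle},~\ref{prop:R-plus-one-handle},~\ref{prop:cup-in-R}, and~\ref{prop:pointless} describe the type \DD\ bimodule of each cylinder-type piece completely in terms of finite combinatorial data (sets of (near-)complementary idempotents and sums of (near-)chords). Corollary~\ref{cor:compute-DA} then converts each such $\BSDD$ into the type \DA\ form $\BSDA$ as the homology of a morphism complex between two finite type \DD\ bimodules, a finite linear-algebra computation.

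Then, by Theorem~\ref{thm:bordred-sut-pairing},
\[
  \lsup{\Alg(\PMC)}\BSD(\mathcal{Y})\;\simeq\;\CFDa(Y,\psi)\DT\BSDA(\mathcal{Y}_1)\DT\BSDA(\mathcal{Y}_{1,2})\DT\BSDA(\mathcal{Y}_2)\DT\cdots,
\]
and I would evaluate this iterated box tensor product one factor at a time from the left, reducing after each step. For $\BSA(\mathcal{Y})$ one runs the entirely analogous procedure with the type $A$ end, or equivalently uses Proposition~\ref{prop:compute-AA-id} to produce $\BSAA(\Id_\PMC)$ from $\BSDD(\Id_\PMC)$ and forms a box tensor product of $\BSAA(\Id_\PMC)$ with $\BSD(\mathcal{Y})$; dualizing via Equation~\eqref{eq:BSD-dual} also expresses $\BSD(m(\mathcal{Y}))$ in terms of $\BSD(\mathcal{Y})$.

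The step I expect to be the main obstacle is the finiteness/boundedness bookkeeping for the box tensor products: one must verify that each basic cylinder piece has a \DA\ bimodule with only finitely many nonvanishing structure maps (is \emph{bounded}), so that --- although the partially formed type $D$ structure at an intermediate stage need not be bounded --- each individual $\DT$ with the next (bounded) factor is a genuine finite sum. This is the analogue of the most delicate point in~\cite{LOT4}, and it follows from the explicit descriptions in Propositions~\ref{prop:bs-arcslide}--\ref{prop:pointless} together with the corresponding boundedness statements for pointed matched circles. Once this is in place, the algorithm terminates and produces $\BSD(\mathcal{Y})$ and $\BSA(\mathcal{Y})$ in finitely many steps.
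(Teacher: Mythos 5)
Your proposal follows essentially the same route as the paper's proof by factoring: reduce to connected boundary, decompose via Lemma~\ref{lem:decompose-bs}, compute each basic piece's type \DD\ bimodule via Propositions~\ref{prop:bs-arcslide}--\ref{prop:pointless}, convert to type \DA\ form with Corollary~\ref{cor:compute-DA} (and to \AAm\ form with Proposition~\ref{prop:compute-AA-id}), and reassemble with iterated box tensor products. One small slip: in your displayed formula $\CFDa(Y,\psi)$ should sit at the right end of the tensor product rather than the left (a type $D$ structure is the rightmost factor in a $\DT$-chain, matching the paper's $\BSDA(\mathcal{Y}_n)\DT\cdots\DT\BSD(\mathcal{Y}_1)$); and the boundedness bookkeeping you correctly flag as the delicate point is the same issue already present in~\cite{LOT4} and is handled there in the same way, by admissibility of the standard Heegaard diagrams for the basic pieces.
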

\begin{proof}[Proof by factoring]
  By Lemma~\ref{lem:decompose-bs}, we can decompose $\mathcal{Y}$ as a sequence
  of bordered-sutured cobordisms $\mathcal{Y}_n\circ\dots\circ \mathcal{Y}_1$ where each
  $\mathcal{Y}_i$ is one of the seven basic bordered-sutured pieces. The type
  \DD\ bimodule $\BSDD(\mathcal{Y}_i)$ is computed in
  Proposition~\ref{prop:bs-arcslide},~\ref{prop:interior-handle},~\ref{prop:R-minus-one-handle},~\ref{prop:R-plus-one-handle},~\ref{prop:cup-in-R},
  or~\ref{prop:pointless}. Corollary~\ref{cor:compute-DA} then
  computes $\BSDA(\mathcal{Y}_i)$, and Proposition~\ref{prop:compute-AA-id}
  computes $\BSAA(\mathcal{Y}_n)$, by tensoring $\BSDD(\mathcal{Y}_n)$ with a copy of $\BSAA(\Id)$ on each side. Then, we have
  \begin{align*}
    \BSD(\mathcal{Y})&\simeq \BSDA(\mathcal{Y}_n)\DT\BSDA(\mathcal{Y}_{n-1})\DT\cdots\DT\BSDA(\mathcal{Y}_2)\DT\BSD(\mathcal{Y}_1)\\
    \BSA(\mathcal{Y})&\simeq \BSAA(\mathcal{Y}_n)\DT\BSDA(\mathcal{Y}_{n-1})\DT\cdots\DT\BSDA(\mathcal{Y}_2)\DT\BSD(\mathcal{Y}_1).\qedhere
  \end{align*}
\end{proof}
We note that a less computationally useful proof of
Theorem~\ref{thm:compute-borsut}, via nice diagrams, already appears in Zarev's
work:
\begin{proof}[Alternative proof via nice diagrams]
  Any bordered-sutured Heegaard diagram can be made \emph{nice} by a
  sequence of Heegaard moves~\cite[Proposition 4.17]{Zarev09:BorSut},
  and the computation of $\BSD$ from a nice diagram is
  algorithmic~\cite[Theorem 7.14]{Zarev09:BorSut}.
\end{proof}

As an aside, the first proof of
Theorem~\ref{thm:compute-borsut} also gives an algorithm for computing
sutured Floer homology which is independent from, and presumably
faster than, the ``nice diagram'' approach~\cite[Theorems 6.4 and 7.4]{Juhasz08:SuturedDecomp}.

\subsubsection{The twisting bimodule}
\begin{figure}
  \centering
  \includegraphics{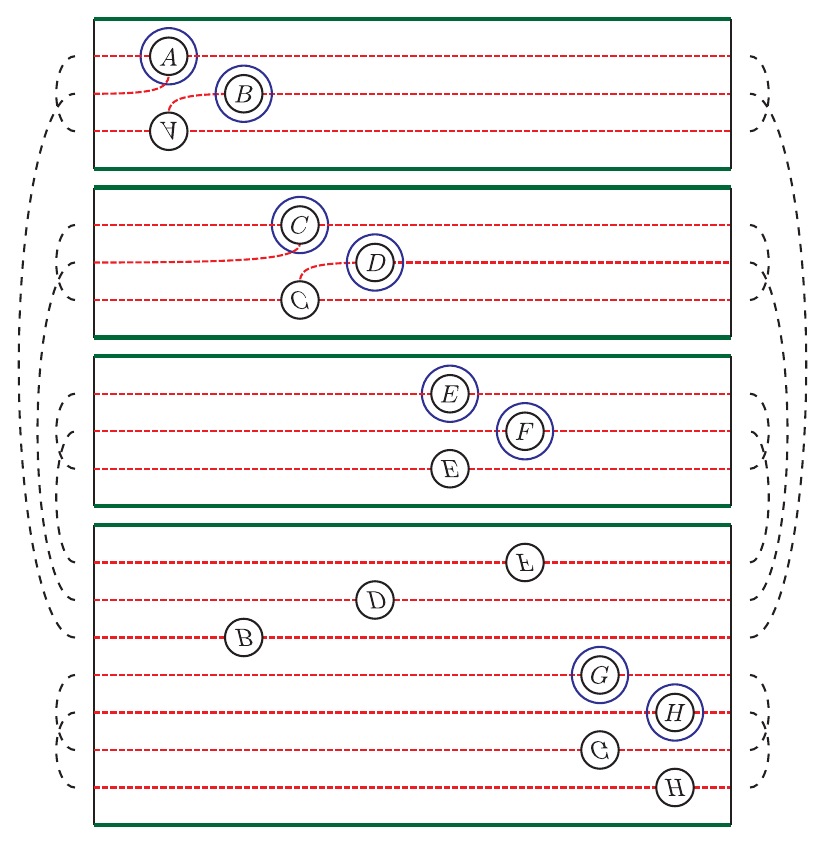}
  \caption{\textbf{Bordered-sutured Heegaard diagram for $\btau$.} The case shown has $\bdy Y$ of genus $1$ and $\ell=2$. Note that this diagram is a destabilization of the composition of the standard Heegaard diagrams for two arcslides.}
  \label{fig:btau-HD}
\end{figure}
Theorem~\ref{thm:compute-borsut} gives a way of computing the bimodule $\btau$, resolving difficulty~\ref{item:comp-1.5}. Explicitly, if the boundary of $Y$ has genus $k$ and $T$ consists of $\ell$ arcs and some number of circles then, for an appropriate choice of arc diagram we can factor $\btau$ as a product of $\ell$ arcslides; see Figure~\ref{fig:btau-HD}. So, the bimodule for $\btau$ is straightforward to compute from Proposition~\ref{prop:bs-arcslide} and Corollary~\ref{cor:compute-DA}.

\subsection{Evading the Novikov ring}
Next we turn to difficulty~\ref{item:comp-2}. The idea is to replace the Novikov field $\Lambda$ with the field of rational functions $\Ftwo(x_1,\dots,x_{4k-1})$, over which computing the homology of chain complexes is clearly algorithmic, and $\lsup{\Alg(\PMC)}\bLambda(\PMC)_{\Alg(\PMC)}$ by a type \DA\ bimodule $\lsup{\Alg(\PMC)}\bFrac(\PMC)_{\Alg(\PMC)}$. Before definition $\bFrac$, note that there is an embedding $\nu\co \ZZ^{4k-1}\into \Ftwo(x_1,\dots,x_{4k-1})^\times$ given by
\[
  \nu(a_1,\dots,a_{4k-1})=x_1^{a_1}\cdots x_{4k-1}^{a_{4k-1}}.
\]

\begin{definition}
  As an
  $\Ftwo(x_1,\dots,x_{4k-1})\otimes_{\Ftwo}\Idem(\PMC)$-module, define
  \[
    \bFrac(\PMC)=\Ftwo(x_1,\dots,x_{4k-1})\otimes_{\Ftwo}\Idem(\PMC).
  \]
  The structure maps
  $\delta^1_n$ on $\bFrac(\PMC)$ are defined to vanish if $n\neq 2$, and
  $\delta^1_2\co \bFrac(\PMC)\otimes \Alg(\PMC)\to \Alg(\PMC)\otimes
  \bFrac(\PMC)$ is a homomorphism of $\Ftwo(x_1,\dots,x_{4k-1})$-vector spaces. So, it
  only remains to define $\delta^1(i\otimes a)$ for $i$ a basic idempotent
  (viewed as a generator of $\bFrac(\PMC)$) and $a$ a
  strand diagram. Define
  \[
    \delta^1_2(i\otimes a)=
    \begin{cases}
      a\otimes \nu([a])j& \text{if }ia\neq 0,\text{ where }aj\neq 0\\
      0 & \text{otherwise}.
    \end{cases}
  \]
\end{definition}

\begin{theorem}
  Theorems~\ref{thm:detect-incompress} and~\ref{thm:detect-tang} hold with $\lsup{\Alg(\PMC)}\bLambda(\PMC)_{\Alg(\PMC)}$ replaced by $\lsup{\Alg(\PMC)}\bFrac(\PMC)_{\Alg(\PMC)}$.
\end{theorem}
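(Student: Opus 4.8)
The plan is to deduce both statements from Theorem~\ref{thm:detect-incompress} (and Theorem~\ref{thm:detect-incompress-2-bdy}) by a flat base-change argument, so that none of the Novikov-coefficient analysis has to be redone. The key observation is that $\bFrac(\PMC)$ is literally a ``reduction'' of $\bLambda(\PMC)$: since $\lambda_1,\dots,\lambda_{4k-1}$ are linearly independent over $\QQ$, the assignment $\alpha\mapsto\sum_i\lambda_i\alpha_i$ is injective on $\ZZ^{4k-1}$, so $x_i\mapsto T^{\lambda_i}$ defines an \emph{injective} ring homomorphism $\Ftwo[x_1^{\pm1},\dots,x_{4k-1}^{\pm1}]\hookrightarrow\Lambda$, which (as $\Lambda$ is a field) extends to a field extension $j\co\Ftwo(x_1,\dots,x_{4k-1})\hookrightarrow\Lambda$. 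For a strand diagram $a$ with support $[a]\in\ZZ^{4k-1}$ one has $j(\nu([a]))=T^{\psi([a])}$, so comparing the two definitions of the structure map $\delta^1_2$ gives directly an isomorphism of type \DA\ bimodules
\[
  \lsup{\Alg(\PMC)}\bLambda(\PMC)_{\Alg(\PMC)}\cong\lsup{\Alg(\PMC)}\bFrac(\PMC)_{\Alg(\PMC)}\otimes_{\Ftwo(x_1,\dots,x_{4k-1})}\Lambda;
\]
that is, $\bLambda(\PMC)$ is obtained from $\bFrac(\PMC)$ by extending the coefficient field along $j$.

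Next I would propagate this through $\DT$ and through the morphism complex. Since $\DT$ is given by finitely many $\Ftwo$-linear structure maps whose coefficients are the $\nu([a])$ in the $\bFrac$ case and their images $T^{\psi([a])}=j(\nu([a]))$ in the $\bLambda$ case, one gets
\[
  \lsup{\Alg(\PMC)}\bLambda(\PMC)_{\Alg(\PMC)}\DT\lsup{\Alg(\PMC)}\CFDa(Y,\phi)\cong\bigl(\lsup{\Alg(\PMC)}\bFrac(\PMC)_{\Alg(\PMC)}\DT\lsup{\Alg(\PMC)}\CFDa(Y,\phi)\bigr)\otimes_{\Ftwo(x_1,\dots,x_{4k-1})}\Lambda.
\]
Because $\CFDa(Y,\phi)$ is finitely generated over $\Ftwo$ (it comes from a finite Heegaard diagram), the morphism complex out of it commutes with this scalar extension, so the complex whose homology appears in~\eqref{eq:detect-incompres} for $\bLambda(\PMC)$ is obtained from the corresponding complex for $\bFrac(\PMC)$ by $-\otimes_{\Ftwo(x_1,\dots,x_{4k-1})}\Lambda$. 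As $\Lambda$ is an extension field of $\Ftwo(x_1,\dots,x_{4k-1})$, this tensor is exact and a complex of $\Ftwo(x_1,\dots,x_{4k-1})$-vector spaces is acyclic if and only if its extension to $\Lambda$ is; hence the homology vanishes over $\Lambda$ exactly when it vanishes over $\Ftwo(x_1,\dots,x_{4k-1})$. Thus~\eqref{eq:detect-incompres} holds with $\bFrac(\PMC)$ in place of $\bLambda(\PMC)$ precisely when it holds with $\bLambda(\PMC)$, and Theorem~\ref{thm:detect-incompress} finishes this case; the identical argument, starting from Theorem~\ref{thm:detect-incompress-2-bdy}, handles the multiple-boundary version.

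For Theorem~\ref{thm:detect-tang} there is nothing further to do: by Corollary~\ref{cor:doubling} the relevant morphism complexes are already identified with $\SFC(D(Y,T))$ and $\SFC(D_i(Y,T))$ over $\Ftwo$, with no Novikov ring appearing, and these are algorithmically computable by Theorem~\ref{thm:compute-borsut}; should one prefer a doubled-manifold phrasing parallel to Theorem~\ref{thm:detect-incompress}, the same base-change argument applies verbatim. The only genuine work is the bookkeeping showing that $\DT$ and $\Mor$ commute with scalar extension along $j$; this is routine but rests on the finiteness of the generating sets of $\CFDa(Y,\phi)$ (and of the $\BSD$ modules), and I expect it to be the one point requiring care.
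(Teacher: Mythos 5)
Your argument is correct and is essentially the paper's own proof, only spelled out in more detail: the paper likewise records the injection $\Ftwo(x_1,\dots,x_{4k-1})\hookrightarrow\Lambda$ induced by $\psi$, the isomorphism $\bLambda(\PMC)\cong\bFrac(\PMC)\otimes_{\Ftwo(x_1,\dots,x_{4k-1})}\Lambda$, and then invokes exactness of tensor product over a field, leaving implicit the propagation through $\DT$ and $\Mor$ that you spell out (which is indeed harmless thanks to the finite generation of $\CFDa$). Your side remark that Theorem~\ref{thm:detect-tang} has no $\bLambda$ in its statement, so the replacement is vacuous there, is also a fair reading of the paper.
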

\begin{proof}
  The map $\psi$ induces an injection $\Ftwo(x_1,\dots,x_{4k-1})\into \Lambda$, and
  \[
    \lsup{\Alg(\PMC)}\bLambda(\PMC)_{\Alg(\PMC)}\cong \lsup{\Alg(\PMC)}\bFrac(\PMC)_{\Alg(\PMC)}\otimes_{\Ftwo(x_1,\dots,x_{4k-1})}\Lambda.
  \]
  So, the result follows from the fact that, over a field, tensor
  product is an exact functor.
\end{proof}

A somewhat improved version of the bordered algorithm for computing $\CFDa(Y)$ has been implemented by Zhan~\cite{Zhan:bfh}. Although we have not done so, it should be relatively straightforward to extend his code to compute bordered-sutured Floer homology, check incompressibility, and check boundary parallelness.

\bibliographystyle{hamsalpha}
\bibliography{heegaardfloer}
\end{document}